\newcommand{\RN}[1]{%
	\textup{\uppercase\expandafter{\romannumeral#1}}%
}
\def\wh{\widehat}
\def\Re{ \mathrm{Re}}
\def\C{\mathbb{C}}
\def\R{\mathbb{R}}
\newcommand{\erfc}{\operatorname{erfc}}
\newcommand{\erf}{\operatorname{erf}}
\theoremstyle{plain}
\newtheorem*{thm*}{Theorem}
\newtheorem{thm}{Theorem}[section]
\newtheorem{lem}[thm]{Lemma}
\newtheorem{cor}[thm]{Corollary}
\newtheorem{prop}[thm]{Proposition}
\newtheorem*{prop*}{Proposition}
\newtheorem*{lem*}{Lemma}
\theoremstyle{definition}
\newtheorem*{eg*}{Example}
\newtheorem*{egs*}{Examples}
\newtheorem*{def*}{Definition}
\newtheorem*{Q*}{Question}
\theoremstyle{remark}
\newtheorem{rmk}[thm]{Remark}
\newtheorem*{rmk*}{Remark}
\newtheorem*{rmks*}{Remarks}
\numberwithin{equation}{section}
\begin{document}
\title[Elliptic random matrices]{Real eigenvalues of elliptic random matrices
}

\author{Sung-soo Byun}
\address{Department of Mathematical Sciences, Seoul National University, Seoul, 08826, Republic of Korea}
\email{sungsoobyun@snu.ac.kr}

\author{Nam-Gyu Kang}
\address{School of Mathematics, Korea Institute for Advanced Study, Seoul, 02455, Republic of Korea }
\email{namgyu@kias.re.kr}

\author{Ji Oon Lee}
\address{Department of Mathematical Sciences, Korea Advanced Institute of Science and Technology, Daejeon, 34141, and School of Mathematics, Korea Institute for Advanced Study, Seoul, 02455, Republic of Korea}
\email{jioon.lee@kaist.edu}

\author{Jinyeop Lee}
\address{Mathematics Institute, Ludwig-Maximilian University of Munich, Theresienstrasse 39, Munich, 80333, Germany}
\email{lee@math.lmu.de}



\begin{abstract}
We consider the real eigenvalues of an $(N \times N)$ real elliptic Ginibre matrix whose entries are correlated through a non-Hermiticity parameter $\tau_N\in [0,1]$. 
In the almost-Hermitian regime where $1-\tau_N=\Theta(N^{-1})$, we obtain the large-$N$ expansion of the mean and the variance of the number of the real eigenvalues. 
Furthermore, we derive the limiting empirical distributions of the real eigenvalues, which interpolate the Wigner semicircle law
and the uniform distribution, the restriction of the elliptic law on the real axis. 
Our proofs are based on the skew-orthogonal polynomial representation of the correlation kernel due to Forrester and Nagao. 
\end{abstract}

\thanks{
The authors are grateful for the support by the National Research Foundation of Korea (NRF-2019R1A5A1028324) and by Samsung Science and Technology Foundation (SSTF-BA1401-51). 
Sung-Soo Byun was partially supported by the German Research Foundation-National Research Foundation of Korea (IRTG 2235). Nam-Gyu Kang was partially supported by a KIAS Individual Grant (MG058103) at Korea Institute for Advanced Study.
Jinyeop Lee was partially supported by the National Research Foundation of Korea (NRF-2020R1F1A1A01070580). 
}

\keywords{Real elliptic Ginibre matrices, real eigenvalues, almost-Hermitian regime, skew-orthogonal polynomials}
\subjclass[2020]{Primary 60B20; 
	Secondary 33C45 
}

\maketitle


\section{Introduction}

How many eigenvalues of a real random matrix are real? At one extreme, there are real symmetric matrices whose all eigenvalues are real. Without symmetry, Hermiticity fails, and not all eigenvalues are real. Nevertheless, intriguingly, several non-Hermitian random matrices with real entries have real eigenvalues with non-zero probability, contrary to their complex or quaternion counterparts. One of the most fundamental examples of such matrices is a real Ginibre ensemble, a real ($N \times N$) matrix whose entries are i.i.d. Gaussian, for which it was shown in the pioneering work of Edelman, Kostlan, and Shub \cite{MR1231689} that the number of real eigenvalues is of order $\sqrt{N}$.

From the examples of real symmetric and real i.i.d. matrices, it can be heuristically conjectured that as a real random matrix becomes more symmetric, it gets more real eigenvalues. The statement can be made rigorous by considering the following interpolation of the two different random matrices:
\begin{equation}\label{X GOE antiGOE}
X := \sqrt{\frac{1+\tau_N}{2}}\, S + \sqrt{\frac{1-\tau_N}{2}} \, A
\end{equation}
for a (possibly $N$-dependent) parameter $\tau_N$, where $S$ and $A$ are elements of Gaussian orthogonal ensemble (GOE) and anti-symmetric GOE, respectively. Note that $X$ is symmetric if $\tau_N=1$ and $X$ is a Ginibre ensemble if $\tau_N=0$, and thus the (non)-Hermiticity of $X$ is expressed by $\tau_N$. The model is known as a real elliptic Ginibre matrix, which provides a natural bridge between Hermitian and non-Hermitian random matrix theories. The expected number of real eigenvalues $E_{N,\tau_N}$ of $X$ for $\tau_N\equiv \tau \in [0, 1)$ was obtained by Forrester and Nagao \cite{MR2430570}, which is given by
\begin{equation}\label{EN tau<1}
	E_{N,\tau}= \Big(\frac{2}{\pi} \frac{1+\tau}{1-\tau} N \Big)^{\frac12} (1+o(1)).
\end{equation}
It in particular shows that $E_{N,\tau}$ is an increasing function of $\tau$ for any sufficiently large $N$.

The symmetry affects not only the number of real eigenvalues but also its distribution. While the limiting empirical distribution of real eigenvalues of $X$ is the uniform distribution on $(-1-\tau, 1+\tau)$ for $\tau \in [0, 1)$, i.e.,
\begin{equation} \label{rhos}
\rho_\tau^s(x):=\mathbbm{1}_{ [-1-\tau,1+\tau] }(x) \cdot \frac{1}{2(1+\tau)},
\end{equation}
in the limit $\tau \to 1$, the uniform distribution in \eqref{rhos} does not recover the semicircle distribution, which is obviously the limiting distribution in case $\tau=1$. It suggests that a non-trivial transition happens when $\tau$ is close to $1$. 

One can expect that such a transition may appear when the number of the real eigenvalues is of order $N$, in which case thus the distribution of real eigenvalues is affected by the semicircle distribution. It is therefore natural to consider the case for which the right side of \eqref{EN tau<1} is of order $N$, or equivalently $1-\tau_N = \Theta(N^{-1})$, which we will call the almost-Hermitian regime. We consider the following questions in this regime:

\begin{itemize}
 \item What is the number of real eigenvalues?
 \item What is the limiting empirical distribution of the real eigenvalues?
\end{itemize}

In this paper, we aim to answer these questions.

\subsection{Main contributions}

Denote by $\mathcal{N}_{\tau_N}$ the number of real eigenvalues of $X$ in \eqref{X GOE antiGOE}. Our main contributions are as follows: For $N$ even and 
\begin{equation} \label{tau wH}
	\tau_N=1-\tfrac{\alpha^2}{N}, \qquad (\alpha>0),
\end{equation}
we prove
\begin{itemize}
 \item asymptotic formulas for the mean and the variance of $\mathcal{N}_{\tau_N}$, and
 \item a formula for the limiting empirical distribution of real eigenvalues of $X$.
\end{itemize}
As a corollary, we also prove the convergence of $\mathcal{N}_{\tau_N}/N$ in probability.

In Theorem \ref{Thm_EN}, we prove that
\begin{equation}
 E_{N,\tau_N} = c(\alpha) N + O(1)
\end{equation}
with
\begin{equation} \label{c alternative}
	c(\alpha)=\frac{2}{\alpha \sqrt{\pi}} \int_{0}^{1} \erf( \alpha \sqrt{1-s^2} ) \,ds, 
\end{equation}
where $\erf$ is the error function. From the asymptotic behavior of the error function,
\begin{equation} \label{eq:asymp_erf}
\erf(x) \sim 
\begin{cases}
	\frac{2x}{\sqrt{\pi} } &\text{as} \quad x \to 0\,,
	\\
	1 & \text{as} \quad x \to \infty\,,
\end{cases}
\end{equation}
we find that $c(\alpha) \sim 1$ as $\alpha \to 0$ and $c(\alpha) \sim \frac{2}{\alpha \sqrt{\pi}}$ as $\alpha \to \infty$. The former limit corresponds to the Hermitian case, and the latter matches \eqref{EN tau<1} since
\[
 \frac{1}{N} \Big(\frac{2}{\pi} \cdot \frac{1+\tau_N}{1-\tau_N} N \Big)^{\frac12} = \frac{1}{N} \Big(\frac{2}{\pi} \cdot \frac{2-(\alpha^2/N)}{(\alpha^2/N)} N \Big)^{\frac12} \to \frac{2}{\alpha \sqrt{\pi}}
\]
as $\alpha \to \infty$. It in particular shows that the regime $1-\tau_N = \Theta(N^{-1})$ is indeed where the transition for the real eigenvalues happens and our result connects the Hermitian case ($\tau=1$) and the elliptic case ($\tau \in [0, 1)$).

For the variance $V_{N,\tau_N}$ of $\mathcal{N}_{\tau_N}$, in our second main result, Theorem~\ref{Thm_Variance}, we prove that
\begin{equation}
	\lim_{ N \to \infty }\frac{V_{N,\tau_N}}{E_{N,\tau_N} } =r(\alpha)
\end{equation}
for some $r(\alpha)$ satisfying
\begin{equation} \label{r limits}
	\lim_{\alpha\to 0} r(\alpha) = 0, \qquad \lim_{\alpha\to \infty} r(\alpha) = 2-\sqrt{2}.
\end{equation}
Again, our result on the variance connects the Hermitian case and the elliptic case as for a fixed $\tau_N \equiv \tau \in [0, 1)$ it was obtained in \cite{forrester2007eigenvalue,MR2430570} that
\begin{equation}\label{VNEN tau fixed}
\lim_{ N \to \infty } \frac{V_{N,\tau}} { E_{N,\tau} } = 2-\sqrt{2}.
\end{equation}

As the mean and the variance are of the same order, it is immediate to obtain the convergence
\[
 \frac{ \mathcal{N}_{\tau_N} }{N} \to c(\alpha)
\]
as $N \to \infty$, in probability.

In Theorem \ref{Thm_density}, we derive the limiting empirical distribution of the real eigenvalues. Denote by $\rho_{N,\tau_N}(x)$ the empirical distribution of the real eigenvalues of $X$. Then as $N\to\infty$, 
\begin{equation}
	\rho_{N,\tau_N}(x) \to \rho_\alpha^w(x):=\mathbbm{1}_{ [-2,2] }(x) \cdot \frac{1}{c(\alpha)} \frac{1}{2\alpha \sqrt{\pi}} \erf( \tfrac{\alpha}{2} \sqrt{4-x^2} )
\end{equation}
uniformly on compact subsets of $(-2,2)$. Here $c(\alpha)$ is a normalization constant in \eqref{c alternative}, which turns $\rho_\alpha^w$ into a probability density function. Applying the asymptotic formula \eqref{eq:asymp_erf}, we find that the density $\rho_\alpha^w$ interpolates the semicircle law
\begin{equation} \label{rhosc}
	\rho_{sc}(x):= \mathbbm{1}_{ [-2,2] }(x) \cdot \frac{1}{2\pi} \sqrt{4-x^2}
\end{equation} 
and the uniform distribution $\rho_1^s$ defined in \eqref{rhos}, see Figure~\ref{Fig_GauRealEigenValueHistogram}. 

Our proof for the number of the real eigenvalues is based on a (double) contour integral representation, which follows from the skew-orthogonal polynomial kernel of the associated Pfaffian point process and some basic properties of hypergeometric functions. 

For the density of the real eigenvalues, we use a version of the Christoffel--Darboux identity (Lemma~\ref{Lem_Hermite}) for Hermite polynomials with complex variables. We also estimate certain oscillatory integrals, which naturally arise from the Plancherel--Rotach strong asymptotics. 

In the elliptic case $\tau_N \equiv \tau \in [0,1)$, the classical Mehler's formula (also known as the Poisson kernel) for orthogonal polynomials can be applied to derive the large-$N$ limit of the correlation kernel. In contrast, the leading mathematical challenge in the proof of the almost-Hermitian case is that it should find seemingly non-trivial analytic expressions of the discrete objects under consideration. For this, we can perform the asymptotic analysis, borrowing some ideas from the theory of special functions.

\subsection{Related works}

The Ginibre ensemble was first introduced in \cite{ginibre1965statistical}. The limiting empirical spectral distribution of a Ginibre ensemble is the uniform distribution on the unit disc in the complex plane, known as the circular law. For a general (fixed) $\tau_N \equiv \tau \in [0, 1)$, the circular law is extended to the elliptic law \cite{girko1986elliptic,MR948613}, the uniform distribution on the elliptic disc
\begin{equation} \label{Ellipse}
K:=\{ x+iy: ( \tfrac{x}{1+\tau} )^2+( \tfrac{y}{1-\tau} )^2 \le 1 \},
\end{equation}
see Figure~\ref{Fig_REG}.

The formula \eqref{EN tau<1} was first proved by Edelman, Kostlan, and Shub for a real Ginibre ensemble ($\tau=0$). More precisely, it was shown in \cite[Corollary 5.2]{MR1231689} that 
\begin{equation}\label{E_{N,0} asym}
	E_{N,0}=\Big( \frac{2}{\pi}N \Big)^{\frac12} \Big[ 1-\frac{3}{8N}-\frac{3}{128N^2}+\frac{27}{1024N^3}+\frac{499}{32768N^4}+O(\frac{1}{N^5}) \Big]+\frac12.
\end{equation}
For a fixed value of $\tau \equiv \tau_N \in [0,1)$, Forrester and Nagao \cite{MR2430570} expressed $E_{N,\tau}$ in terms of a summation of hypergeometric functions using the theory of skew-orthogonal polynomials previously developed in \cite{forrester2007eigenvalue} for the case $\tau_N \equiv 0$ from which one can derive \eqref{EN tau<1}. (See also \cite{MR2371225,MR2530159} for the scaling limits of real Ginibre matrices.)

\begin{figure}[!ht]
	\includegraphics[width=\textwidth]{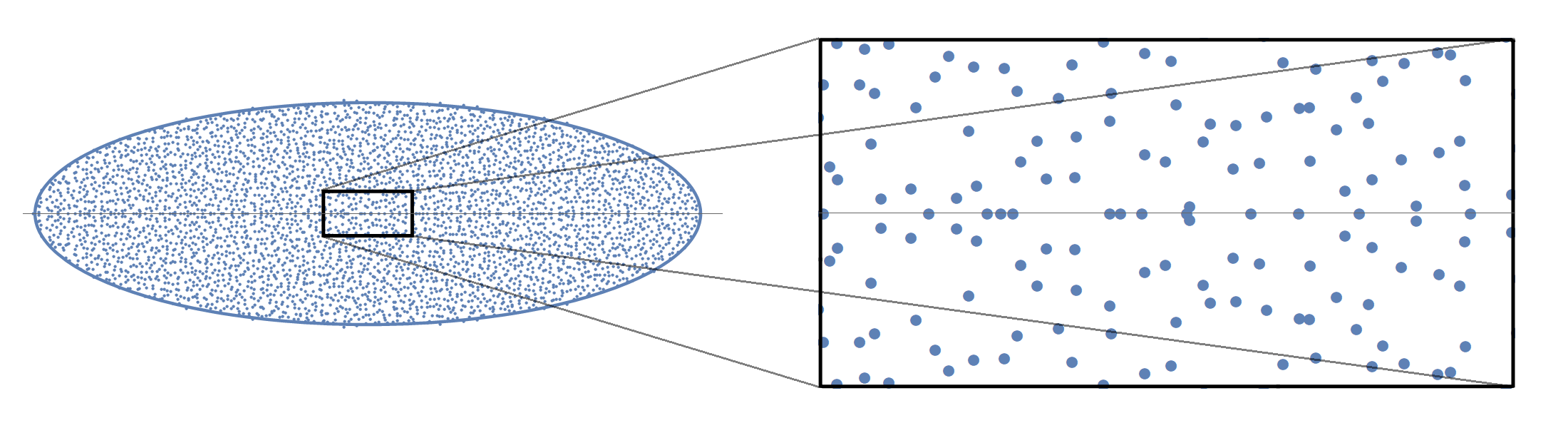}
	\caption{Eigenvalues of $X$, where $\tau_N\equiv 1/2$ and $N=4096$} \label{Fig_REG}
\end{figure}

For the real eigenvalues of other random matrix models, we refer to \cite{MR2185860,MR2430570,MR3685239,little2021number,forrester2020many,MR2788023} and references therein. We also remark that the statistics of real eigenvalues enjoy an intimate relationship with diverse topics, including the Zakharov--Shabat system \cite{MR4068316} and the annihilating Brownian motion \cite{MR3678474}. 

The almost-Hermitian regime was introduced in the series of works \cite{MR1634312,fyodorov1997almost,MR1431718} by Fyodorov, Khoruzhenko, and Sommers. For more details and some physical connotations on such intermediate regime, we refer to some recent works \cite{MR3845296,AB20,FT20} and references therein. The density $\rho_\alpha^w$ was previously found
by Efetov \cite{efetov1997directed} using the supersymmetry method in the context of directed quantum chaos.

In general, the odd $N$ case should be treated separately, see e.g., \cite{MR2485724,MR2341601}. Nevertheless, our assumption that $N$ be an even integer is merely for convenience to apply main results in \cite{MR2430570}. It is to be expected that such an assumption should not be necessary for our main results.

Our model can be generalized to real elliptic matrices with non-Gaussian entries. Even with non-Gaussian entries, the circular law and the elliptic law hold both at the global and local levels. It is known as the universality of the circular law \cite{MR2663633,MR2722794,MR3306005,MR3230002}, and the elliptic law \cite{MR3403996,alt2021local}, respectively. The universality of the leading order asymptotics of the expected number of real eigenvalues also holds for a class of i.i.d. random matrices (i.e., $\tau_N \equiv 0$); see \cite{MR3306005}. It is expected that a similar result also holds with general $\tau_N$ including the almost-Hermitian regime; see Figure~\ref{Fig_EN-over-N-alpha-one}. We will discuss these in a future paper.

	\begin{figure}[!ht]
			\begin{subfigure}[h]{0.49\textwidth}
			\includegraphics[width=\textwidth]{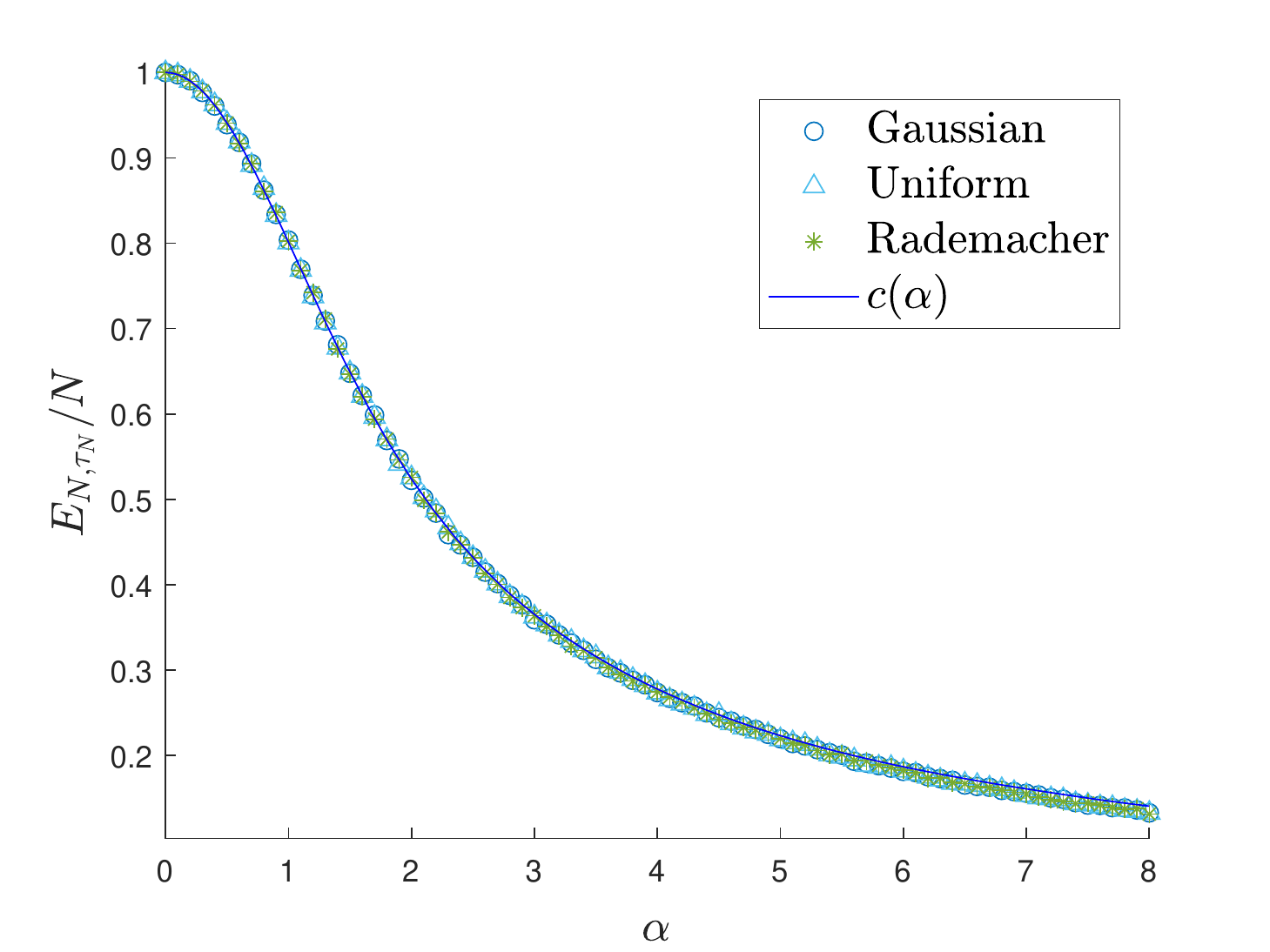} 
			\caption{$c(\alpha)$} 
		\end{subfigure} 
	\begin{subfigure}[h]{0.49\textwidth}
	\includegraphics[width=\textwidth]{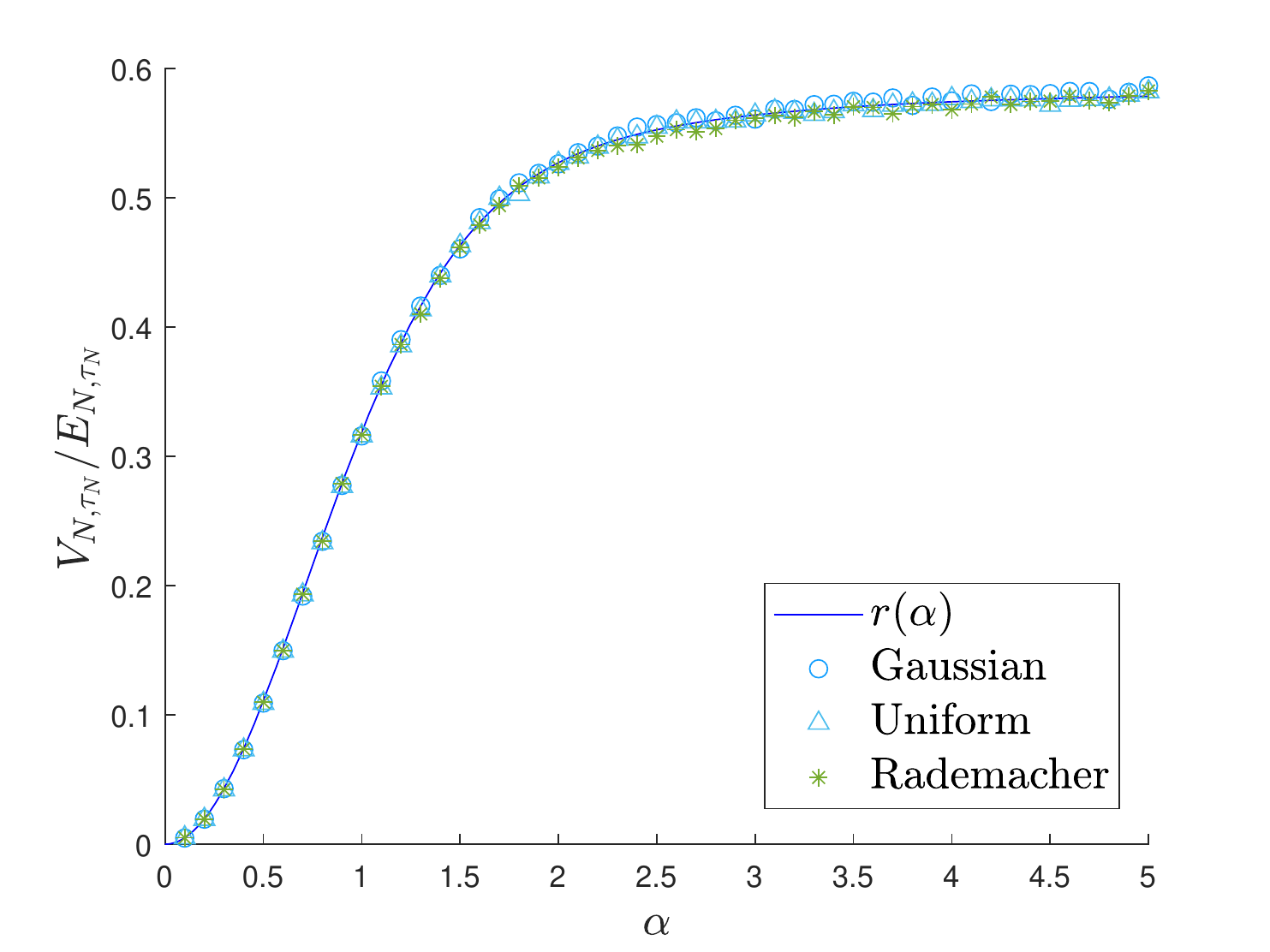}
		\caption{$r(\alpha)$} 
	\end{subfigure}	
		\caption{(A): The plot displays numbers of real eigenvalues divided by $N$ for $256$ samples of random matrices of size $N=256$ with various entries and their comparison with $c(\alpha)$. Here the entries of random matrices are given by Gaussian, uniform, and Rademacher distributions. (B): The graph indicates the variance of real eigenvalues divided by its number, for $10^5$ samples of random matrices of size $N=64$ with entries given by the identical distributions in (A) and their comparison with $r(\alpha)$. }
		\label{Fig_EN-over-N-alpha-one} \label{Fig_VNoverEN}
	\end{figure}

For the non-Hermitian Wishart ensembles (also known as chiral Ginibre ensembles or sample cross-covariance matrices), it was shown in \cite{MR2679835} that the associated skew-orthogonal polynomials can be expressed in terms of the generalized Laguerre polynomials. Using this, the correlation kernel of the associated Pfaffian point process was studied in \cite{MR2592354}. 
Based on this a priori knowledge, one may consider similar problems in this work for the non-Hermitian Wishart ensembles. 
In particular, one can expect that the limiting empirical distributions in the almost-Hermitian regime interpolate the Marchenko-Pastur law with the ``shifted elliptic law'' recently discovered in \cite{MR4229527}. 
We will address these problems in future work.

\subsection{Organization of the paper}

The rest of the paper is organized as follows.
In Section~\ref{Section_Main Results}, we introduce the precise definition of the model and state the main results. The main results on the real eigenvalues of the model in the almost-Hermitian regime are Theorems~\ref{Thm_EN}, ~\ref{Thm_Variance} and ~\ref{Thm_density}. 
We also present the counterpart of Theorem~\ref{Thm_EN} for the elliptic regime in Proposition~\ref{Thm_EN t fixed}. In Section~\ref{Section_expectation}, we discuss a useful integral representation of the expected number of the real eigenvalues and prove Theorem~\ref{Thm_EN} and Proposition~\ref{Thm_EN t fixed}. Section~\ref{Section_distributions} is devoted to carrying out some asymptotic analysis on the pre-kernel and to proving Theorem~\ref{Thm_density}. 
In Section~\ref{Section_Variance}, we prove Theorem~\ref{Thm_Variance}, based on a similar strategy as in Section~\ref{Section_expectation}, using some results from Section~\ref{Section_distributions} as well.

\section{Main Results}\label{Section_Main Results}

\subsection{Elliptic matrix}

In this subsection, we define the elliptic matrix, the primary model we consider in this work.
We consider a sequence $\tau_N \in [0,1]$ called a \emph{non-Hermiticity} parameter. In the sequel we write $\tau=\lim_{ N \to \infty } \tau_N$, which is always assumed to be well defined. 
Let $\{\xi_{j,N}\}_{N=1}^\infty$ ($j=0,1,2$) be sequences of real random variables, which satisfy 
\begin{equation} \label{xi j}
\mathbb{E} \xi_{j,N}=0, \qquad \mathbb{E} \xi_{j,N}^2=1, \qquad \mathbb{E} \xi_{j,N}^k< \infty, \qquad \mathbb{E} \xi_{1,N} \xi_{2,N} = \tau_N,
\end{equation}
where $k \in \mathbb{N}$. 
We consider a random matrix $X=(x_{jk})_{j,k=1}^N$ whose entries are given as follows:
\begin{itemize}
	\item $\{ (x_{jk},x_{kj}) \}_{j<k} \cup \{x_{jj}\}_{j} $ consists of independent random variables;
	\item $\{x_{jj}\}_{j}$ are independent copies of $\frac{1}{\sqrt{N}}\xi_{0,N}$;
	\item $\{ (x_{jk},x_{kj}) \}_{j<k}$ are independent copies of $\frac{1}{\sqrt{N}}(\xi_{1,N},\xi_{2,N})$.
\end{itemize}
We assume that $\xi_{j,N}$'s are Gaussian, which coincides with $X$ in \eqref{X GOE antiGOE}.

\subsection{Main Theorems}

Our first main result, Theorem~\ref{Thm_EN}, is on the asymptotic expansion of $E_{N,\tau_N}$ in the almost-Hermitian regime. Recall that the modified Bessel function $I_\nu$ of the first kind is given by 
\begin{equation}\label{I nu}
	I_\nu(z):=\sum_{k=0}^\infty \frac{(z/2)^{2k+\nu}}{ k! \, \Gamma(\nu+k+1) }.
\end{equation}

\begin{thm} \label{Thm_EN}
	Let $\tau_N=1-\tfrac{\alpha^2}{N}$. Then for any positive integer $m \ge 2$,
	\begin{equation}\label{EN expansion}
		E_{N,\tau_N}=N c(\alpha)+c_0(\alpha)+\frac12+\sum_{l=1}^{m-1} \frac{c_l(\alpha)}{N^l}+O( \frac{1}{N^m} )
	\end{equation}
as $N \to \infty$, where 
\begin{equation}
	\label{c(alpha)}
c(\alpha):= e^{-\alpha^2/2} [ I_0( \tfrac{\alpha^2}{2} )+ I_1( \tfrac{\alpha^2}{2} ) ],
\end{equation}
\begin{equation}
c_0(\alpha):=-\tfrac12 e^{-\alpha^2/2} [ I_0( \tfrac{\alpha^2}{2} )+\alpha^2 I_1( \tfrac{\alpha^2}{2} ) ],
\end{equation}
and for $l \ge 1$, 
\begin{equation}
	c_l(\alpha):=\sum_{k=0}^{l+1} \sum_{m=l+1-k}^\infty	q_{m,l+1-k} \, \frac{(-1)^{m-l+k}}{(m-l+k)!} \frac{(2k-3)!!}{(2k)!!} \frac{(2m-1)!! }{ m! } \Big( \frac{\alpha^2}{2} \Big)^{k+m}.
\end{equation}
Here $q_{k,s}$ is defined by the generating function
\begin{equation} \label{q_k,s}
	\Big( \frac{t\, e^{t}}{ e^t-1 } \Big)^{k+2} \frac{2}{e^t+1} =\sum_{s=0}^\infty q_{k,s} \, t^s.
\end{equation}

\end{thm}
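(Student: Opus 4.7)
The plan is to start from the Forrester--Nagao closed-form expression \cite{MR2430570} for $E_{N,\tau_N}$ (valid for even $N$) as a finite sum of hypergeometric terms obtained from the skew-orthogonal polynomial representation of the real Pfaffian kernel. These skew-orthogonal polynomials are rescaled Hermite polynomials, so after integrating the one-point density over $\R$ one arrives at a sum of the form $\tfrac{1}{2} + \sum_{k=0}^{N/2-1} \Phi_k(\tau_N,N)$ involving $_2F_1$ (or equivalently incomplete Gamma) functions with parameters depending on $\tau_N$ and $k$. Substituting $\tau_N = 1-\alpha^2/N$ is the point at which the large-$N$ analysis begins.

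Next, I would convert this discrete summation into a double contour integral: the summation index is packaged via a standard residue contour that encloses $\{0,1,\dots,N/2-1\}$, while the hypergeometric factor is replaced by its Mellin--Barnes integral representation. This produces an object of the shape $\iint e^{N\,S(z,w;\alpha)}\,A(z,w;\alpha,N)\,dz\,dw$, in which the phase $S$ is $N$-independent to leading order and the prefactor $A$ admits a power-series expansion in $1/N$. A classical steepest-descent evaluation of the leading piece reduces, after the change of variables $u=\sin(\theta/2)$, to $\tfrac{4}{\pi}\int_0^1 \sqrt{1-u^2}\,e^{-\alpha^2 u^2}\,du$, which equals $c(\alpha)$ both in the form $e^{-\alpha^2/2}[I_0(\alpha^2/2)+I_1(\alpha^2/2)]$ and in the form \eqref{c alternative}; this recovers the leading order $Nc(\alpha)$. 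The next-to-leading contribution of the prefactor, combined with the boundary term analogous to the $+\tfrac{1}{2}$ in \eqref{E_{N,0} asym}, yields the stated constant term $c_0(\alpha)+\tfrac{1}{2}$.

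For the full expansion I would apply Stirling's asymptotic series to the Gamma ratios appearing in $A(z,w;\alpha,N)$. The appearance of the generating function \eqref{q_k,s} is natural in this framework: $te^t/(e^t-1)$ is the classical generating object for the Stirling-type correction to $\log\Gamma(N+\cdot)$, the power $k+2$ arises because $k+2$ Gamma factors are present after expanding the Pochhammer symbols of the hypergeometric, and the factor $2/(e^t+1)$ encodes the alternating parity structure inherent in the skew-orthogonal sum. Integrating each term of the formal expansion against the steepest-descent contour then reproduces $c_l(\alpha)$ once one matches the Taylor coefficients of $e^{(\alpha^2/2)(\cdots)}$ against the $q_{m,l+1-k}$, with the double factorials $(2k-3)!!/(2k)!!$ and $(2m-1)!!/m!$ emerging from the Gaussian moments around the saddle and from the half-integer Pochhammer expansions, respectively. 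The main obstacle will be the combinatorial bookkeeping required to identify the exact coefficients $c_l(\alpha)$ at every order, together with uniform control of the Stirling remainder needed to secure the $O(N^{-m})$ error; a secondary delicate point will be the pole analysis at the endpoints of the summation, which is what isolates the additive constant $\tfrac{1}{2}$ from the bulk steepest-descent contribution.
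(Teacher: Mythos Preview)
Your starting point (the Forrester--Nagao hypergeometric sum) and your identification of the leading term $c(\alpha)=\tfrac{4}{\pi}\int_0^1\sqrt{1-u^2}\,e^{-\alpha^2 u^2}\,du$ are both correct, but the machinery you propose is substantially heavier than what the paper uses, and your explanation of where the generating function \eqref{q_k,s} comes from is not quite right.

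The paper does \emph{not} introduce a double contour integral and does not use steepest descent at all. Instead, it first sums the hypergeometric terms over $k$ \emph{algebraically}: each ${}_2F_1(\tfrac12,\tfrac12;-2k+\tfrac12;x)$ is rewritten via the Pochhammer integral \cite[Eq.~(15.6.5)]{olver2010nist} as a residue at $\zeta=1$, and then the sum $\sum_{k=0}^{N/2-1}(1-\zeta)^{-2k-1}$ collapses by geometric series into a single rational factor. This yields a one-variable residue formula (Lemma~\ref{Prop_gN}), and the whole analysis is then a matter of expanding $(1-\zeta x)^{-1/2}$ around $\zeta=1$ with $x=1-N/\alpha^2$ and computing the resulting residues term by term. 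No saddle-point argument is needed; the leading Bessel form of $c(\alpha)$ drops out of the power-series identities \eqref{I0 exp}--\eqref{I1 exp}.

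The coefficients $q_{k,s}$ do not arise from Stirling's series for $\log\Gamma$ in the way you suggest. They come instead from the known large-parameter expansion of ${}_2\textup{\textbf{F}}_1(a,b;c+\lambda;z)$ as $\lambda\to\infty$ \cite[Eq.~(15.12.3)]{olver2010nist}, applied with $(a,b,z)=(1,-k-1,\tfrac12)$ after a Pfaff transformation; the generating function $(te^t/(e^t-1))^{k+2}\cdot 2/(e^t+1)$ is exactly the specialization of the NIST generating function $(\tfrac{e^t-1}{t})^{b-1}e^{t(1-c)}(1-z+ze^{-t})^{-a}$ to these parameters. Your heuristic that the exponent $k+2$ counts Gamma factors and that $2/(e^t+1)$ encodes parity is not how the structure actually appears. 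Your route via Mellin--Barnes and steepest descent may still be workable, but the ``combinatorial bookkeeping'' you flag as the main obstacle is precisely what the paper's single-residue approach circumvents, and you would need a concrete mechanism to recover the exact $q_{k,s}$ before the proposal can be judged complete.
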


We remark that the definition of $c(\alpha)$ in \eqref{c(alpha)} coincides with that in \eqref{c alternative}; see Remark \ref{rem:c_alpha}.

For a fixed $\tau_N \equiv\tau \in [0,1)$, we have the following full expansion of $E_{N,\tau}$. 

\begin{prop}\label{Thm_EN t fixed}
	Let $\tau_N \equiv \tau \in [0,1)$ be fixed. 
	Then for any positive integer $m$, we have 
	\begin{equation} \label{EN tau}
	 	E_{N,\tau}=\Big( \frac{2}{\pi} \frac{1+\tau}{1-\tau} N \Big)^{\frac12}\Big[ 1+\sum_{l=1}^{m-1} \frac{ a_l(\tau) }{N^{l}}+O(\frac{1}{N^m}) \Big]+\frac12, 
	\end{equation}
	as $N\to\infty$, where 
	\begin{equation}
	 a_l(\tau):=-\sqrt{1-\tau} \, \frac{(2l-3)!!}{2^l} \sum_{k=0}^\infty \frac{(2k-1)!!}{2^k \,k!} \wh{p}_{k,l} \, \tau^k.
	\end{equation}
	Here $\wh{p}_{k,l}$ is defined by the generating function
	\begin{equation} \label{p wh ks}
	\Big( \frac{e^t-1}{t} \Big)^{-\frac32} \frac{ 2\, e^{t(k+2)} }{ e^t+1 } =\sum_{l=0}^\infty \wh{p}_{k,l} \, t^l.
	\end{equation}
\end{prop}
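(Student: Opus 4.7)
The plan is to follow the same skew-orthogonal/contour-integral strategy used for Theorem \ref{Thm_EN}, specializing to the fixed-$\tau$ regime where standard Stirling/Watson-type asymptotics apply and no saddle degeneration occurs.

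First, I would start from the explicit Forrester--Nagao representation of $E_{N,\tau}$ that Section~\ref{Section_expectation} sets up en route to Theorem \ref{Thm_EN}. Using the skew-orthogonal polynomials for the elliptic weight (Hermite-type polynomials), one reduces the expected number of real eigenvalues to a finite sum
\[
 E_{N,\tau} = \frac{1}{2} + \sum_{k=0}^{N/2 - 1} T_k(N,\tau),
\]
where $T_k(N,\tau)$ is an explicit product of Gamma factors and the power $\tau^k$ (a hypergeometric-type summand). For fixed $\tau$ it is more convenient to keep this as a sum than to pass to the double contour integral used in the almost-Hermitian case.

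Second, I would expand each summand for large $N$ via Stirling. For every fixed $k$,
\[
 T_k(N,\tau) = \sqrt{\frac{2}{\pi N}} \sqrt{\frac{1+\tau}{1-\tau}} \, \frac{(2k-1)!!}{2^k k!} \, \tau^k \Big(1 + \sum_{l\ge 1} \frac{e_{k,l}}{N^l}\Big),
\]
with correction coefficients $e_{k,l}$ read off from Stirling's expansion applied to the relevant Gamma ratios. The three factors appearing in the generating function \eqref{p wh ks} have natural origins: $((e^t-1)/t)^{-3/2}$ is the generating function of the Stirling corrections at the $-3/2$ scaling dictated by the net Gamma-ratio exponent; $e^{t(k+2)}$ records the $k$-dependent shift in the Gamma arguments; and $2/(e^t+1)$ arises from a partial-summation (Abel-type) identity that absorbs the finite truncation at $k=N/2-1$ into a single generating function.

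Third, I would sum over $k$. For fixed $\tau\in[0,1)$ the coefficient $(2k-1)!!\,\tau^k/(2^k k!)$ decays like $\tau^k/\sqrt{\pi k}$, so the series defining $a_l(\tau)$ converges absolutely, and the contribution of $k\gtrsim \log N$ is super-polynomially small. This justifies exchanging the sum over $k$ with the $1/N$-expansion and extending the $k$-summation to infinity; reading off the coefficient of $N^{-l}$ and matching with the Taylor coefficients of \eqref{p wh ks} gives exactly $a_l(\tau)$, while the additive $\tfrac{1}{2}$ in \eqref{EN tau} is the boundary contribution from Forrester--Nagao.

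The main technical obstacle will be the combinatorial bookkeeping to verify that the three factors $((e^t-1)/t)^{-3/2}$, $e^{t(k+2)}$, and $2/(e^t+1)$ in \eqref{p wh ks} assemble in the correct way, and to produce remainder estimates uniform on compact subsets of $\tau\in[0,1)$. Relative to Theorem \ref{Thm_EN}, the fixed-$\tau$ analysis is substantially simpler because the underlying saddle point does not migrate with $N$ and there is no degeneration as $\tau\to 1$, so no special functions like $\erf$ appear in the expansion coefficients.
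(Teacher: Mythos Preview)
There is a genuine gap. In the Forrester--Nagao representation
\[
E_{N,\tau}=\Big(\frac{2}{\pi}\frac{1+\tau}{1-\tau}\Big)^{\frac12}\sum_{k=0}^{N/2-1}\frac{\Gamma(2k+\tfrac12)}{(2k)!}\,{}_2F_1\!\Big(\tfrac12,\tfrac12;-2k+\tfrac12;-\tfrac{\tau}{1-\tau}\Big),
\]
the individual summands depend only on $k$ and $\tau$, not on $N$; the only $N$-dependence is through the truncation. So ``expanding each summand for large $N$ via Stirling'' is not a meaningful operation, and there is no per-term $1/N$ series whose coefficients you can later sum. The $\tfrac12$ is likewise not a boundary term of this sum; in the paper it arises from an exact evaluation of a separate piece after a re-expansion. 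Your heuristic provenance of the three factors in $\wh{p}_{k,l}$ (Stirling at exponent $-3/2$, a $k$-shift, and an Abel-type partial summation for the truncation) does not match how they actually appear.

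What the paper does is precisely the step you discard: it first passes to the single-residue representation $E_{N,\tau}=(\tfrac{2}{\pi}\tfrac{1+\tau}{1-\tau})^{1/2}g_N(-\tfrac{\tau}{1-\tau})$ of Lemma~\ref{Prop_gN}, then binomially expands $(1-\zeta x)^{-1/2}$ around $\zeta=1$. This converts the original $k$-sum into a \emph{new} sum whose $k$-th term is the residue $\Res_{\zeta=1}[\zeta^{-3/2}(2-\zeta)^{-1}(\zeta-1)^{-(N-k-1)}]$, which now genuinely depends on $N$. That residue is recognized (via \eqref{2F1 int rep}, \eqref{2F1 linear trans}) as $\tfrac{1}{2\sqrt{2}}+\sqrt{\pi}\,\Gamma(\tfrac12-N+k)^{-1}{}_2\textup{\textbf{F}}_1(1,-\tfrac12;N-k;\tfrac12)$; summing the constant $\tfrac{1}{2\sqrt{2}}$ piece against the binomial weights produces the additive $\tfrac12$ exactly, and the known large-parameter asymptotic \eqref{2F1 asymp} with $(a,b,c,z)=(1,-\tfrac12,-k,\tfrac12)$ yields the generating function
\[
\Big(\frac{e^t-1}{t}\Big)^{b-1}e^{t(1-c)}(1-z+ze^{-t})^{-a}
=\Big(\frac{e^t-1}{t}\Big)^{-3/2}\frac{2\,e^{t(k+2)}}{e^t+1},
\]
i.e.\ $\wh{p}_{k,l}$, directly. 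So the three factors come from $b-1=-\tfrac32$, $1-c=k+1$ (plus one extra $e^t$ from $(1-z+ze^{-t})^{-1}$), and $z=\tfrac12$ in a single hypergeometric asymptotic, not from separate Stirling/Abel ingredients. If you want a route that avoids the residue lemma, you would need a large-$k$ expansion of the summand combined with Euler--Maclaurin to control the truncation at $k=N/2-1$; that is a different argument from the one you sketched and would require its own bookkeeping.
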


Our next object of interest is the variance $V_{N,\tau_N}$ of the number $\mathcal{N}_{\tau_N}$ of real eigenvalues in the almost-Hermitian regime.

\begin{thm}\label{Thm_Variance}
	Let $\tau_N=1-\tfrac{\alpha^2}{N}$. Then we have 
	\begin{equation} \label{VNEN}
	\lim_{ N \to \infty }	\frac{V_{N,\tau_N}}{E_{N,\tau_N} } =r(\alpha):= 2-2 \, e^{-\alpha^2/2} \frac{ I_0( \alpha^2 )+ I_1( \alpha^2 ) }{ I_0( \tfrac{\alpha^2}{2} )+ I_1( \tfrac{\alpha^2}{2} ) }.
	\end{equation}
\end{thm}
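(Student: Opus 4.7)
My plan follows the blueprint of Section~\ref{Section_expectation}. The starting point is the standard Pfaffian-point-process identity
\[
V_{N,\tau_N}=E_{N,\tau_N}-\int_{\R^{2}}\bigl[S_N(x,y)S_N(y,x)-D_N(x,y)I_N(x,y)\bigr]\,dx\,dy,
\]
where $(S_N,D_N,I_N)$ are the three scalar components of the Forrester--Nagao $2\times 2$ matrix kernel of the real eigenvalues of $X$. Since Theorem~\ref{Thm_EN} provides $E_{N,\tau_N}=Nc(\alpha)+O(1)$, the ratio $V_{N,\tau_N}/E_{N,\tau_N}$ tends to $r(\alpha)$ precisely when the double integral above equals $\bigl(2e^{-\alpha^{2}}[I_0(\alpha^{2})+I_1(\alpha^{2})]-c(\alpha)\bigr)\,N+o(N)$, as a short algebraic manipulation shows using the explicit form of $r(\alpha)$ in \eqref{VNEN} and of $c(\alpha)$ in \eqref{c(alpha)}.

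To reduce the double integral to a tractable expression, I would insert the Forrester--Nagao skew-orthogonal-polynomial representation of $(S_N,D_N,I_N)$ (built from rescaled Hermite polynomials in the Gaussian case) and use the Christoffel--Darboux-type contraction of Lemma~\ref{Lem_Hermite}, combined with orthogonality of Hermite functions in $L^{2}(\R,e^{-x^{2}}dx)$, to collapse one of the two integrations. The resulting expression is a finite sum over a Hermite index, parallel to the single-sum representation of $E_{N,\tau_N}$ used in Section~\ref{Section_expectation}. The $D_NI_N$ piece is handled in parallel: $D_N$ is essentially a derivative of $S_N$ and $I_N$ an antiderivative in the second variable, so integration by parts converts it into sums of the same shape as the $S_N^{2}$ sum.

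I would then carry out the large-$N$ analysis in the regime $\tau_N=1-\alpha^{2}/N$ by applying the Plancherel--Rotach strong asymptotics for Hermite functions to this single sum, exactly as in Section~\ref{Section_distributions}. The sum is a Riemann sum on scale $1/N$ over the bulk $x\in(-2,2)$; after parametrizing $x=2\cos\theta$ and performing the microscopic integration, one is led to one-dimensional $\theta$-integrals of the schematic form $\int_{0}^{\pi}e^{\alpha^{2}\cos\theta}(1\pm\cos\theta)\,d\theta$, which, via the standard identity $\tfrac{1}{\pi}\int_{0}^{\pi}e^{z\cos\theta}(1\pm\cos\theta)\,d\theta=I_0(z)\pm I_1(z)$, deliver the Bessel-function combination appearing in $r(\alpha)$.

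The principal obstacle is the cancellation between the $S_N^{2}$ and $D_NI_N$ pieces: individually they produce $I_0(\alpha^{2})$- and $I_1(\alpha^{2})$-type contributions together with oscillatory remainders, and one must align the leading Plancherel--Rotach expansions on both sides so that those remainders cancel and the clean combination $I_0(\alpha^{2})+I_1(\alpha^{2})$ emerges. A secondary technicality is uniformity of the Plancherel--Rotach asymptotics up to the spectral edges $\pm 2$; since $\rho_\alpha^{w}$ vanishes there, the edge contribution to the double integral is of lower order and can be absorbed into the $o(N)$ remainder, mirroring the edge analysis already used for the mean in Section~\ref{Section_expectation}.
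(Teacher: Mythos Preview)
Your starting identity already diverges from the paper's: the paper uses the simplified form
\[
V_{N,\tau_N}=2E_{N,\tau_N}-2\int_{\R^{2}}S_N(x,y)S_N(y,x)\,dx\,dy,
\]
taken from \cite{forrester2007eigenvalue}, so the $D_NI_N$ piece never enters. That is not fatal, but it means you are carrying an unnecessary term.

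The substantive gap is in your reduction step. You propose to ``collapse one of the two integrations'' via Hermite orthogonality in $L^{2}(\R,e^{-x^{2}}dx)$ and obtain a \emph{single} finite sum over a Hermite index. This does not work: in $S_N^1(x,y)$ the Gaussian weight is $e^{-x^{2}/(1+\tau_N)}$, while after the rescaling $x\mapsto x/\sqrt{2\tau_N}$ Hermite orthogonality requires the weight $e^{-x^{2}/(2\tau_N)}$. These coincide only at $\tau_N=1$; for $\tau_N=1-\alpha^{2}/N$ the off-diagonal products $H_jH_k$ do not integrate to zero, and the paper's Lemma~\ref{Lem_SN1 int} shows that what survives is a \emph{double} sum of hypergeometric functions in $(l,m)$, not a single sum. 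The whole difficulty of the variance, compared to the mean, is precisely this double-sum structure; your outline treats it as if it were absent.

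The paper's route is correspondingly different from yours. After Lemma~\ref{Lem_SN1 int}, the double sum is sliced along diagonals $n=l-m$ into single sums $\RN{1}_{N,n}$; each of these is given a \emph{double} contour-integral representation (Lemma~\ref{Lem_INn power}) whose residues are computed explicitly (Lemma~\ref{Lem_aNkn residue}); finally a combinatorial identity (Lemma~\ref{Lem_Com iden}) reassembles the slices into the closed form $c(\sqrt{2}\alpha)$. Plancherel--Rotach is used only to show that the $S_N^{2}$ contribution is negligible, not to evaluate the main term. Your proposed Riemann-sum-over-$\theta$ evaluation with the Bessel identity $\tfrac{1}{\pi}\int_{0}^{\pi}e^{z\cos\theta}(1\pm\cos\theta)\,d\theta=I_0(z)\pm I_1(z)$ is an appealing heuristic, and something like it could in principle be made rigorous by first establishing the bulk microscopic scaling limit of $S_N^{1}$ and then performing a two-scale integration; but that requires a separate local-limit theorem and uniform tail bounds that you have not supplied, and it is not what the orthogonality argument you describe would give.
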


Note that the behavior \eqref{r limits} of $r(\alpha)$ can be easily checked using the well-known asymptotic 
\[
I_\nu(x) \sim 
\begin{cases}
	\frac{(x/2)^\nu}{\Gamma(\nu+1)} &\text{as} \quad x \to 0,
	\\
	\frac{e^x}{\sqrt{2\pi x}} & \text{as} \quad x \to \infty,
\end{cases}
\]
see e.g., \cite[Eq.(10.30.1), (10.30.4)]{olver2010nist}.

As an immediate consequence of Theorems~\ref{Thm_EN} and~\ref{Thm_Variance}, we obtain the convergence of the random variables $\mathcal{N}_{\tau_N}/N$ in probability as follows. 

\begin{cor}
	Let $\tau_N=1-\tfrac{\alpha^2}{N}$. Then we have
\begin{equation}
	\frac{\mathcal{N}_{\tau_N}}{N} \to c(\alpha) 
\end{equation}
as $N\to \infty$, in probability. 
\end{cor}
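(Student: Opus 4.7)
The plan is to deduce convergence in probability from the first and second moment asymptotics via Chebyshev's inequality, which is the standard route when the variance of a sum is of smaller order than the square of the mean.

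First, I would record the two ingredients already in hand. From Theorem~\ref{Thm_EN},
\begin{equation}
\frac{E_{N,\tau_N}}{N} = c(\alpha) + O\!\left(\frac{1}{N}\right),
\end{equation}
so $\mathbf{E}[\mathcal{N}_{\tau_N}/N] \to c(\alpha)$. From Theorem~\ref{Thm_Variance}, $V_{N,\tau_N} = r(\alpha)\, E_{N,\tau_N}(1+o(1))$, and since $E_{N,\tau_N} = O(N)$, this gives $V_{N,\tau_N} = O(N)$. Consequently,
\begin{equation}
\mathrm{Var}\!\left(\frac{\mathcal{N}_{\tau_N}}{N}\right) = \frac{V_{N,\tau_N}}{N^2} = O\!\left(\frac{1}{N}\right) \longrightarrow 0.
\end{equation}

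Next, for any fixed $\varepsilon>0$, I would split by the triangle inequality:
\begin{equation}
\mathbf{P}\!\left(\left|\frac{\mathcal{N}_{\tau_N}}{N} - c(\alpha)\right| > \varepsilon \right)
\le \mathbf{P}\!\left(\left|\frac{\mathcal{N}_{\tau_N} - E_{N,\tau_N}}{N}\right| > \frac{\varepsilon}{2}\right) + \mathbbm{1}\!\left\{\left|\frac{E_{N,\tau_N}}{N} - c(\alpha)\right| > \frac{\varepsilon}{2}\right\}.
\end{equation}
The deterministic indicator vanishes for all $N$ large enough by Theorem~\ref{Thm_EN}, while Chebyshev's inequality controls the stochastic term by $4\,\mathrm{Var}(\mathcal{N}_{\tau_N}/N)/\varepsilon^2 = O(1/(N\varepsilon^2))$, which tends to $0$. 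Letting $N\to\infty$ yields the claim.

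There is no genuine obstacle here; the statement is essentially a corollary of Chebyshev once the mean is of order $N$ and the variance is of order $N$. The only thing to check carefully is that $r(\alpha)$ is finite for every $\alpha>0$ (which is clear from \eqref{VNEN} and the entirety of $I_0, I_1$), so that $V_{N,\tau_N}/E_{N,\tau_N}$ is bounded in $N$ for each fixed $\alpha$, and hence $V_{N,\tau_N}=O(N)$ as used above.
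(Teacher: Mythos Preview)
Your proof is correct and matches the paper's approach: the paper simply states the corollary as ``an immediate consequence of Theorems~\ref{Thm_EN} and~\ref{Thm_Variance}'', which is precisely the Chebyshev argument you spelled out. There is nothing to add.
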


\begin{rmk}
 The functions $a_l$ in Proposition~\ref{Thm_EN t fixed} for the first few values of $l$ are given as follows:
 \begin{align*}
	&a_1(\tau):=\tfrac{\tau-3}{8(1-\tau)}, \hspace{7.5em} a_2(\tau):=\tfrac{5\tau^2-14\tau-3}{128(1-\tau)^2}, 
	\\
	&a_3(\tau):=\tfrac{-17\tau^3+73\tau^2-203\tau+27}{1024(1-\tau)^3},
	\qquad a_4(\tau):=\tfrac{-541\tau^4+2684\tau^3-6846\tau^2-4196\tau+499}{32768(1-\tau)^4}. 
\end{align*}
Note that the asymptotic expansion \eqref{EN tau} for $\tau=0$ recovers \eqref{E_{N,0} asym}.
	
	We also remark that the functions $a_l$ can be written as $a_l(\tau)=P_l( \tfrac{\tau}{\tau-1} ),$
	where $P_l$ is a polynomial of degree $l$ given by
	\begin{equation}
		P_l(x):=\frac{(2l-3)!!}{2^{l}}\sum_{k=0}^{l} (-1)^{k+1}\frac{(2k-1)!!}{2^{k} k! } p_{k,l-k} \,x^k.
	\end{equation}
	Here $p_{k,s}$ is defined by the generating function
	\begin{equation}
		\Big( \frac{e^t-1}{t} \Big)^{k-\frac32} \frac{2\,e^{2t}}{e^t+1} =\sum_{s=0}^\infty p_{k,s} \, t^s.
	\end{equation} 
\end{rmk}

\begin{rmk} In general, the functions $c_l$'s are of the form
\begin{equation}
	c_l(\alpha)= e^{-\alpha^2/2} [ P_{l,0}(\alpha) \, I_0( \tfrac{\alpha^2}{2} )+P_{l,1}(\alpha)\, I_1( \tfrac{\alpha^2}{2} ) ],
\end{equation}
where $P_{l,1}, P_{l,2}$ are some even polynomials of degree $4l+2$. For example, we have 
$$
P_{1,0}(\alpha)=-\tfrac{\alpha^4(3\alpha^2-8)}{48}, \qquad P_{1,1}(\alpha)=\tfrac{\alpha^2(3\alpha^4-8\alpha^2-2)}{48}
$$
and 
\begin{align*}
P_{2,0}(\alpha)=\tfrac{\alpha^4(\alpha^{6}-8\alpha^4+11\alpha^2+1)}{96}, \qquad P_{2,1}(\alpha)=-\tfrac{\alpha^2(\alpha^{8}-7\alpha^6+6\alpha^4+3\alpha^2+4)}{96}.
\end{align*}
\end{rmk}

\begin{rmk} \label{rem:c_alpha}
To see the equivalence of the definitions of $c(\alpha)$, recall that the error function has the power series expansion (see \cite[Eq.(7.6.1)]{olver2010nist})
	\[
		\erf(z)=\frac{2}{\sqrt{\pi}} \sum_{n=0}^{\infty}\frac{(-1)^n }{ n! \, (2n+1) } z^{2n+1}.
	\]
	Then the expression \eqref{c(alpha)} can be obtained from \eqref{c alternative} by straightforward computations using
	\[
	\int_0^1 (1-s^2)^{n+\frac12} \,ds= \frac{\sqrt{\pi}}{2} \frac{\Gamma(n+\frac32)}{(n+1)!}.
	\]
\end{rmk}

We now discuss the density $\rho_N \equiv \rho_{N,\tau_N}$ of real eigenvalues. 
In the almost-Hermitian regime, we obtain the following theorem, which features a one-parameter family of probability distributions $\rho_\alpha^w$ interpolating between the Wigner semicircle law in \eqref{rhos} and the uniform distribution in \eqref{rhosc} (i.e., the elliptic law restricted on the real axis).

\begin{thm}\label{Thm_density}
	Let $\tau_N=1-\tfrac{\alpha^2}{N}$. Then as $N\to\infty$, 
	\begin{equation}
		\label{Goal2}
		\rho_{N,\tau_N}(x) \to \rho_\alpha^w(x):=\mathbbm{1}_{ [-2,2] }(x) \cdot \frac{1}{c(\alpha)} \frac{1}{2\alpha \sqrt{\pi}} \erf( \tfrac{\alpha}{2} \sqrt{4-x^2} )
	\end{equation}
	uniformly on compact subsets of $(-2,2).$ Here $c(\alpha)$ is a normalization constant given by \eqref{c alternative}, which turns $\rho_\alpha^w$ into a probability density function.
\end{thm}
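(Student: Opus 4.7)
The plan is to begin from the Forrester–Nagao skew-orthogonal polynomial representation of the Pfaffian correlation kernel for the real elliptic Ginibre ensemble. In that representation the density on the real axis takes the form $\rho_{N,\tau_N}(x) = w_N(x)\, S_N(x,x)$, where $w_N$ is an explicit Gaussian weight (behaving like $e^{-x^2/4}/\sqrt{4\pi}$ up to lower order since $1+\tau_N\to 2$) and $S_N$ is a pre-kernel built as a finite sum of products of rescaled Hermite polynomials $H_{2k}$, $H_{2k+1}$ evaluated at the real point $x$ rescaled by $\sqrt{2\tau_N}$. Writing this sum explicitly from \cite{MR2430570} is the first step; once written out, the apparent obstacle is that one has a sum of $\Theta(N)$ terms, each oscillating, which needs to be collapsed.

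Next I would invoke the Christoffel–Darboux-type identity for Hermite polynomials quoted in the paper as Lemma \ref{Lem_Hermite} to reduce the pre-kernel to a finite combination of products of Hermite polynomials of degrees $N$ and $N-1$ (possibly at slightly shifted arguments), together with a boundary term. This is the analogue, in the skew-orthogonal setting, of collapsing $\sum_{k<N} H_k(x)H_k(y)$ into the Christoffel–Darboux formula; the novelty here is that the relevant variables are complex because $\sqrt{2\tau_N}\,x$ has to be compared with $x$, and this discrepancy is what encodes $\alpha$ in the limit. After this reduction, $\rho_{N,\tau_N}(x)$ is expressed as a manageable sum of a few terms, each of which is now amenable to asymptotic analysis.

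The heart of the argument is then to feed in the Plancherel–Rotach strong asymptotics for $H_N$ in the bulk $|x|<2$. Writing $\sqrt{2\tau_N}\, x = x\sqrt{1-\alpha^2/N}$, the Plancherel–Rotach phase acquires a subleading correction of order $\alpha^2/\sqrt{N}$ relative to the leading sinusoidal oscillation of frequency $\sqrt{N}$. After expanding to this order, the leading $\Theta(1)$ oscillations in each term of the collapsed pre-kernel cancel against each other (this cancellation reflects the fact that for $\alpha=0$ we must recover the GOE semicircle density), and what survives is a single oscillatory integral whose amplitude depends on $\alpha$. The remaining oscillatory integral, after a short computation, can be recognized as a Gaussian integral on a finite interval whose endpoints come from the Plancherel–Rotach arccos-angle: parametrizing $x=2\cos\theta$ and rescaling, it takes the form $\int_{-c}^{c} e^{-\alpha^2 u^2/\ldots}\,du$ up to harmless constants, which produces precisely $\mathrm{erf}(\tfrac{\alpha}{2}\sqrt{4-x^2})$ after normalization.

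The hard part will be the error control: justifying that the subleading terms in Plancherel–Rotach, as well as the boundary contributions from the Christoffel–Darboux reduction, are genuinely negligible uniformly on compact subsets of $(-2,2)$. In the almost-Hermitian regime one cannot simply bound oscillatory sums by their $L^\infty$ norms, since the desired limit is itself obtained only after the leading oscillations cancel; one must keep track of phases to the next order and use cancellation in the style of a discrete stationary-phase / Riemann–Lebesgue estimate. Once these error estimates are in place, uniform convergence on compact subsets of $(-2,2)$ is automatic because the Plancherel–Rotach remainder estimates are uniform on such compacts. Finally, the constant $c(\alpha)$ appears as the normalization by construction, matching \eqref{c(alpha)} via the identity in Remark \ref{rem:c_alpha}.
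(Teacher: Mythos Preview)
Your broad outline (Forrester--Nagao kernel $\to$ Christoffel--Darboux reduction $\to$ Plancherel--Rotach $\to$ oscillatory estimates) matches the paper's Section~\ref{Section_distributions}, but two concrete steps are misidentified and one essential step is missing.

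First, the $\tau$-weighted sum $F_N(x)=\sum_{k\le N-2}\tfrac{(\tau/2)^k}{k!}H_k(x)^2$ does \emph{not} collapse to ``a finite combination of products of Hermite polynomials of degrees $N$ and $N-1$'' via Christoffel--Darboux: the weights $(\tau/2)^k$ instead of $1/2^k$ block the standard telescoping. What Lemma~\ref{Lem_Hermite} actually yields is a first-order ODE, $F_N'(x)=\tfrac{4\tau x}{1+\tau}F_N(x)-\text{(boundary term)}$, so after multiplying by the Gaussian weight one obtains a closed form only for the \emph{derivative} $(\textbf{R}_N^1)'(x)$, not for $\textbf{R}_N^1(x)$ itself. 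The paper then integrates this relation (Lemma~\ref{Prop_rhoN1 rep}), and the integration constant is the value at $x=0$. Computing $\rho_N(0)$ is a separate and non-trivial step (Lemma~\ref{Lem_density 0}), carried out directly from the Hermite numbers $H_k(0)$ via hypergeometric transformations; your proposal has no counterpart to this, yet without it the argument cannot pin down the limit.

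Second, the mechanism you describe for how $\erf$ appears is not what happens. There is no Gaussian integral $\int_{-c}^{c}e^{-\alpha^2 u^2/\cdots}\,du$ with $x$-dependent endpoints, nor do ``leading $\Theta(1)$ oscillations cancel against each other''. After Plancherel--Rotach the integrand $e^{-\frac{N}{1+\tau_N}u^2}H_{N-2}H_{N-1}$ takes the form $h'(u)\bigl(1+\text{(oscillatory)}\bigr)$ with $h'(u)=\tfrac{1}{4\sqrt{\pi}}\tfrac{u}{\sqrt{4-u^2}}e^{\alpha^2 u^2/4}$; the oscillatory piece is $O(1/N)$ by a single integration by parts (Lemma~\ref{Lem_Oscill}), with no stationary-phase subtlety since the phase $\phi(u)=\tfrac{u\sqrt{4-u^2}}{4}-\arccos\tfrac{u}{2}$ has no critical points on compact subsets of $(-2,2)$. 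The $\erf$ then comes out as the exact antiderivative of the non-oscillatory part $h'$. So the analytic difficulty you anticipate (tracking phases to next order, discrete stationary phase) does not arise, while the step you omit (the value at $x=0$) is where the real work lies.
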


\begin{figure}[h]
	\begin{center}
		\begin{subfigure}[h]{0.24\textwidth}
			\includegraphics[width=\textwidth]{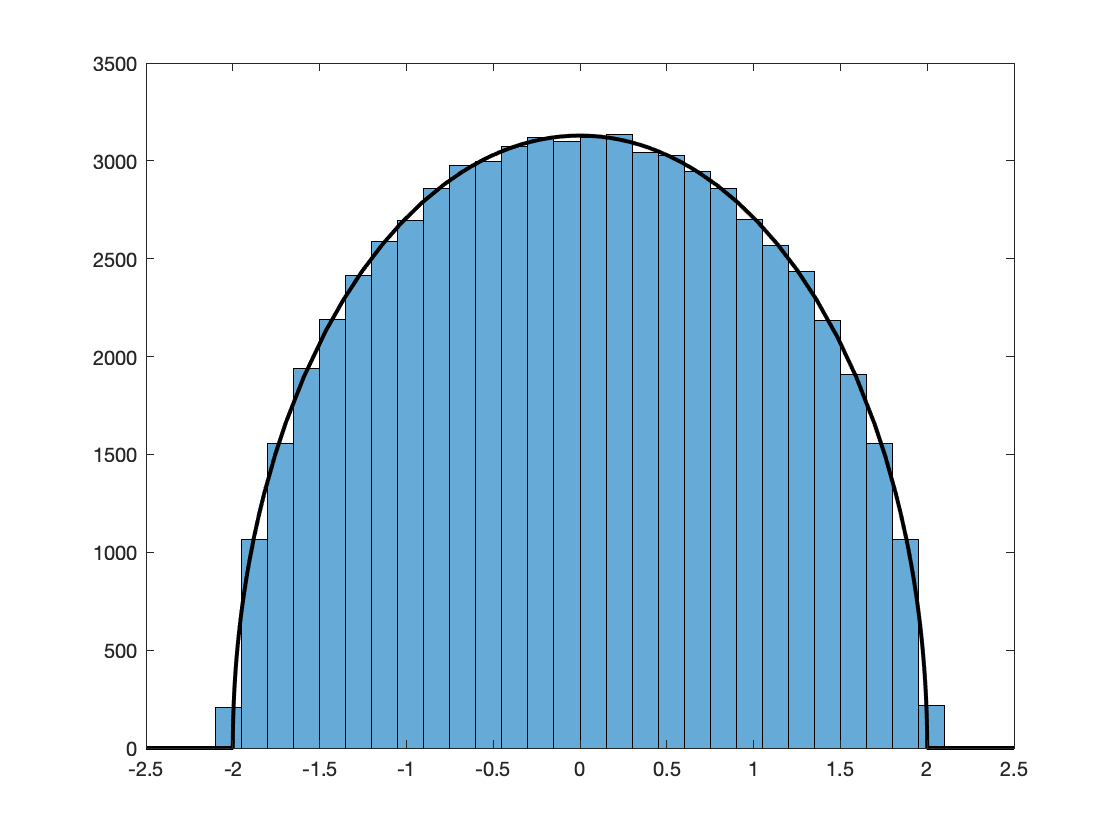}
			\caption{$\alpha=0$}
		\end{subfigure} 
		\begin{subfigure}[h]{0.24\textwidth}
			\includegraphics[width=\textwidth]{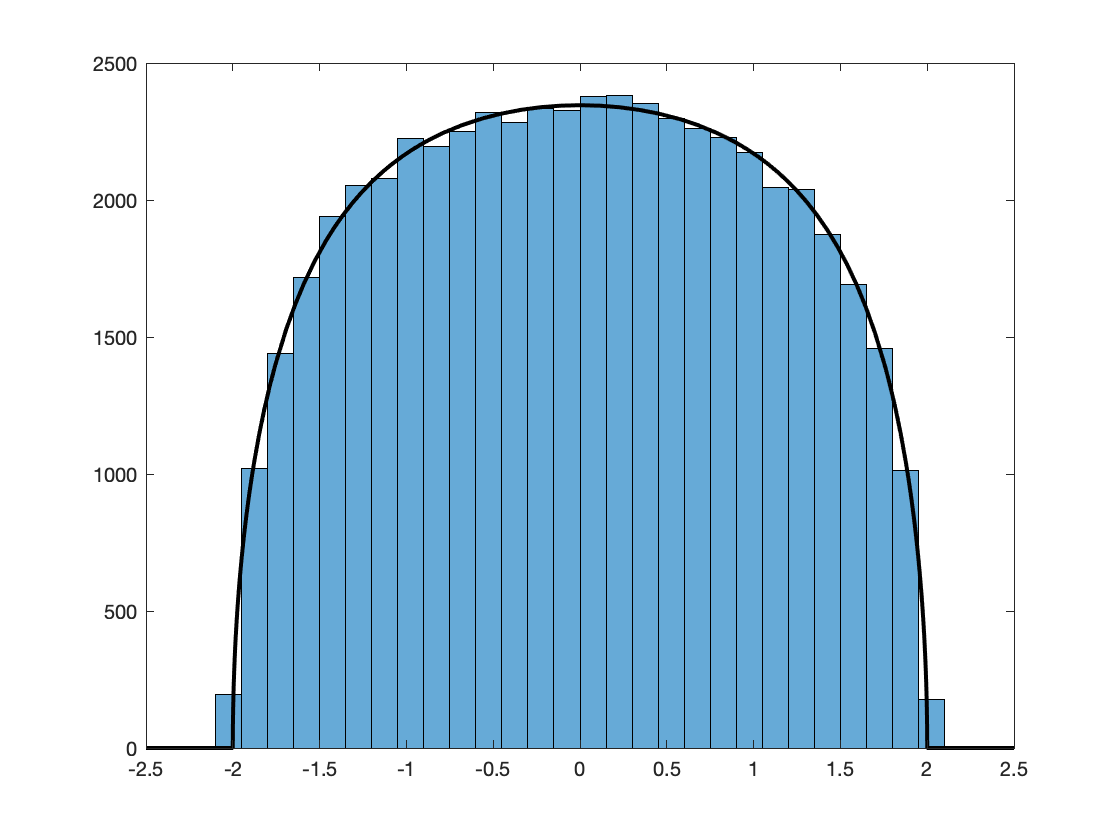}
			\caption{$\alpha=1$}
		\end{subfigure} 
		\begin{subfigure}[h]{0.24\textwidth}
			\includegraphics[width=\textwidth]{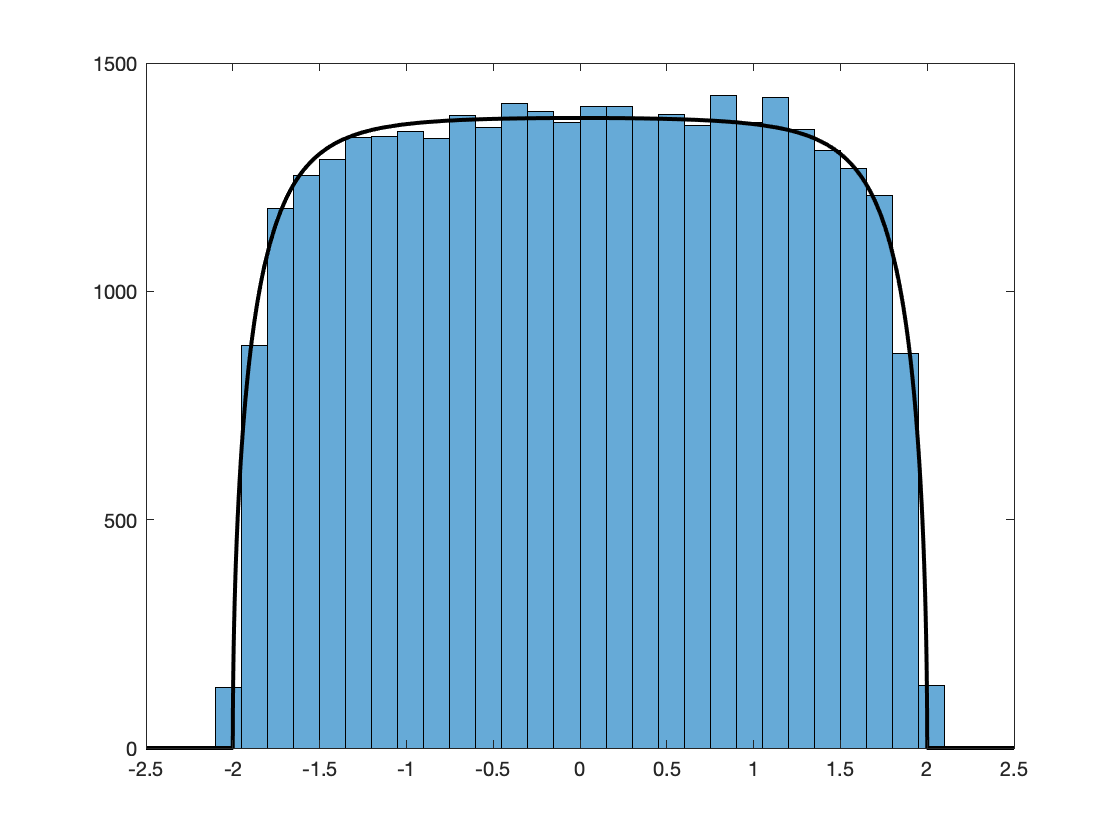}
			\caption{$\alpha=2$}
		\end{subfigure} 
		\begin{subfigure}[h]{0.24\textwidth}
			\includegraphics[width=\textwidth]{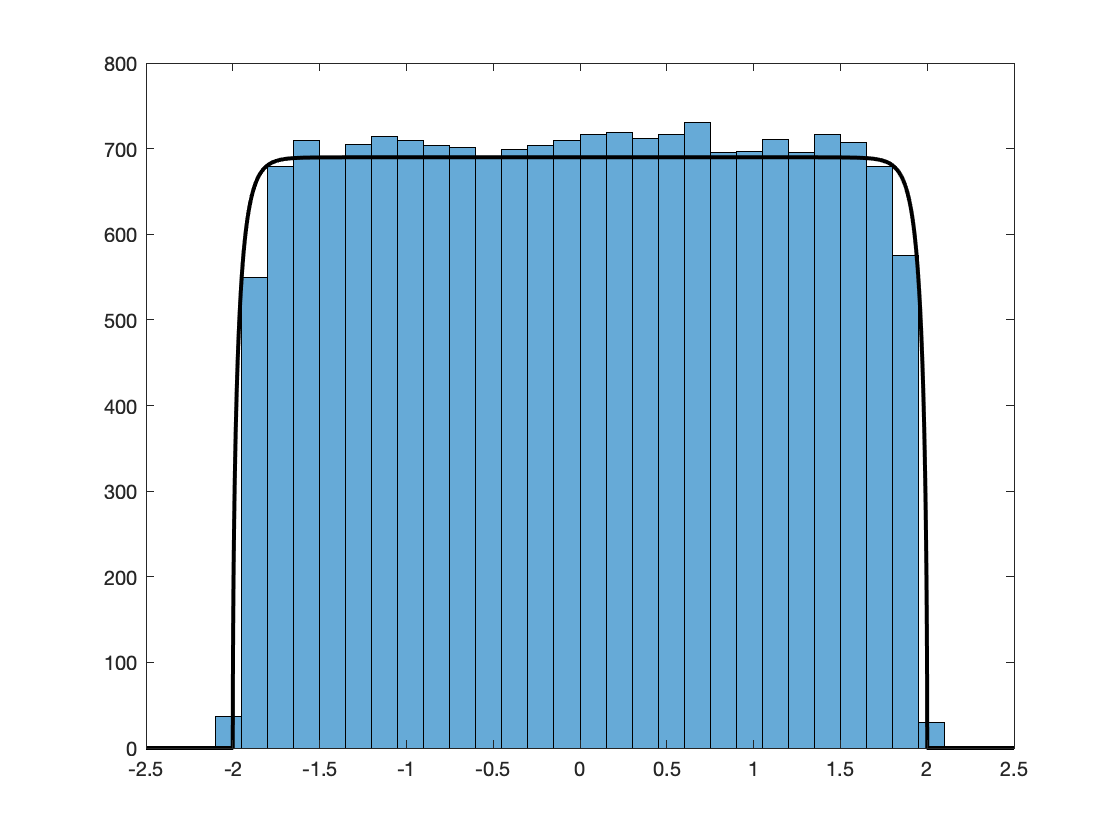}
			\caption{$\alpha=4$}
		\end{subfigure} 
	\end{center}
	\caption{ The plots display histograms of real eigenvalues of $X$. The full line in each histogram is the graph of $\rho_\alpha^w$ with an appropriate scale. All the histograms are drawn from $256$ samples with $N=256$.}
	\label{Fig_GauRealEigenValueHistogram}
\end{figure}

\section{Expected number of real eigenvalues}\label{Section_expectation}

In this section, we prove Theorem~\ref{Thm_EN} and Proposition~\ref{Thm_EN t fixed}. Our proof is based on the following representation of $E_{N,\tau_N}$ shown in \cite{MR2430570}: for any $\tau_N < 1$,
\begin{align} \label{EN}
	\begin{split}
		E_{N,\tau_N}&=\Big(\frac{2}{\pi} \frac{1+\tau_N}{1-\tau_N} \Big)^{\frac12} \sum_{k=0}^{N/2-1} \frac{\Gamma(2k+\frac12)}{(2k)!} {}_2 F_1( \tfrac12, \tfrac12; -2k+\tfrac12; -\tfrac{\tau_N}{1-\tau_N} ),
	\end{split}
\end{align}
where ${}_2 F_1$ is the hypergeometric function defined by the Gauss series 
\begin{equation} \label{2F1 Gauss series}
	{}_2 F_1(a,b;c;z):=\frac{\Gamma(c)}{\Gamma(a)\Gamma(b)} \sum_{s=0}^\infty \frac{\Gamma(a+s) \Gamma(b+s) }{ \Gamma(c+s) s! } z^s, \quad (|z|<1)
\end{equation}
and by analytic continuation elsewhere. 

Our strategy for the proof of Theorem~\ref{Thm_EN} is summarized as follows:
\begin{itemize}
 \item we obtain a contour integral representation of $E_{N,\tau_N}$, which holds for any $\tau_N<1$ (Lemma~\ref{Prop_gN}); 
 \item we treat the integrand in the previous step as a function of variables $N$ and $\alpha$, and compute its power series expansion;
 \item we derive the large-$N$ expansion of $E_{N,\tau_N}$ by the residue calculus and express its coefficients in terms of the modified Bessel functions.
\end{itemize}

To analyze the right-hand side of the identity \eqref{EN}, we begin with the following lemma, which provides a contour integral representation of $E_{N,\tau_N}$.

\begin{lem}\label{Prop_gN}
For any $N>1$ and $\tau_N \in [0,1)$, we have 
\begin{align}
\begin{split}
\label{EN int rep}
	E_{N,\tau_N}&=\Big(\frac{2}{\pi} \frac{1+\tau_N}{1-\tau_N} \Big)^{\frac12} g_N( -\tfrac{\tau_N}{1-\tau_N} ),
\end{split}
\end{align}
where $g_N: \R_{-} \to \R_{+}$ is defined by
\begin{equation} \label{g_N}
	g_N(x):=\sqrt{\pi } \,\, \underset{\zeta=1}{\textup{Res}} \, \Big[ \frac{\zeta^{-\frac32} (2-\zeta)^{-1} (1-\zeta x )^{-\frac12} }{ (\zeta-1)^{N-1} } \Big].
\end{equation}
\end{lem}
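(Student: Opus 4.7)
The plan is to transport the hypergeometric sum in \eqref{EN} into a coefficient-extraction from a single generating function and then to recognize the residue in \eqref{g_N} as extracting the same coefficient. Setting $h(\zeta):=\zeta^{-3/2}(2-\zeta)^{-1}(1-\zeta x)^{-1/2}$ and making the change of variables $w=\zeta-1$, the residue in \eqref{g_N} becomes
\[
g_{N}(x)=\sqrt{\pi}\,[w^{N-2}]h(1+w),
\]
the Taylor coefficient of order $N-2$ of $h(1+w)$ at $w=0$.

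The crux will be the following power-series identity, which I plan to establish first: for every integer $k\geq 0$,
\[
\frac{\Gamma(2k+\tfrac12)}{(2k)!}\,{}_{2}F_{1}\!\left(\tfrac12,\tfrac12;-2k+\tfrac12;x\right)=\sqrt{\pi}\,[t^{2k}]\,(1+t)^{-1/2}\bigl(1-x(1+t)\bigr)^{-1/2}.
\]
To prove it, I would expand both sides as power series in $x$. On the right, writing $(1-x(1+t))^{-1/2}=\sum_{s\geq 0}\frac{(1/2)_{s}}{s!}x^{s}(1+t)^{s}$ and combining with $(1+t)^{-1/2}$ gives $(1+t)^{s-1/2}$, and extracting $[t^{2k}]$ yields $\binom{s-1/2}{2k}=\Gamma(s+\tfrac12)/\bigl(\Gamma(s-2k+\tfrac12)(2k)!\bigr)$. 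On the left, the Gauss series \eqref{2F1 Gauss series} gives the $x^{s}$-coefficient $\frac{\Gamma(2k+1/2)}{(2k)!}\frac{(1/2)_{s}^{2}}{s!\,(-2k+1/2)_{s}}$, where $(-2k+\tfrac12)_{s}=\Gamma(s-2k+\tfrac12)/\Gamma(-2k+\tfrac12)$. After these cancellations, equality of the two coefficients reduces to the Euler reflection identity $\Gamma(2k+\tfrac12)\Gamma(-2k+\tfrac12)=\pi/\cos(2k\pi)=\pi$. Since both sides of the claimed identity are analytic in $x$ on a common domain and agree as formal power series around $x=0$, the identity extends to the full range, in particular to $x=-\tau_{N}/(1-\tau_{N})\in(-\infty,0]$.

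With the key identity in hand, the remaining step is a short generating-function manipulation. Factoring $(1+w)^{-3/2}=(1+w)^{-1}(1+w)^{-1/2}$ yields the algebraic factorization
\[
h(1+w)=\frac{f(w)}{1-w^{2}},\qquad f(w):=(1+w)^{-1/2}\bigl(1-x(1+w)\bigr)^{-1/2}.
\]
Expanding $(1-w^{2})^{-1}=\sum_{j\geq 0}w^{2j}$ and extracting $[w^{N-2}]$, using that $N-2$ is even, gives
\[
[w^{N-2}]h(1+w)=\sum_{j=0}^{N/2-1}[w^{N-2-2j}]f(w)=\sum_{k=0}^{N/2-1}[w^{2k}]f(w),
\]
so only the even-index Taylor coefficients of $f$ contribute (which is exactly consistent with the even-$N$ assumption inherited from \eqref{EN}). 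Multiplying by $\sqrt{\pi}$ and applying the key identity term by term gives $g_{N}(x)=\sum_{k=0}^{N/2-1}\frac{\Gamma(2k+1/2)}{(2k)!}\,{}_{2}F_{1}(\tfrac12,\tfrac12;-2k+\tfrac12;x)$, and substituting $x=-\tau_{N}/(1-\tau_{N})$ into \eqref{EN} produces \eqref{EN int rep}. The main obstacle is really the key power-series identity; the Euler-reflection reduction is the genuine content of the lemma, as it is what aligns the continuous generating function on one side with the discrete hypergeometric sum on the other. Once that alignment is established, the rest is routine algebra, and an added bonus of the residue formulation is that it automatically encodes the analytic continuation of the hypergeometric series beyond the unit disk.
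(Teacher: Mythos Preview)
Your proof is correct and shares the same global structure as the paper's: both establish the per-term identity
\[
\frac{\Gamma(2k+\tfrac12)}{(2k)!}\,{}_2F_1\!\Big(\tfrac12,\tfrac12;-2k+\tfrac12;x\Big)=\sqrt{\pi}\,[w^{2k}]\,(1+w)^{-1/2}\bigl(1-x(1+w)\bigr)^{-1/2}
\]
and then sum over $k$ via a geometric series to produce the single residue \eqref{g_N}. The difference lies only in how the per-term identity is obtained. The paper invokes the Pochhammer-contour integral representation \eqref{2F1 int rep} of ${}_2F_1$, simplifies the prefactor using the reflection formula, and collapses the contour to a residue at $\zeta=1$; you instead match Taylor coefficients in $x$ directly and reduce the equality to $\Gamma(2k+\tfrac12)\Gamma(-2k+\tfrac12)=\pi$. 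Your route is more elementary in that it avoids the Pochhammer machinery entirely and works purely with power series; the paper's route is more systematic in that it starts from a standard catalogued integral formula. The subsequent summation step is literally the same manipulation viewed two ways: the paper sums $\sum_{k}(1-\zeta)^{-2k-1}$ and keeps the pole at $\zeta=1$, while you factor $h(1+w)=f(w)/(1-w^{2})$ and expand $(1-w^{2})^{-1}$, which after the shift $w=\zeta-1$ is identical.
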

\begin{proof}
We first analyze the summand in \eqref{EN}. 
For $|\arg(1-z)|<\pi$ and $b,1-c,c-b \not\in \mathbb{N},$ the hypergeometric function ${}_2F_1(a,b;c;z)$ has an integral representation
\begin{align}\label{2F1 int rep}
	\begin{split}
	{}_2 F_1(a,b;c;z)&=-\frac{\Gamma(c)}{\Gamma(b) \Gamma(c-b) } \frac{ e^{ -i\pi c} }{ 4 \sin (\pi b) \sin (\pi (c-b) ) }
	\\
	&\quad \times \oint_{ \mathcal{P}(0,1) } \zeta^{b-1} (1-\zeta)^{c-b-1} (1-\zeta z)^{-a}\,d\zeta, 
	\end{split}
\end{align}
see e.g., \cite[Eq.(15.6.5)]{olver2010nist}. Here $\mathcal{P}(0,1)$ is a Pochhammer contour entwining $0$ and $1$. 
It follows from the reflection formula of Gamma function
\begin{equation}\label{Gamma reflection}
\Gamma(z)\Gamma(1-z)=\pi/\sin(\pi z)
\end{equation} 
that for $\kappa \in \C \setminus \mathbb{Z}$, 
\begin{equation*}
\frac{\Gamma(2\kappa+\frac12)}{(2\kappa)!} \frac{ \Gamma(-2\kappa+\frac12) }{ \Gamma(\frac12) \Gamma(-2\kappa) } \frac{ e^{ -i\pi/2 } }{ 4 \sin (2\kappa \pi) }=\frac{i}{4\sqrt{\pi}} \sec(2\kappa \pi), 
\end{equation*}
which has a removable singularity at each $\kappa \in \mathbb{Z}$. Combining the equations above, for $x<0$ we have 
\begin{align*}
\frac{\Gamma(2k+\frac12)}{(2k)!} {}_2 F_1( \tfrac12, \tfrac12; -2k+\tfrac12; x )&=\frac{i}{4\sqrt{\pi}} \oint_{ \mathcal{P}(0,1) } \zeta^{-\frac12} (1-\zeta)^{-2k-1} ( 1-\zeta x )^{-\frac12}\,d\zeta\\
&=-\sqrt{\pi} \, \, \underset{\zeta=1}{\text{Res}} \, \Big[ \zeta^{-\frac12} (1-\zeta)^{-2k-1} (1-\zeta x)^{-\frac12}\Big].
\end{align*}
Here the second identity follows from $-2k-1 \in \mathbb{Z}$ and the standard deformation of the Pochhammer contour. 

Taking the sum of the above, we observe that only the second term of the right-hand side below 
$$
\sum_{k=0}^{N/2-1} (1-\zeta)^{-2k-1}=\frac{1-\zeta}{\zeta(\zeta-2)}-\frac{1}{(1-\zeta)^{N-1}\zeta(\zeta-2)} ,
$$
contributes to the residue.
Thus we have 
\begin{align}
	\begin{split}
	\sum_{k=0}^{N/2-1} \frac{\Gamma(2k+\frac12)}{(2k)!} {}_2 F_1( \tfrac12, \tfrac12; -2k+\tfrac12; x )=g_N(x).
	\end{split}
\end{align}
Combining this with the expression \eqref{EN}, the proof is complete.
\end{proof}

\begin{rmk}
When $\tau_N \equiv 0$, it follows from ${}_2F_1(a,b;c;0)=1$ and the duplication formula of Gamma function that the expression \eqref{EN} is simplified to 
\begin{align} \label{EN tau=0}
	\begin{split}
		E_{N,0}=\sqrt{2} \sum_{k=0}^{N/2-1} \frac{(4k-1)!!}{(4k)!!}
		&=\frac12+\sqrt{\frac{2}{\pi}} \frac{\Gamma(N+\frac12)}{(N-1)!} {}_2 F_1( 1,-\tfrac12;N;\tfrac12 ).
	\end{split}
\end{align}
The formula \eqref{EN tau=0} also appears in \cite[Corollary 5.1, 5.3]{MR1231689}.
For $\tau_N\equiv 0$, one can recognize the right-hand side of \eqref{EN int rep} in terms of a hypergeometric function using \eqref{2F1 int rep}, namely, we have
\begin{equation}
	g_N(0)= \frac{ \Gamma(N-\frac12) }{ (N-2)! } 	{}_2 F_1(1,-\tfrac12;-N+\tfrac32;\tfrac12). 
\end{equation}
Note that the regularized hypergeometric function ${}_2\textup{\textbf{F}}_1(a,b;c;z):=\frac{1}{\Gamma(c)} {}_2F_1(a,b;c;z)$ satisfies the linear transform 
\begin{align} \label{2F1 linear trans}
\begin{split}
\frac{ \sin(\pi(c-b)) }{\pi} {}_2\textup{\textbf{F}}_1(1,b;c;z)&=\frac{ 1 }{ \Gamma(b) \Gamma(c-b) } (1-z)^{c-1-b} z^{1-c}
\\
&- \frac{1}{\Gamma(c-1)\Gamma(c-b)}{}_2\textup{\textbf{F}}_1(1,b;b-c+2;1-z),
\end{split}
\end{align}
see \cite[Eq.(15.4.6), (15.8.6)]{olver2010nist}. Using this, one can observe that Lemma~\ref{Prop_gN} for $\tau_N \equiv 0$ is equivalent to \eqref{EN tau=0}. 
\end{rmk}

It is instructive first to present the proof of Proposition~\ref{Thm_EN t fixed}, which includes essential ideas for Theorem~\ref{Thm_EN} but requires fewer computations.

\begin{proof}[Proof of Proposition~\ref{Thm_EN t fixed}]

 Using the elementary binomial expansion
	$$
		\Big( \frac{1-\zeta x}{1-x} \Big)^{-\frac12} = \sum_{k=0}^\infty \frac{(2k-1)!!}{2^k k!} \Big((1-\zeta) \frac{x}{x-1} \Big)^k, \qquad \Big(\Big| (1-\zeta) \frac{x}{x-1} \Big|<1 \Big),
	$$
	we write $g_N$ in \eqref{g_N} as
	\begin{equation}
	 g_N(x)=\sqrt{ \frac{\pi}{1-x} } \,\, \sum_{k=0}^\infty (-1)^k \frac{(2k-1)!!}{2^k k!} \underset{\zeta=1}{\textup{Res}} \, \Big[ \frac{\zeta^{-\frac32} (2-\zeta)^{-1} }{ (\zeta-1)^{N-k-1} } \Big] \Big( \frac{x}{x-1} \Big)^k. 
	\end{equation}
Moreover, by \eqref{2F1 int rep} and \eqref{2F1 linear trans}, the residue in the right-hand side of this identity is computed as 
	\begin{align*}
 \underset{\zeta=1}{\textup{Res}} \, \Big[ \frac{\zeta^{-\frac32} (2-\zeta)^{-1} }{ (\zeta-1)^{N-k-1} } \Big] &=\frac{1}{2\sqrt{2}}+\frac{ \sqrt{\pi} }{ \Gamma(\frac12-N+k) } {}_2\textup{\textbf{F}}_1(1,-\tfrac12;N-k;\tfrac12).
	\end{align*}
Combining above equations with the identity
 $$
 \sum_{k=0}^{\infty} (-1)^k \frac{(2k-1)!!}{k!}\,\Big( \frac{x}{x-1} \Big)^k = \sqrt{\frac{1-x}{1-2x}}, \qquad (x<0),
 $$
	we obtain that for $x<0$, 
 \begin{align*}
		g_N(x)
		&=\frac{\sqrt{\pi}}{2\sqrt{2}} \frac{1}{\sqrt{1-2x}}
		\\
		&+\frac{\pi}{ \sqrt{1-x} }\, \sum_{k=0}^\infty \frac{(2k-1)!!}{2^k k!} \frac{ (-1)^k }{ \Gamma(\frac12-N+k) } {}_2\textup{\textbf{F}}_1(1,-\tfrac12;N-k;\tfrac12) \Big( \frac{x}{x-1} \Big)^k.
	\end{align*}
 
	Now let us recall that as $\lambda \to \infty$ ($\lambda \in \R$)
	\begin{equation}\label{2F1 asymp}
		{}_2 \textup{\textbf{F}}_1(a,b;c+\lambda;z) = \frac{1}{ \Gamma(c-b+\lambda) } \sum_{s=0}^{m-1} q_s(z) \frac{\Gamma(b+s)}{\Gamma(b)} \,\lambda^{-s-b}+O(\lambda^{-m-b}), 
	\end{equation}
	where $m$ is any positive integer, 	see \cite[Eq.(15.12.3)]{olver2010nist}.	 Here $q_0(z)=1$ and $q_s(z)$ when $s=1,2,\ldots$ are defined by the generating function
	\begin{equation}
		\Big( \frac{e^t-1}{t} \Big)^{b-1} e^{t (1-c) } (1-z+ze^{-t})^{-a} =\sum_{s=0}^\infty q_s(z) \, t^s.
	\end{equation}
Using \eqref{2F1 asymp}, we have 
	$$
	{}_2\textup{\textbf{F}}_1(1,-\tfrac12;N-k;\tfrac12)= \frac{\sqrt{N}}{\Gamma(N-k+\frac12)} \Big(\sum_{s=0}^{m-1} \wh{p}_{k,s} \frac{\Gamma(-\frac12+s)}{\Gamma(-\frac12)} N^{-s}+O( N^{-m} ) \Big),
	$$
	where $\wh{p}_{k,s}$ is defined by \eqref{p wh ks}. 
	Then by \eqref{Gamma reflection}, we obtain 
	\begin{align*}
		\frac{ (-1)^k }{ \Gamma(\frac12-N+k) } {}_2\textup{\textbf{F}}_1(1,-\tfrac12;N-k;\tfrac12) = -\frac{\sqrt{N}}{\pi}\,\Big(\sum_{s=0}^{m-1} \wh{p}_{k,s} \,\frac{ (2s-3)!! }{ 2^s N^{s}} +O(\frac{1}{N^{m}})\Big),
	\end{align*}
		which leads to
	 \begin{align*}
		g_N(x)
		&=\frac{\sqrt{\pi}}{2\sqrt{2}} \frac{1}{\sqrt{1-2x}}
		\\
		&- \sqrt{ \frac{N}{1-x} } \, \sum_{k=0}^\infty \frac{(2k-1)!!}{2^k k!} \wh{p}_{k,s} \Big( \frac{x}{x-1} \Big)^k \,\Big(\sum_{s=0}^{m-1} \frac{ (2s-3)!! }{ 2^s N^{s}} +O(\frac{1}{N^{m}})\Big) .
	\end{align*}
	Now Proposition~\ref{Thm_EN t fixed} follows from Lemma~\ref{Prop_gN}. 
\end{proof}

Now we begin to prove Theorem~\ref{Thm_EN}. For each $k=0,1,\ldots,$ and $N>1$, let
\begin{equation} \label{a_{N,k}}
	a_{N,k}(\zeta):= \frac{2}{N^{k+1}} \, \frac{ \zeta^{-k-2} (2-\zeta)^{-1} }{ (\zeta-1)^{N-k-1} } . 
\end{equation}
We will need the following evaluation.

\begin{lem}\label{Lem_aNK conv}
For $N$ even and $k=0,1,\ldots,N-2$, we have 
\begin{equation}
	\underset{\zeta=1}{\textup{Res}} \, \, a_{N,k}(\zeta) = (-1)^k \sum_{s=0}^{k} \frac{ q_{k,s} }{(k+1-s)!} \frac{ (-1)^s}{N^s},
\end{equation}
where $q_{k,s}$ is defined by \eqref{q_k,s}. 
\end{lem}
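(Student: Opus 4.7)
The plan is to reduce the residue at $\zeta=1$ to a Taylor coefficient at $u=\zeta-1=0$, evaluate that coefficient by a global residue argument in the $u$-plane, and then match the resulting rational expression against the generating function $G_k(t):=\bigl(te^{t}/(e^{t}-1)\bigr)^{k+2}\cdot 2/(e^{t}+1)=\sum_{s}q_{k,s}t^{s}$ via the transform $u=e^{t}-1$.

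Setting $u=\zeta-1$ in \eqref{a_{N,k}} and expanding the regular part at $\zeta=1$ yields
\[
\underset{\zeta=1}{\textup{Res}}\,a_{N,k}(\zeta)=\frac{2}{N^{k+1}}\,[u^{N-k-2}]\frac{1}{(1+u)^{k+2}(1-u)}.
\]
I would then apply Cauchy's residue theorem to $f(u):=\bigl[u^{N-k-1}(1+u)^{k+2}(1-u)\bigr]^{-1}$. Since $f(u)=O(|u|^{-(N+2)})$ at infinity, its three finite residues at $u=0,\,1,\,-1$ sum to zero. The simple pole at $u=1$ contributes $-1/2^{k+2}$. For the order-$(k+2)$ pole at $u=-1$, substitute $v=u+1$ and use $[v^{k+1}]g(v)=(-1)^{k+1}[v^{k+1}]g(-v)$ with $g(v)=\bigl((1-v)^{N-k-1}(2-v)\bigr)^{-1}$; the accumulated sign $(-1)^{N-k-1}(-1)^{k+1}=(-1)^{N}$ collapses to $1$ because $N$ is assumed even, so this residue equals $[u^{k+1}]\bigl((1+u)^{N-k-1}(2+u)\bigr)^{-1}$. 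Rearranging the sum-zero relation gives
\[
[u^{N-k-2}]\frac{1}{(1+u)^{k+2}(1-u)}=\frac{1}{2^{k+2}}-[u^{k+1}]\frac{1}{(1+u)^{N-k-1}(2+u)}.
\]

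To link the remaining coefficient to $G_k$, substitute $u=e^{t}-1$, so that $1+u=e^{t}$, $2+u=e^{t}+1$, and $du=e^{t}\,dt$. Factoring $(e^{t}-1)^{k+2}=t^{k+2}\bigl((e^{t}-1)/t\bigr)^{k+2}$ produces
\[
[u^{k+1}]\frac{1}{(1+u)^{N-k-1}(2+u)}=[t^{k+1}]\Big(\frac{te^{t}}{e^{t}-1}\Big)^{k+2}\frac{e^{-Nt}}{e^{t}+1}=\tfrac12[t^{k+1}]G_k(t)\,e^{-Nt}.
\]
Applying this identity at $N=0$ and comparing with the polynomial-division evaluation $[u^{k+1}]\tfrac{(1+u)^{k+1}}{2+u}=1/2^{k+2}$ (write $(1+u)^{k+1}=q(u)(2+u)+(-1)^{k+1}$) identifies $1/2^{k+2}=\tfrac12 q_{k,k+1}$, and therefore
\[
[u^{N-k-2}]\frac{1}{(1+u)^{k+2}(1-u)}=\tfrac12[t^{k+1}]G_k(t)(1-e^{-Nt}).
\]

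Finally, expanding $1-e^{-Nt}=-\sum_{m\ge1}(-Nt)^{m}/m!$ and reading off the $t^{k+1}$ coefficient gives $[t^{k+1}]G_k(t)(1-e^{-Nt})=-\sum_{s=0}^{k}q_{k,s}(-N)^{k+1-s}/(k+1-s)!$; multiplying by $2/N^{k+1}$ and collecting signs via $-(-1)^{k+1-s}=(-1)^{k}(-1)^{s}$ yields the formula stated in the lemma. I expect the main obstacle to be the contour argument, specifically the residue at $u=-1$ and the verification that the parity assumption $N$ even is exactly what absorbs the sign $(-1)^{N}$; the substitution $u=e^{t}-1$, although the conceptual engine of the proof, is a routine calculation once one notices that $1+u$ and $2+u$ become the factors $e^{t}$ and $e^{t}+1$ appearing in $G_k$.
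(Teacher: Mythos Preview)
Your proof is correct and takes a genuinely different route from the paper. The paper expands $\zeta^{-k-2}$ at $\zeta=1$, recognizes the resulting finite sum as a value of ${}_2F_1(1,N+1;N-k;-1)$, applies Pfaff's transformation to get ${}_2F_1(1,-k-1;N-k;\tfrac12)$, and then invokes the large-parameter expansion \eqref{2F1 asymp} (which is exact here since $b=-k-1$ is a nonpositive integer); that expansion is precisely where the generating function defining $q_{k,s}$ enters. A leftover term $1/(2N)^{k+1}$ is then absorbed by separately computing $q_{k,k+1}=2^{-k-1}$ from the definition. Your argument instead uses a global residue identity in the $u$-plane to swap the coefficient at $u=0$ for one at $u=-1$ (and this is exactly where the hypothesis ``$N$ even'' is consumed, via the sign $(-1)^N$), and then the change of variable $u=e^{t}-1$ makes the generating function $G_k(t)$ appear directly; the same substitution at $N=0$ reproduces $q_{k,k+1}=2^{-k-1}$. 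Your approach is self-contained and avoids hypergeometric identities, making the link to $G_k$ transparent; the paper's approach fits more naturally into the special-functions framework used elsewhere in the article and recycles the standard expansion \eqref{2F1 asymp}. Both routes compute the same constant $q_{k,k+1}$ and cancel the same correction term, just in a different order.
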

\begin{proof}
To compute the residue of $a_{N,k}(\zeta)$ at $\zeta=1$, we first expand $\zeta^{-k-2}$ around $\zeta=1$ as follows:
for $|\zeta-1|<1$, 
$$
\zeta^{-k-2}=\sum_{l=0}^{\infty} (-1)^l \frac{(k+l+1)!}{(k+1)! \, l!} (\zeta-1)^l.
$$
Combining this expansion with the definition \eqref{2F1 Gauss series} of ${}_2F_1$, we obtain 
\begin{align}
\begin{split}
	&\quad \underset{\zeta=1}{\textup{Res}} \, \, a_{N,k}(\zeta) = \frac{1}{(k+1)!} \frac{2}{N^{k+1}} \sum_{l=0}^{N-k-2} (-1)^l \frac{(k+l+1)!}{ \, l!}
	\\
	&=\frac{(-1)^k}{(k+1)!} \Big[ (-1)^k \frac{(k+1)!}{(2N)^{k+1}} + \frac{2}{N^{k}} \frac{ (N-1)!}{(N-k-1)!} {}_2 F_1(1,N+1;N-k;-1) \Big]
	\\
	&=\frac{(-1)^k}{(k+1)!} \Big[ (-1)^k \frac{(k+1)!}{(2N)^{k+1}} + \frac{1}{N^{k}} \frac{ (N-1)!}{(N-k-1)!} {}_2 F_1(1,-k-1;N-k;\tfrac12) \Big].
\end{split}
\end{align}
Here the last identity follows from Pfaff's transformation
\begin{equation} \label{Pfaff trans}
	{}_2F_1(a,b;c;z)=(1-z)^{-a} {}_2F_1(a,c-b;c;\tfrac{z}{z-1}).
\end{equation}
Moreover, it follows from \eqref{2F1 asymp} and 
$$
\frac{\Gamma(-k-1+s)}{\Gamma(-k-1)}=(-1)^s \frac{(k+1)!}{(k+1-s)!}
$$
(where we treat the case $k \in \mathbb{N}$ in the left-hand side as a removable singularity) that 
$$
 \frac{1}{N^{k}} \frac{ (N-1)!}{(N-k-1)!} {}_2 F_1(1,-k-1;N-k;\tfrac12) = \sum_{s=0}^{k+1} q_{k,s} \frac{(k+1)!}{(k+1-s)!} \frac{ (-1)^s}{N^s}.
$$
Therefore we obtain 
\begin{align*}
\underset{\zeta=1}{\textup{Res}} \, \, a_{N,k}(\zeta) &= \frac{1}{(2N)^{k+1}}+ (-1)^k \sum_{s=0}^{k+1} \frac{ q_{k,s} }{(k+1-s)!} \frac{ (-1)^s}{N^s}.
\end{align*}
Finally, using the definition \eqref{q_k,s}, we compute $q_{k,k+1}:$ 
$$
q_{k,k+1}=\underset{z=0}{\textup{Res}} \Big[ \Big( \frac{e^z}{e^z-1} \Big)^{k+2} \frac{2}{1+e^z} \Big] =2^{-k-1}.
$$
We finish the proof.
\end{proof}

Let us write
\begin{align}
\begin{split}
\label{d_s(alpha)}
d_s(\alpha)&:=\frac{(-1)^s }{2} \sum_{k=s}^\infty q_{k,s} \, \frac{(-1)^k}{(k+1-s)!} \frac{(2k-1)!!}{k!} \Big( \frac{\alpha^2}{2} \Big)^{k}.
\end{split}
\end{align}
Then we have the following. 

\begin{lem} \label{Lem_Bessel poly}
We have 
\begin{align}
d_0(\alpha)&= \tfrac{1}{2} e^{-\alpha^2/2} [ I_0( \tfrac{\alpha^2}{2} )+I_1( \tfrac{\alpha^2}{2} ) ],
\\
d_1(\alpha)&=\tfrac14+ \tfrac{1}{8}e^{-\alpha^2/2} [ (\alpha^2-2) I_0( \tfrac{\alpha^2}{2} )-\alpha^2 I_1( \tfrac{\alpha^2}{2} ) ].
\end{align}
\end{lem}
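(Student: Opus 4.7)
The plan is to first read off the low-order coefficients $q_{k,0}$ and $q_{k,1}$ from the generating function \eqref{q_k,s}, substitute them into \eqref{d_s(alpha)}, and then evaluate the resulting power series in the variable $u:=\alpha^{2}/2$ in closed form.

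The coefficient $q_{k,0}$ equals the value of the generating function at $t=0$, namely $q_{k,0}=1$. For $q_{k,1}$ I would use the Taylor expansions $\frac{t e^t}{e^t-1}=1+\tfrac{t}{2}+O(t^2)$ and $\frac{2}{e^t+1}=1-\tfrac{t}{2}+O(t^2)$, raise the first to the $(k+2)$-th power, and read off the coefficient of $t$ in the product to obtain $q_{k,1}=(k+1)/2$. Substituting $q_{k,0}=1$ into \eqref{d_s(alpha)} gives
\begin{equation*}
d_0(\alpha)=\tfrac12\sum_{k\ge 0}\frac{(-1)^k(2k-1)!!}{k!\,(k+1)!}\,u^k,
\end{equation*}
which by the series-to-integral computation already carried out in Remark~\ref{rem:c_alpha} is precisely $c(\alpha)/2$; together with \eqref{c(alpha)} this yields the asserted formula for $d_0$.

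For $d_1$, substituting $q_{k,1}=(k+1)/2$ and isolating the $k=0$ contribution (which contributes $+1/4$) reduces the problem to evaluating
\begin{equation*}
T(u):=\sum_{k\ge 0}\frac{(k+1)(-1)^k(2k-1)!!}{(k!)^2}\,u^k=\Big(1+u\tfrac{d}{du}\Big)g(u),\qquad g(u):=\sum_{k\ge 0}\frac{(-1)^k(2k-1)!!}{(k!)^2}\,u^k,
\end{equation*}
in closed form, since then $d_1(\alpha)=\tfrac14-T(u)/4$. The key step is the identification $g(u)=e^{-u}I_0(u)$. To prove this, I would rewrite $g$, using $(2k-1)!!=(2k)!/(2^k k!)$ and $(2k)!=4^k(1/2)_k\,k!$, as $g(u)=\sum_k (1/2)_k\,(-2u)^k/(k!)^2={}_1F_1(1/2;1;-2u)$, then apply Kummer's transformation ${}_1F_1(a;b;z)=e^{z}\,{}_1F_1(b-a;b;-z)$ together with the classical identity $I_0(u)=e^{-u}\,{}_1F_1(1/2;1;2u)$ (see \cite[\S 13.6]{olver2010nist}). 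Once $g(u)=e^{-u}I_0(u)$ is established, $I_0'=I_1$ and the product rule give $T(u)=e^{-u}[(1-u)I_0(u)+u\,I_1(u)]$, and substituting $2u=\alpha^2$ into $d_1(\alpha)=\tfrac14-T(u)/4$ yields the claimed expression.

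The main obstacle is the closed-form identification $g(u)=e^{-u}I_0(u)$; once it is in hand, all remaining steps are routine differentiation and bookkeeping. As a self-contained alternative avoiding Kummer's transformation, one could verify the identity directly by Cauchy-multiplying the series for $e^{-u}$ and $I_0(u)$ and matching coefficients via a binomial summation, but the hypergeometric route is cleaner and consistent in style with the earlier proofs in Section~\ref{Section_expectation}.
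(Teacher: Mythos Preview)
Your proposal is correct and follows essentially the same approach as the paper: compute $q_{k,0}=1$ and $q_{k,1}=(k+1)/2$ from the generating function and then identify the resulting power series in $u=\alpha^2/2$ with modified Bessel expressions. The only cosmetic difference is that the paper states the two series identities $e^{-u}I_0(u)=\sum_{k\ge0}\frac{(-1)^k(2k-1)!!}{(k!)^2}u^k$ and $e^{-u}I_1(u)=\sum_{k\ge1}\frac{(-1)^{k+1}(2k-1)!!}{(k-1)!(k+1)!}u^k$ directly and combines them linearly for both $d_0$ and $d_1$, whereas you invoke Remark~\ref{rem:c_alpha} for $d_0$ and, for $d_1$, establish only the $I_0$ identity via ${}_1F_1$ and Kummer and then differentiate using $I_0'=I_1$.
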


\begin{proof}
Using \eqref{q_k,s}, we first compute
\begin{equation} \label{q k 123}
	q_{k,0}=1, \qquad q_{k,1}=\frac{k+1}{2}.
\end{equation}
By \eqref{I nu}, we have
\begin{align} 
e^{-\alpha^2/2} I_0( \tfrac{\alpha^2}{2} )&=\sum_{k=0}^\infty \frac{(2k-1)!!}{(k!)^2} (-1)^k \Big(\frac{\alpha^2}{2}\Big)^k, \label{I0 exp}
\\
e^{-\alpha^2/2} I_1( \tfrac{\alpha^2}{2} )&=\sum_{k=0}^\infty \frac{(2k-1)!!}{(k-1)!(k+1)!} (-1)^{k+1}\Big(\frac{\alpha^2}{2}\Big)^k. \label{I1 exp}
\end{align}
Then the lemma follows immediately from \eqref{I0 exp} and \eqref{I1 exp}. 
\end{proof}

We are now ready to prove Theorem~\ref{Thm_EN}.

\begin{proof}[Proof of Theorem~\ref{Thm_EN}]
By Lemma~\ref{Prop_gN}, we have
\begin{equation} \label{EN g h}
E_{N,\tau_N}=N \, \sqrt{4-\tfrac{2\alpha^2}{N}} \,\frac{1}{\alpha \sqrt{\pi}} \frac{1}{\sqrt{N}}\,g_N( 1-\tfrac{N}{\alpha^2} )
\end{equation}
and
\begin{align}
	\begin{split}
		\frac{g_N(x)}{\sqrt{N}}&=\sqrt{\frac{\pi}{N}} \,\, \underset{\zeta=1}{\textup{Res}} \, \Big[ \frac{\zeta^{-\frac32} (2-\zeta)^{-1} (1-\zeta x)^{-\frac12}}{ (\zeta-1)^{N-1} } \Big].
	\end{split}
\end{align}

Let us compute the power series expansion of $\frac{1}{\sqrt{N}}g_N( 1-\tfrac{N}{\alpha^2} ).$ 
For this computation, we first expand $(1-\zeta x)^{-\frac12}$ around $\zeta =1$ using the binomial theorem. More precisely, for $x=1-\tfrac{N}{\alpha^2}$, we have
\begin{equation}
	(1-\zeta x)^{-\frac12} = 2\sum_{k=0}^\infty (\zeta-1)^k (\zeta N)^{-k-\frac12} \frac{(2k)!}{(k!)^2 } \Big( \frac{\alpha}{2} \Big)^{2k+1},\quad (|\zeta-1| < |\zeta|\frac{N}{\alpha^2}).
\end{equation}
Therefore we obtain
\begin{equation}
\frac{1}{\sqrt{N}}\,g_N( 1-\tfrac{N}{\alpha^2} )=\sqrt{\pi}\sum_{k=0}^\infty 	\underset{\zeta=1}{\textup{Res}} \, \, a_{N,k}(\zeta) \frac{(2k)!}{(k!)^2 } \Big( \frac{\alpha}{2} \Big)^{2k+1},
\end{equation}
where $a_{N,k}$ is given by \eqref{a_{N,k}}.

By Lemma~\ref{Lem_aNK conv}, we have 
\begin{align*}
&\quad \frac{1}{\alpha \sqrt{\pi}} \frac{1}{\sqrt{N}}\,g_N( 1-\tfrac{N}{\alpha^2} )=\frac12\sum_{k=0}^\infty (-1)^k\frac{(2k-1)!!}{k!} \Big( \frac{\alpha^2}{2} \Big)^{k} \sum_{s=0}^{k} \frac{ q_{k,s} }{(k+1-s)!} \frac{ (-1)^s}{N^s}. 
\end{align*}
Rearranging the terms, for any positive integer $m \ge 2$,
\begin{align}
\frac{1}{\alpha \sqrt{\pi}} \frac{1}{\sqrt{N}}\,g_N( 1-\tfrac{N}{\alpha^2} )
&=\sum_{s=0}^{m-1} \frac{d_s(\alpha)}{N^s}+O(\frac{1}{N^m}),
\end{align}
where $d_s(\alpha)$ is given by \eqref{d_s(alpha)}. Now it follows from \eqref{EN g h} and the binomial expansion 
$$ \sqrt{4-\tfrac{2\alpha^2}{N}}=2-\sum_{s=1}^{m-1} \frac{(2s-3)!!\, \alpha^{2s} }{2^{2s-1} s!} \, \frac{1}{N^s}+O(\frac{1}{N^m}) $$
that 
\begin{equation}
E_{N,\tau_N}=N c(\alpha)+c_0(\alpha)+\frac12+\sum_{l=1}^{m-1} \frac{c_l(\alpha)}{N^l}+O(\frac{1}{N^m}).
\end{equation}
Here $c(\alpha)=2d_0(\alpha)$ and $c_l(\alpha)$'s are given by 
\begin{equation}\label{c0}
	c_0(\alpha):=2d_{1}(\alpha)-\frac{\alpha^2}{2}d_0(\alpha)-\frac12
\end{equation}
and for $l \ge 1$, 
$$
c_l(\alpha)=2d_{l+1}(\alpha)-\alpha \sum_{k=0}^{l} \frac{(2k-1)!! }{ (k+1)! } \Big( \frac{\alpha}{2} \Big)^{2k+1} d_{l-k}(\alpha).
$$
Now Lemma~\ref{Lem_Bessel poly} completes the proof. 

\end{proof}

\section{Distributions of real eigenvalues} \label{Section_distributions}

This section is devoted to proving Theorem~\ref{Thm_density}. 

It was obtained in \cite{MR2430570} that the density $\rho_N\equiv \rho_{N,\tau_N}$ of real eigenvalues can be expressed in terms of the Hermite polynomial
$H_k(x):=(-1)^k e^{x^2} \tfrac{d^k}{dx^k}e^{-x^2}$ as 
\begin{align}\label{rho_N}
	\begin{split}
		\rho_{N}(x)&=\rho_N^1(x)+\rho_{N}^2(x), \qquad \rho_N^{j}=\frac{1}{E_{N,\tau_N}} \textbf{R}_{N}^{j}(x), \qquad (j=1,2)
	\end{split}
\end{align}
where 
\begin{align}
	\textbf{R}_{N}^1(x)&:=\sqrt{\frac{N}{2\pi}} e^{ -\frac{N}{ 1+\tau_N } x^2 } \sum_{k=0}^{N-2} \frac{(\tau_N/2)^k}{k!} H_k( \sqrt{\tfrac{N}{2\tau_N }} x)^2, \label{RN1}
	\\
	\begin{split}
		\textbf{R}_{N}^2(x)&:= \frac{1}{\sqrt{2\pi}} \frac{(\tau_N/2)^{N-\frac32} }{ 1+\tau_N } \frac{ N }{ (N-2)! } e^{ -\frac{N}{2(1+\tau_N)} x^2 } H_{N-1}( \sqrt{\tfrac{N}{2\tau_N }} x ) 
		\\
		&\quad \times \int_{0}^{x} e^{ -\frac{N}{2(1+\tau_N)} u^2 } H_{N-2} ( \sqrt{\tfrac{N}{2\tau_N }} u )\,du. \label{RN2}
	\end{split}
\end{align}
Here we use a different normalization so that the limiting empirical distribution has a compact support, cf. \eqref{Ellipse}. 

The overall strategy to derive the large-$N$ limit \eqref{Goal2} of $\rho_N$ is as follows:
\begin{itemize}
 \item we first compute the large-$N$ limit of $\rho_N(0)$ (Lemma~\ref{Lem_density 0}) by virtue of some basic properties of hypergeometric functions;
 \smallskip 
 \item using a version of the Christoffel-Darboux formula (Lemma~\ref{Lem_Hermite}) we obtain an integral representation of $\rho_N^1$ (Lemma~\ref{Prop_rhoN1 rep}), where the previous step is utilized to determine the integration constant;
 \smallskip 
 \item we derive the large-$N$ limit of the integral representation using the Plancherel-Rotach asymptotic formula for Hermite polynomials \eqref{PR in} and some asymptotic analysis on the associated oscillatory integrals (Lemma~\ref{Lem_Oscill}).
\end{itemize}

We begin with the following lemma, which gives Theorem~\ref{Thm_density} for the specific value $x=0.$

\begin{lem} \label{Lem_density 0}
We have 
\begin{align}
	\lim_{ N \to \infty } 	\rho_{N}(0) = 	\frac{1}{c(\alpha)}\frac{1}{2\alpha \sqrt{\pi}} \erf(\alpha).
\end{align}	
\end{lem}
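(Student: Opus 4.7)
The plan is as follows. Since the integral $\int_0^x e^{-\frac{N}{2(1+\tau_N)}u^2}H_{N-2}(\sqrt{N/(2\tau_N)}\,u)\,du$ in \eqref{RN2} vanishes at $x=0$, we immediately have $\mathbf{R}_N^2(0)=0$, and hence $\rho_N(0)=\mathbf{R}_N^1(0)/E_{N,\tau_N}$. Using the classical values $H_k(0)=0$ for odd $k$ and $H_{2m}(0)=(-1)^m(2m)!/m!$, only the even terms $k=2m$ contribute to \eqref{RN1} at $x=0$, which collapses to
\begin{equation}
\mathbf{R}_N^1(0) = \sqrt{\frac{N}{2\pi}} \sum_{m=0}^{N/2-1} \binom{2m}{m}\Big(\frac{\tau_N}{2}\Big)^{\!2m} = \sqrt{\frac{N}{2\pi}} \sum_{m=0}^{N/2-1} \frac{(1/2)_m}{m!}\,\tau_N^{2m}.
\end{equation}
This is a truncation of the binomial series for $(1-\tau_N^2)^{-1/2}$, so the problem reduces to a sharp evaluation of the discarded tail (and, equivalently, of the truncated head) as $N\to\infty$.

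For the asymptotic analysis, substitute $\tau_N=1-\alpha^2/N$. By Stirling, $\binom{2m}{m}(\tau_N/2)^{2m}=(1+o(1))\,\tau_N^{2m}/\sqrt{\pi m}$ as $m\to\infty$, while for $m=Nu$ with $u\in(0,1/2]$ one has $\tau_N^{2m}\to e^{-2\alpha^2 u}$ uniformly. Recognizing the sum as a Riemann sum of step $1/N$ for the integrand $u\mapsto e^{-2\alpha^2 u}/\sqrt{u}$, I would obtain
\begin{equation}
\frac{1}{\sqrt{N}}\sum_{m=0}^{N/2-1}\binom{2m}{m}\Big(\frac{\tau_N}{2}\Big)^{\!2m} \;\longrightarrow\; \frac{1}{\sqrt{\pi}}\int_0^{1/2}\frac{e^{-2\alpha^2 u}}{\sqrt{u}}\,du.
\end{equation}
The substitution $v=2\alpha^2 u$, together with the standard identity $\int_0^{\alpha^2} v^{-1/2}e^{-v}\,dv=\sqrt{\pi}\,\erf(\alpha)$, identifies the right-hand side as $\erf(\alpha)/(\alpha\sqrt{2})$. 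Thus $\mathbf{R}_N^1(0)=\tfrac{N\,\erf(\alpha)}{2\alpha\sqrt{\pi}}(1+o(1))$, and dividing by $E_{N,\tau_N}=Nc(\alpha)+O(1)$ from Theorem~\ref{Thm_EN} gives the stated limit.

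The main obstacle is justifying the Riemann-sum approximation uniformly in $\alpha$, because the summand behaves like $m^{-1/2}$ near $m=0$ (where Stirling is not yet accurate) and the integrand has an integrable singularity at $u=0$. I would handle this by splitting off a fixed cutoff $m\le M$, whose contribution is $O(1)=o(\sqrt{N})$, and applying dominated convergence to the remaining range using the elementary bound $\tau_N^{2m}\le e^{-2\alpha^2 m/N}(1+C\alpha^2 m/N^2)$. A cleaner alternative—more in line with the paper's reference to "basic properties of hypergeometric functions"—is to insert the integral representation $\binom{2m}{m}=\frac{1}{\pi}\int_{-2}^{2}t^{2m}(4-t^2)^{-1/2}\,dt$, swap sum and integral, and evaluate the resulting geometric sum in closed form as $[1-(\tau_N t/2)^N]/[1-(\tau_N t/2)^2]$; an $N\to\infty$ analysis localized near $t=\pm 2$ then extracts the $\erf(\alpha)$ directly by dominated convergence after rescaling.
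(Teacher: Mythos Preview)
Your argument is correct. The reduction to $\rho_N(0)=\mathbf{R}_N^1(0)/E_{N,\tau_N}$ via $\mathbf{R}_N^2(0)=0$, the evaluation $\mathbf{R}_N^1(0)=\sqrt{N/(2\pi)}\sum_{m=0}^{N/2-1}\binom{2m}{m}(\tau_N/2)^{2m}$, and the Riemann-sum passage to $\frac{1}{\sqrt{\pi}}\int_0^{1/2}u^{-1/2}e^{-2\alpha^2 u}\,du=\erf(\alpha)/(\alpha\sqrt{2})$ are all fine; your handling of the $m^{-1/2}$ singularity by cutting off $m\le M$ is the standard and sufficient fix.

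The paper takes a different route at the asymptotic step. After the same Hermite-number reduction, it writes the truncated binomial sum as $(1-\tau_N^2)^{-1/2}$ minus a tail, expresses that tail in closed form as a regularized ${}_2\mathbf{F}_1$, and then applies a chain of hypergeometric transformations (Pfaff's transformation, a linear transform from \cite{olver2010nist}, and finally the Euler integral representation) to extract the limit $\erf(\alpha)$. Your approach is more elementary and self-contained---no special-function identities beyond Stirling and the incomplete gamma integral---whereas the paper's approach is more algebraic and fits into the ${}_2F_1$ framework used throughout Sections~\ref{Section_expectation} and~\ref{Section_distributions}. Your alternative via the arcsine moment representation $\binom{2m}{m}=\pi^{-1}\int_{-2}^{2}t^{2m}(4-t^2)^{-1/2}\,dt$ is essentially a third route and would also work cleanly.
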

\begin{proof}
Using Hermite numbers $$H_k(0)=\frac{\sqrt{\pi}}{\Gamma(\frac{1-k}{2})} \, 2^k$$
and \eqref{2F1 Gauss series}, we have 
\begin{equation*}
\rho_{N}(0)=\frac{1}{E_{N,\tau_N}} \Big(\frac{N}{2\pi}\Big)^{\frac12} \Big[ (1-\tau_N^2)^{-\frac12}-\tau_N^N \, \frac{ \Gamma(\frac{N+1}{2})}{\sqrt{\pi}} {}_2\textup{\textbf{F}}_1( 1,\tfrac{N+1}{2};\tfrac{N}{2}+1;\tau_N^2 ) \Big]. 
\end{equation*}
By \eqref{Pfaff trans}, we have
$$
{}_2\textup{\textbf{F}}_1( 1,\tfrac{N+1}{2};\tfrac{N}{2}+1;\tau_N^2 ) =(1-\tau_N^2) ^{-1} {}_2\textup{\textbf{F}}_1( 1,\tfrac12;\tfrac{N}{2}+1; \tfrac{\tau_N^2}{\tau_N^2-1} ).
$$
This leads to 
$\rho_{N}(0)=a_N-b_N,$
where
\begin{align}
	a_N&=\frac{1}{E_{N,\tau_N}} \Big(\frac{N}{2\pi(1-\tau_N^2)}\Big)^{\frac12}, 
	\\
	b_N&=\frac{1}{E_{N,\tau_N}} \Big(\frac{N}{2\pi}\Big)^{\frac12} \frac{\tau_N^N}{1-\tau_N^2} \, \frac{ \Gamma(\frac{N+1}{2})}{\sqrt{\pi}} {}_2\textup{\textbf{F}}_1( 1,\tfrac12;\tfrac{N}{2}+1; \tfrac{\tau_N^2}{\tau_N^2-1} ) . 
\end{align}

Note here that by Theorem~\ref{Thm_EN}, we have 
\begin{equation}
\lim_{N\to\infty} a_N = \frac{1}{c(\alpha)} \frac{1}{2\alpha \sqrt{\pi }}. 
\end{equation}
Therefore it suffices to show that
\begin{equation}
\lim_{N\to\infty}	b_N =\frac{1}{c(\alpha)} \frac{1}{2\alpha \sqrt{\pi }} \erfc(\alpha) .
\end{equation}
Notice also that Theorem~\ref{Thm_EN} gives rise to 
$$
b_N \sim \frac{1}{c(\alpha)} \frac{1}{2\alpha \sqrt{\pi}} \Big( \frac{N}{2\pi} \Big)^{\frac12} \Gamma( \tfrac{N+1}{2} ) \frac{e^{-\alpha^2}}{\alpha} {}_2\textup{\textbf{F}}_1( 1,\tfrac12;\tfrac{N}{2}+1; \tfrac{\tau_N^2}{\tau_N^2-1} ) .
$$
Thus we need to show 
\begin{equation}
\lim_{ N \to \infty } \Big( \frac{N}{2\pi} \Big)^{\frac12} \Gamma( \tfrac{N+1}{2} ) {}_2\textup{\textbf{F}}_1( 1,\tfrac12;\tfrac{N}{2}+1; -\tfrac{N}{2\alpha^2} ) =\alpha \, e^{\alpha^2} \erfc(\alpha). 
\end{equation}

It follows from \cite[(15.4.6), (15.8.2)]{olver2010nist} that 
\begin{align*}
&\quad \Big( \frac{N}{2\pi} \Big)^{\frac12} \Gamma( \tfrac{N+1}{2} ) {}_2\textup{\textbf{F}}_1( 1,\tfrac12;\tfrac{N}{2}+1; -\tfrac{N}{2\alpha^2} )
\\
&= \alpha \Big( 1+\frac{2\alpha^2}{N} \Big)^{ \frac{N-1}{2} }
 -\sqrt{ \frac{2}{N} } \frac{ \Gamma( \tfrac{N+1}{2} ) }{ \Gamma(\frac{N}{2}) } \alpha^2 {}_2\textup{\textbf{F}}_1( 1-\tfrac{N}{2},1;\tfrac{3}{2}; -\tfrac{2\alpha^2}{N} ).
\end{align*}
Then by Stirling's formula, we have 
\begin{equation}
\lim_{ N \to \infty } \Big( \frac{N}{2\pi} \Big)^{\frac12} \Gamma( \tfrac{N+1}{2} ) {}_2\textup{\textbf{F}}_1( 1,\tfrac12;\tfrac{N}{2}+1; -\tfrac{N}{2\alpha^2} ) = \alpha \, e^{\alpha^2}-\alpha^2 \lim_{ N \to \infty } {}_2\textup{\textbf{F}}_1( 1-\tfrac{N}{2},1;\tfrac{3}{2}; -\tfrac{2\alpha^2}{N} ).
\end{equation}
Using the Euler integral formula (see e.g. \cite[Eq.(15.6.1)]{olver2010nist}): for $c>b>0$, 
$$
{}_2\textup{\textbf{F}}_1(a,b,c,z)=\frac{1}{\Gamma(b)\Gamma(c-b)} \int_0^1 \frac{ t^{b-1} (1-t)^{c-b-1} }{ (1-z t)^a }\,dt,
$$
we have
\begin{align*}
\lim_{N\to \infty} {}_2\textup{\textbf{F}}_1( 1-\tfrac{N}{2},1;\tfrac{3}{2}; -\tfrac{2\alpha^2}{N} )&=\lim_{N\to \infty} \frac{1}{\sqrt{\pi}} \int_0^1 \frac{1}{\sqrt{1-t}} \Big( 1+\frac{2\alpha^2}{N}t \Big)^{ \frac{N}{2}-1 }\,dt
\\
&= \frac{1}{\sqrt{\pi}} \int_0^1 \frac{e^{\alpha^2 t}}{\sqrt{1-t}} \,dt=\frac{ e^{\alpha^2} }{ \alpha } \erf(\alpha).
\end{align*}
This completes the proof. 
\end{proof}

We shall use the following version of the Christoffel--Darboux identity for Hermite polynomials, see \cite[Lemma 4.1]{AB20} and \cite[Proposition 2.3]{lee2016fine}.

\begin{lem}\label{Lem_Hermite}
	For any $\tau \in (0,1]$, let
	\begin{align} \label{F tau}
		F_N(x):=\sum_{k=0}^{N-2} \frac{(\tau/2)^k}{k!} H_k(x) ^2.
	\end{align}
	Then for any $N \ge 2$, we have 
	\begin{align} \label{Mehler fin N}
		F'_N(x)=\frac{4\tau\,x}{1+\tau}F_N(x)-\frac{4(\tau/2)^{N-1}}{1+\tau} \frac{ H_{N-2}(x) H_{N-1}(x)}{(N-2)!}.
	\end{align}
\end{lem}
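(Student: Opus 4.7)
The plan is to reduce the identity to two elementary properties of the Hermite polynomials: the derivative formula $H_k'(x)=2k\,H_{k-1}(x)$ and the three-term recurrence $H_k(x)=2x\,H_{k-1}(x)-2(k-1)H_{k-2}(x)$. No serious obstacle is expected; the whole argument is direct manipulation, and the only step requiring care is tracking the boundary contributions produced by truncating the sum at $k=N-2$.

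First, differentiate $F_N$ termwise and apply $H_k'=2k\,H_{k-1}$ to obtain
$$
F_N'(x)=4\sum_{k=1}^{N-2}\frac{(\tau/2)^k}{(k-1)!}H_k(x)H_{k-1}(x)=:4\,S_N(x).
$$
Next, substitute the recurrence into the factor $H_k$ inside $S_N$ and split the result as $S_N=2x\,T_N-2\,U_N$, where $T_N=\sum_{k=1}^{N-2}\frac{(\tau/2)^k}{(k-1)!}H_{k-1}^2$ and $U_N=\sum_{k=2}^{N-2}\frac{(\tau/2)^k}{(k-2)!}H_{k-2}H_{k-1}$. Re-indexing via $j=k-1$: the sum $2x\,T_N$ becomes $\tau x$ times $F_N$ minus a single boundary term at $k=N-2$, and $2\,U_N$ becomes $\tau$ times $S_N$ itself minus an analogous boundary term. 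Collecting these pieces yields a linear relation
$$
(1+\tau)S_N(x)=\tau x\,F_N(x)-\tau\,\frac{(\tau/2)^{N-2}}{(N-2)!}\Bigl[x\,H_{N-2}(x)^2-(N-2)H_{N-3}(x)H_{N-2}(x)\Bigr].
$$

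The final step is to simplify the boundary bracket. Factoring out $H_{N-2}$ and using the recurrence in the form $2x\,H_{N-2}(x)=H_{N-1}(x)+2(N-2)H_{N-3}(x)$, the bracket collapses to $\tfrac12 H_{N-1}(x)H_{N-2}(x)$. Dividing through by $1+\tau$ and multiplying by $4$ then recovers the stated identity $F_N'(x)=\frac{4\tau x}{1+\tau}F_N(x)-\frac{4(\tau/2)^{N-1}}{1+\tau}\frac{H_{N-2}(x)H_{N-1}(x)}{(N-2)!}$. As a sanity check, formally sending $N\to\infty$ together with Mehler's formula $\sum_{k\ge 0}\frac{(\tau/2)^k}{k!}H_k(x)^2=(1-\tau^2)^{-1/2}\exp\bigl(\tfrac{2\tau x^2}{1+\tau}\bigr)$ gives exactly $F_\infty'=\tfrac{4\tau x}{1+\tau}F_\infty$, consistent with the truncation correction vanishing in the limit.
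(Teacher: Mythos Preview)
Your proof is correct. Both your argument and the paper's rest on the same two ingredients (the derivative rule $H_k'=2kH_{k-1}$ and the three-term recurrence), but the organization differs: the paper proceeds by induction on $N$, verifying the case $N=2$ and then showing that adding the single term $\frac{(\tau/2)^{N-1}}{(N-1)!}(H_{N-1}^2)'$ to $F_N'$ shifts the boundary correction from $H_{N-2}H_{N-1}$ to $H_{N-1}H_N$; you instead manipulate the full finite sum directly, re-indexing to express $S_N$ in terms of $F_N$ and $S_N$ itself and collecting the two boundary terms that fall out. Your route makes the telescoping structure explicit in one pass, while the paper's induction is slightly shorter on the page; neither buys anything the other does not, and the simplification of the boundary bracket via $2xH_{N-2}=H_{N-1}+2(N-2)H_{N-3}$ is the same closing step in both.
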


For the reader's convenience, let us present the proof of Lemma~\ref{Lem_Hermite} here.

\begin{proof}[Proof of Lemma~\ref{Lem_Hermite}]
For $N=2$, it follows from $H_0(x)=1,$ $H_1(x)=2x$ that the identity \eqref{Mehler fin N} trivially holds. 
By the differentiation rule 
$$
H_{N-1}'(x)=2(N-1)H_{N-2}(x)
$$
and the three-term recurrence relation
$$
H_N(x)=2xH_{N-1}(x)-H_{N-1}'(x)
$$
of the Hermite polynomials, we have
$$
\tau x H_{N-1}(x) +(N-1) H_{N-2}(x)-\frac{1+\tau}{2}H_{N-1}'(x) = \frac{\tau}{2} H_N(x). 
$$ 
Using this, the induction argument gives 
\begin{align*}
F_{N+1}'(x)&=F_N'(x)+\frac{(\tau/2)^{N-1}}{(N-1)!} \Big(H_{N-1}(x)^2\Big)' 
\\
&=\frac{4\tau\,x}{1+\tau}F_{N+1}(x)
-\frac{4(\tau/2)^{N}}{1+\tau}\frac{H_{N-1}(x) H_N(x)}{(N-1)!},
\end{align*}
which completes the proof. 
\end{proof}

Using Lemma~\ref{Lem_Hermite}, we obtain an integral representation of $\rho_N^1$, a key ingredient for the latter asymptotic analysis. 

\begin{lem} \label{Prop_rhoN1 rep}
For any $\tau_N \in (0,1]$, we have 
\begin{align}
	\begin{split}
		\rho_{N}^1(x)=\rho_{N}^1(0)&-\frac{1}{E_{N,\tau_N} }\sqrt{\frac{2}{\pi}} \frac{(\tau_N/2)^{N-\frac32}}{1+\tau_N} \frac{N}{(N-2)!}
		\\
		&\quad \times \int_{0}^{x} e^{ -\frac{N}{ 1+\tau_N } u^2 } H_{N-2}( \sqrt{\tfrac{N}{2\tau_N }} u)H_{N-1}( \sqrt{\tfrac{N}{2\tau_N }} u)\,du.
	\end{split}
\end{align}
\end{lem}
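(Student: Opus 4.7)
The plan is to apply the fundamental theorem of calculus to $\textbf{R}_N^1$ after recognizing Lemma~\ref{Lem_Hermite} as exactly what collapses its derivative into the bilinear form $H_{N-2}H_{N-1}$. Write $y = \sqrt{N/(2\tau_N)}\,x$ and let $F_N(y) = \sum_{k=0}^{N-2} (\tau_N/2)^k H_k(y)^2/k!$ as in \eqref{F tau}; then \eqref{RN1} becomes
\[
\textbf{R}_N^1(x) = \sqrt{\tfrac{N}{2\pi}}\, e^{-Nx^2/(1+\tau_N)}\, F_N(y).
\]

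Differentiating in $x$ gives two contributions: from the Gaussian factor one obtains $-\tfrac{2N}{1+\tau_N}\,x\,F_N(y)$, and from $F_N'(y)$ times the chain-rule factor $\sqrt{N/(2\tau_N)}$ one obtains the remaining term. Inserting Lemma~\ref{Lem_Hermite} in the form
\[
F_N'(y) = \tfrac{4\tau_N y}{1+\tau_N}F_N(y) - \tfrac{4(\tau_N/2)^{N-1}}{1+\tau_N}\,\tfrac{H_{N-2}(y)H_{N-1}(y)}{(N-2)!},
\]
the first piece on the right, multiplied by $\sqrt{N/(2\tau_N)}$ and using $y = \sqrt{N/(2\tau_N)}\,x$, reduces to $+\tfrac{2N}{1+\tau_N}\,x\,F_N(y)$, which cancels the Gaussian derivative exactly. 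What survives is
\[
\tfrac{d}{dx}\textbf{R}_N^1(x) = -\sqrt{\tfrac{2}{\pi}}\,\frac{N(\tau_N/2)^{N-3/2}}{1+\tau_N}\,\frac{e^{-Nx^2/(1+\tau_N)}\,H_{N-2}(y)\,H_{N-1}(y)}{(N-2)!},
\]
once one uses the elementary identity $2(\tau_N/2)^{N-1}/\sqrt{\tau_N} = \sqrt{2}\,(\tau_N/2)^{N-3/2}$ to tidy the prefactor.

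Integrating from $0$ to $x$ and dividing by $E_{N,\tau_N}$ then yields the stated formula. The proof is essentially algebraic bookkeeping; the only substantive step is the cancellation of the $x\,F_N(y)$ terms, which is engineered precisely by the coefficient $4\tau_N/(1+\tau_N)$ appearing in Lemma~\ref{Lem_Hermite}. That coefficient is tuned to match the quadratic exponent in the Gaussian prefactor of $\textbf{R}_N^1$, which is why Lemma~\ref{Lem_Hermite} is the right tool here and why no further obstacle arises.
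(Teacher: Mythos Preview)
Your proof is correct and follows exactly the same route as the paper: express $\textbf{R}_N^1$ as the Gaussian prefactor times $F_N$, differentiate, invoke Lemma~\ref{Lem_Hermite} so that the $xF_N$ terms cancel, and integrate the surviving $H_{N-2}H_{N-1}$ term from $0$ to $x$. The only difference is that you spell out the cancellation and the prefactor bookkeeping in more detail than the paper does.
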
 
\begin{proof}
By \eqref{RN1}, the function $\textbf{R}_{N}^1$ is expressed in terms of the function $F_N$ in \eqref{F tau} as
$$
\textbf{R}_{N}^1(x)=\sqrt{\frac{N}{2\pi}} e^{ -\frac{N}{ 1+\tau_N } x^2 } F_N( \sqrt{\tfrac{N}{2\tau_N }} x).
$$
Differentiating the above and using Lemma~\ref{Lem_Hermite}, we obtain 
\begin{align*}
(\textbf{R}_{N}^1)'(x) = -\sqrt{\frac{2}{\pi}} \frac{(\tau_N/2)^{N-\frac32}}{1+\tau_N} \frac{N}{(N-2)!} e^{ -\frac{N}{ 1+\tau_N } x^2 } H_{N-2}( \sqrt{\tfrac{N}{2\tau_N }} x)H_{N-1}( \sqrt{\tfrac{N}{2\tau_N }} x).
\end{align*}
Integrating the above, we have 
\begin{align}
\begin{split}
\textbf{R}_{N}^1(x)=\textbf{R}_{N}^1(0)&-\sqrt{\frac{2}{\pi}} \frac{(\tau_N/2)^{N-\frac32}}{1+\tau_N} \frac{N}{(N-2)!}
\\
& \quad \times \int_{0}^{x} e^{ -\frac{N}{ 1+\tau_N } u^2 } H_{N-2}( \sqrt{\tfrac{N}{2\tau_N }} u)H_{N-1}( \sqrt{\tfrac{N}{2\tau_N }} u)\,du, 
\end{split}
\end{align}
which completes the proof. 
\end{proof}

We will need the following lemma on the boundedness of certain oscillatory integrals.
 
\begin{lem} \label{Lem_Oscill}
As $N \to \infty$, we have
\begin{equation} \label{Oscill lem1}
\int_{0}^{x} \frac{e^{\alpha^2 u^2/8} }{(4-u^2)^{\frac14}} \cos\Big\{ \tfrac{N+\alpha^2}{4}u\sqrt{4-u^2} -N\arccos( \tfrac{u}{2} ) -\tfrac{3}{2} \arcsin( \tfrac{u}{2}) \Big\} \,du=O(\frac1N)
\end{equation}
and 
\begin{equation} \label{Oscill lem2}
\int_{0}^x \frac{e^{\alpha^2 u^2/4}}{\sqrt{4-u^2}}\sin\Big\{ \tfrac{N+\alpha^2}{2} u\sqrt{4-u^2} -2N \arccos( \tfrac{u}{2} )-2 \arcsin( \tfrac{u}{2} ) \Big\} \,du=O(\frac1N)
\end{equation}
uniformly on compact subsets of $(-2,2).$
\end{lem}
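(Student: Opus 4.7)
\bigskip
\noindent\textbf{Proof plan.} Both estimates follow from a single integration by parts (non-stationary phase), once we verify that the phase derivatives are uniformly of order $N$ on any compact subset $K \subset (-2,2)$. Using $\tfrac{d}{du}[u\sqrt{4-u^2}]=(4-2u^2)/\sqrt{4-u^2}$, $\tfrac{d}{du}\arccos(u/2)=-1/\sqrt{4-u^2}$, and $\tfrac{d}{du}\arcsin(u/2)=1/\sqrt{4-u^2}$, a short computation gives the first phase derivative
\begin{equation}
\tfrac{d}{du}\!\Big[\tfrac{N+\alpha^2}{4}u\sqrt{4-u^2} - N\arccos(u/2) - \tfrac{3}{2}\arcsin(u/2)\Big] = \frac{N\sqrt{4-u^2}}{2} + \frac{\alpha^2(2-u^2)-3}{2\sqrt{4-u^2}},
\end{equation}
and analogously for the second phase $\tfrac{d}{du}[\,\cdots\,] = N\sqrt{4-u^2} + O(1/\sqrt{4-u^2})$. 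On $K$, $\sqrt{4-u^2}$ is bounded below by some $\delta_K>0$, so for large $N$ each phase derivative $\psi_N'$ satisfies $|\psi_N'(u)|\ge c_K N$ on $K$.

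Next, I would write $\cos\psi_N = (\sin\psi_N)'/\psi_N'$ (respectively $\sin\psi_N = -(\cos\psi_N)'/\psi_N'$), integrate by parts once, and obtain a boundary term plus a remainder
\begin{equation}
\Big[\frac{f(u)}{\psi_N'(u)}\sin\psi_N(u)\Big]_0^x - \int_0^x \frac{d}{du}\!\Big(\frac{f(u)}{\psi_N'(u)}\Big)\sin\psi_N(u)\,du,
\end{equation}
where $f$ denotes the corresponding smooth amplitude. The boundary term is $O(1/N)$ since $f$ is bounded on $K$ and $\psi_N'\gtrsim N$. For the remainder, the quotient rule yields $\tfrac{d}{du}(f/\psi_N') = f'/\psi_N' - f\,\psi_N''/(\psi_N')^2 = O(1/N)$ uniformly on $K$, since $\psi_N'' = O(N)$ while $(\psi_N')^2 \gtrsim N^2$. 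Integrating over the compact interval $[0,x]\subset K$ produces another $O(1/N)$ contribution.

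The only subtlety is checking that the amplitudes $e^{\alpha^2 u^2/8}/(4-u^2)^{1/4}$ and $e^{\alpha^2 u^2/4}/\sqrt{4-u^2}$, together with $\psi_N'$ and $\psi_N''$, have uniformly bounded $C^1$ norms on $K$, which is immediate because $\sqrt{4-u^2}$ is bounded above and below by positive constants there; uniformity of the $O(1/N)$ bound for $x\in K$ follows from these pointwise estimates. There is no genuine obstacle to overcome: no stationary points of $\psi_N$ occur on $K$, so a single integration by parts suffices and no van der Corput / stationary-phase analysis is needed.
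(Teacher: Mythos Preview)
Your proposal is correct and follows essentially the same approach as the paper: one integration by parts exploiting that the phase has no stationary points on compact subsets of $(-2,2)$. The only cosmetic difference is that the paper packages the $N$-independent oscillatory factors into a complex amplitude $a(u)$ and writes the integrand as $a(u)e^{iN\phi(u)}$ with $\phi'(u)=\tfrac12\sqrt{4-u^2}$, so that after pulling out $1/N$ the remaining estimate $\|a/\phi'\|_{L^\infty}+\|(a/\phi')'\|_{L^1}$ is manifestly $N$-independent; your version, keeping everything in a single $N$-dependent phase $\psi_N$ and invoking $\psi_N''=O(N)$, $(\psi_N')^2\gtrsim N^2$, arrives at the same bound.
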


\begin{proof}
We present the proof of the first assertion \eqref{Oscill lem1} as the other one \eqref{Oscill lem2} follows from similar computations. 
Let us write 
\begin{align*}
\phi(u)&:=\frac{u \sqrt{4-u^2} }{4} -\arccos(\tfrac{u}{2}),
\\
a(u)&:= \frac{e^{\alpha^2 u^2/8} }{(4-u^2)^{\frac14}} \exp\Big\{ i(\tfrac{\alpha^2}{4}u \sqrt{4-u^2}-\tfrac32 \arcsin(\tfrac{u}{2}) ) \Big\}.
\end{align*}
Then the desired asymptotic \eqref{Oscill lem1} is equivalent to 
\begin{equation}\label{Oscill lem1 var}
 \Re \int_0^x a(u)e^{iN \phi(u)}\,du= O(\frac1N),
\end{equation}
where the $O(\frac1N)$ term is uniform on compact subsets of $(-2,2).$

Since
$$
\phi'(u)=\frac{\sqrt{4-u^2}}{2},
$$
the function $\phi(u)$ has critical points only at $u=\pm 2.$ Thus we have that for $x \in (0,2),$ 
\begin{align*}
\int_0^x a(u)e^{iN \phi(u)}\,du&=\frac{1}{iN} \int_0^x \frac{a(u)}{\phi'(u)} \, \frac{d}{du}e^{iN \phi(u)}\,du
\\
&= \frac{1}{iN} \Big( \frac{a(u)}{\phi'(u)} e^{iN\phi(u)}\Big|_{u=0}^{u=x} - \int_0^x \frac{d}{du}\Big(\frac{a(u)}{\phi'(u)}\Big) \, e^{iN \phi(u)}\,du \Big) 
\end{align*}
since $\phi$ has no critical point on $[0,x]$.
Therefore we obtain 
\begin{equation}
\Big| \int_0^x a(u)e^{iN \phi(u)}\,du \Big| \le \frac1N \Big( \Big\| \frac{a}{\phi'} \Big\|_{ L^\infty[0,x] }+ \Big\| \Big(\frac{a}{\phi'}\Big)' \Big\|_{ L^1[0,x] } \Big),
\end{equation}
which leads to \eqref{Oscill lem1 var}. 
One can treat the other case $x\in(-2,0)$ in the same way.
\end{proof}

Let us recall the Plancherel-Rotach asymptotic formula (see e.g., \cite[Chapter 7]{MR1677884} or \cite[Theorem 5]{MR2364955}): for $\theta \in (-\pi/2,\pi/2)$,
\begin{equation} \label{PR in}
	H_N(\sqrt{2N} \sin \theta )\sim \sqrt{ \tfrac{2}{\cos \theta} } (2N)^{\frac{N}{2}} e^{ -(\frac12-\sin^2 \theta) N }
 \cos \Big\{ N[ -\tfrac{\pi}{2}+\theta+ \tfrac12 \sin 2\theta ]+\tfrac{\theta}{2} \Big\}.
\end{equation}
We are now ready to prove Theorem~\ref{Thm_density}.

\begin{proof}[Proof of Theorem~\ref{Thm_density}]
We prove the theorem by using the following steps:
\begin{itemize}
	\item for any $x \in (-2,2),$ 
	\begin{equation} \label{rhoN2 conv}
		\rho_N^2(x) \to 0 \quad \text{as } N \to \infty; 
	\end{equation}
 \item for any $x \in (-2,2),$ 
 \begin{align} \label{rhoN1 conv}
 	\rho_{N}^1(x) \to	\frac{1}{c(\alpha)} \frac{1}{2\alpha \sqrt{\pi}} \erf( \tfrac{\alpha}{2} \sqrt{4-x^2} ) \quad \text{as }N\to\infty. 
 \end{align}	
\end{itemize}
Combining \eqref{rhoN2 conv} and \eqref{rhoN1 conv}, as $N \to \infty,$
\[
\rho_N(x) \to \rho_{\alpha}^w(x), \quad \text{for }|x|<2. 
\]
Since both $\rho_N$ and $\rho_{\alpha}^w$ are probability density functions, we have 
\[
\lim_{ N \to \infty } \rho_N(x) = 0, \quad \text{for }|x|\ge 2,
\]
which proves Theorem~\ref{Thm_density}. 
Therefore it suffices to show \eqref{rhoN2 conv} and \eqref{rhoN1 conv}.

Let us first show \eqref{rhoN2 conv}. 
By Theorem~\ref{Thm_EN}, we have 
\begin{equation} \label{fact rho asy}
\frac{1}{E_{N,\tau_N} }\sqrt{\frac{2}{\pi}} \frac{(\tau_N/2)^{N-\frac32}}{1+\tau_N} \frac{N}{(N-2)!} \sim \frac{1}{c(\alpha)} \frac{2^{-N+1}}{\sqrt{\pi}} \frac{e^{-\alpha^2}}{ (N-2)! }.
\end{equation}
Then by \eqref{RN2},
\begin{align}
	\begin{split}
		\rho_N^2(x)&\sim \frac{e^{-\alpha^2}}{c(\alpha)} \frac{2^{-N}}{\sqrt{\pi}} \frac{e^{ -\frac{N}{2(1+\tau_N)} x^2 }}{ (N-2)! } H_{N-1}( \sqrt{\tfrac{N}{2\tau_N }} x ) 
	\int_{0}^{x} e^{ -\frac{N}{2(1+\tau_N)} u^2 } H_{N-2} ( \sqrt{\tfrac{N}{2\tau_N }} u )\,du.
	\end{split}
\end{align}
We now use \eqref{PR in} to obtain
\begin{align} \label{PR n-1}
\begin{split}
 H_{N-1}( \sqrt{\tfrac{N}{2\tau_N }} x ) &\sim -\frac{2}{ (4-x^2)^{\frac14} } (2N)^{ \frac{N-1}{2} } e^{ \frac{N}{2(1+\tau_N)} x^2 }\, e^{-\tfrac{N}{2}+\tfrac{\alpha^2 x^2}{8} }
\\
& \times \sin\Big\{ \tfrac{N+\alpha^2}{4}x\sqrt{4-x^2} -N\arccos( \tfrac{x}{2} ) -\tfrac{1}{2} \arcsin( \tfrac{x}{2}) \Big\}
\end{split}
\end{align}
and 
\begin{align} \label{PR n-2}
\begin{split}
 H_{N-2} ( \sqrt{\tfrac{N}{2\tau_N }} u ) &\sim -\frac{2}{ (4-u^2)^{\frac14} } (2N)^{ \frac{N-2}{2} }e^{ \frac{N}{2(1+\tau_N)} u^2 } \, e^{ -\tfrac{N}{2}+\tfrac{\alpha^2 u^2}{8} } 
\\
& \times \cos\Big\{ \tfrac{N+\alpha^2}{4}u\sqrt{4-u^2} -N\arccos( \tfrac{u}{2} ) -\tfrac{3}{2} \arcsin( \tfrac{u}{2}) \Big\}.
\end{split}
\end{align}
Combining \eqref{PR n-1} and \eqref{PR n-2} with Stirling's formula, we obtain 
\begin{align*}
	\begin{split}
	 \rho_N^2(x)&\sim \frac{e^{-\alpha^2}}{c(\alpha)} \frac{1}{\pi} \frac{e^{\alpha^2 x^2/8} }{(4-x^2)^{\frac14}} \sin\{ \tfrac{N+\alpha^2}{4}x\sqrt{4-x^2} -N\arccos( \tfrac{x}{2} ) -\tfrac{1}{2} \arcsin( \tfrac{x}{2}) \Big\}
		\\
		& \times \int_{0}^{x} \frac{e^{\alpha^2 u^2/8} }{(4-u^2)^{\frac14}} \cos\Big\{ \tfrac{N+\alpha^2}{4}u\sqrt{4-u^2} -N\arccos( \tfrac{u}{2} ) -\tfrac{3}{2} \arcsin( \tfrac{u}{2}) \Big\} \,du.
	\end{split}
\end{align*}
Now the claim \eqref{rhoN2 conv} follows from the first assertion \eqref{Oscill lem1} of Lemma~\ref{Lem_Oscill}.

Next, we show \eqref{rhoN1 conv}. For $x\in(-2,2)$, let us write 
\begin{equation}
	h_N(x):=\frac{2^{-N}}{(N-2)!}\int_{0}^{x} e^{ -\frac{N}{ 1+\tau_N } u^2 } H_{N-2}( \sqrt{\tfrac{N}{2\tau_N }} u)H_{N-1}( \sqrt{\tfrac{N}{2\tau_N }} u)\,du
\end{equation}
and 
\begin{equation}
	h(x):= \frac{1}{4\alpha}\, e^{\alpha^2} \, \Big( \erfc( \tfrac{\alpha}{2} \sqrt{4-x^2} )-\erfc(\alpha) \Big).
\end{equation}
Combining Lemma~\ref{Prop_rhoN1 rep} with \eqref{fact rho asy}, the claim \eqref{rhoN1 conv} is equivalent to the convergence
\begin{align}\label{Goal2 var2}
	h_N(x) \to h(x) \quad \text{as }N\to \infty. 
\end{align}

To show this, we first investigate the asymptotic behavior of ${h_N'(x)}/{h'(x)}$ as $N\to\infty$.
By differentiation, 
\begin{equation}
h_N'(x)=\frac{2^{-N}}{(N-2)!} e^{ -\frac{N}{ 1+\tau_N } x^2 } H_{N-2}( \sqrt{\tfrac{N}{2\tau_N }} x)H_{N-1}( \sqrt{\tfrac{N}{2\tau_N }} x)
\end{equation}
and 
\begin{equation}
h'(x)=\frac{1}{4\sqrt{\pi}} \frac{x}{\sqrt{4-x^2}} e^{\alpha^2 x^2/4} .
\end{equation}
Again we use \eqref{PR n-1} and \eqref{PR n-2} to derive
\begin{align}
	\frac{h_N'(x)}{h'(x)} \sim 1+\frac{2}{x}\sin\Big\{ \tfrac{N+\alpha^2}{2} x\sqrt{4-x^2} -2N \arccos( \tfrac{x}{2} )-2 \arcsin( \tfrac{x}{2} ) \Big\} .
\end{align}
Then \eqref{Goal2 var2} follows from the second assertion \eqref{Oscill lem2} of Lemma~\ref{Lem_Oscill}.
We finish the proof.
\end{proof}

\section{Variance of the number of real eigenvalues}\label{Section_Variance}

In this section, we prove Theorem~\ref{Thm_Variance}. 
Let us define
\begin{equation} \label{S_N}
	S_N(x,y):=S_{N}^{1}(x,y)+S_{N}^{2}(x,y),
\end{equation}
where 
\begin{align}
	\begin{split}
	S_{N}^{1}(x,y)&:=\frac{1}{\sqrt{2\pi}}e^{-\frac{x^2+y^2}{2(1+\tau_N)}} \sum_{k=0}^{N-2} \frac{(\tau_N/2)^k}{k!} H_k( \tfrac{x}{\sqrt{2\tau_N}} ) H_k( \tfrac{y}{\sqrt{2\tau_N}} ), \label{SN1}
		\end{split}
	\\
	\begin{split}
	S_{N}^{2}(x,y)&:=\frac{1}{\sqrt{2\pi}} \frac{(\tau_N/2)^{N-\frac32} }{1+\tau_N} \frac{1 }{(N-2)!} e^{-\frac{y^2}{2(1+\tau_N)} } H_{N-1} ( \tfrac{y}{\sqrt{2\tau_N}} ) 
	\\
	&\quad \times \int_{0}^x e^{ -\frac{u^2}{2(1+\tau_N)} } H_{N-2} ( \tfrac{u}{\sqrt{2\tau_N}} )\,du. \label{SN2}
		\end{split}
\end{align}
The function $S_N$ is used to form $2 \times 2$ matrix-valued kernel of the associated Pfaffian point process, see \cite{MR2430570}. 

By definition, we have
\begin{equation}
	S_N(x,x)=\frac{E_{N,\tau_N} }{\sqrt{N}}\rho_N( \tfrac{x}{\sqrt{N}} ), \qquad 	E_{N,\tau_N}=\int_{\R} S_N(x,x)\,dx.
\end{equation}
The variance $V_{N,\tau_N}$ is expressed in terms of one- and two-point correlation functions of the real eigenvalues, which in turn 
\begin{align} \label{VN}
	\begin{split}
		V_{N,\tau_N}=2E_{N,\tau_N} -2\int_{\R^2} S_N(x,y)S_N(y,x)\,dx\,dy,
	\end{split}
\end{align}
see \cite{forrester2007eigenvalue}. 

By Theorem~\ref{Thm_EN}, we have $E_{N,\tau_N} \sim N c(\alpha).$ Observe also that the functions $r(\alpha)$ and $c(\alpha)$ are related as
$$
r(\alpha)= 2-2 \, \frac{c(\sqrt{2}\alpha)}{c(\alpha)}.
$$
Then by \eqref{VN}, it suffices to show that 
\begin{align}\label{VN Goal1}
\begin{split}
\lim_{ N \to \infty } \frac{1}{N}\int_{\R^2} S_N(x,y) S_N(y,x)\,dx\,dy = c (\sqrt{2}\alpha).
\end{split}
\end{align}
 
Notice that the integral in \eqref{SN2} is the same as the one in \eqref{RN2}. 
Thus we can use \eqref{PR n-1}, \eqref{PR n-2}, and Lemma~\ref{Lem_Oscill} to obtain that for any $x,y \in (-2\sqrt{N},2\sqrt{N})$, 
$$
S_N^2(x,y)=o(\sqrt{N} ), \qquad S_N^2(y,x)=o(\sqrt{N} ).
$$
Therefore the term $S_N^2$ does not contribute to the leading order asymptotic of \eqref{VN Goal1}, namely
\begin{equation}\label{Sint S1int}
 \frac{1}{N}\int_{\R^2} S_N(x,y) S_N(y,x)\,dx\,dy \sim \frac{1}{N}\int_{\R^2} S_N^1(x,y)^2\,dx\,dy .
\end{equation}
By \eqref{I nu} and \eqref{c(alpha)}, all we need to show is the convergence
\begin{equation} \label{VN Goal11}
\lim_{ N \to \infty } \frac{1}{N}\int_{\R^2} S_N^1(x,y)^2\,dx\,dy 
=\sum_{k=0}^\infty (-1)^k\frac{(2k-1)!!}{k!(k+1)!} \alpha^{2k}.
\end{equation}

The overall strategy of the proof of \eqref{VN Goal11} is similar to that in Section~\ref{Section_expectation} albeit the computations are fairly more complicated. For the reader's convenience, let us summarize the steps of the proof as follows: 
\begin{itemize}
 \item we first obtain an expression of $\frac{1}{N}\int_{\R^2} S_N^1(x,y)^2\,dx\,dy$ which is given in terms of a \emph{double summation} of hypergeometric functions (Lemma~\ref{Lem_SN1 int});
 \smallskip
 \item we decompose the double summation in the previous step into finite number of single summations (see \eqref{INn}, \eqref{RN1N decomp}) and obtain a \emph{double contour} integral representation of each of the single summation (Lemma~\ref{Lem_INn power});
 \smallskip
 \item we compute the power series expansion of each of the single summation by the residue calculus (Lemma~\ref{Lem_aNkn residue});
 \smallskip
 \item using a combinatorial identity (Lemma~\ref{Lem_Com iden}) we compile all the contributions from each of the single summation in the large-$N$ limit. 
\end{itemize}
See also Remark~\ref{Rmk_residue} below for a motivation of this approach using the residue calculus. 

We begin with the following lemma.

\begin{lem} \label{Lem_SN1 int}
We have 
$$
\frac 1N \int_{\R^2} S_{N}^1(x,y)^2\,dx\,dy=\RN{1}_N(\alpha)+\RN{2}_N(\alpha),
$$
where 
\begin{align*}
	\RN{1}_N(\alpha)&:=\frac1N	\frac{\pi}{2} \frac{1+\tau_N}{1-\tau_N}\sum_{l,m=0}^{N/2-1} \frac{ {}_2\textbf{\textup{F}}_1(l-m+\tfrac{1}{2},m-l+\tfrac12 ;-l-m+\tfrac{1}{2};-\tfrac{\tau_N}{1-\tau_N})^2 }{ (2l)! (2m)!},
	\\
	\RN{2}_N(\alpha)&:=\frac1N\frac{\pi}{2} \frac{1+\tau_N}{1-\tau_N}\sum_{l,m=1}^{N/2-1} \frac{ {}_2\textbf{\textup{F}}_1(l-m+\tfrac{1}{2},m-l+\tfrac12 ;-l-m+\tfrac{3}{2};-\tfrac{\tau_N}{1-\tau_N})^2 }{ (2l-1)! (2m-1)!}.
\end{align*}

\end{lem}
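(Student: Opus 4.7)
The plan is to square $S_N^1(x,y)$, reduce the resulting $(x,y)$ double integral to the square of a one-dimensional Hermite--Gaussian integral, evaluate that integral in closed form, and recast it into the hypergeometric form stated in the lemma. Squaring \eqref{SN1} and applying Fubini yields
\[
\int_{\R^2} S_N^1(x,y)^2\,dx\,dy \;=\; \frac{1}{2\pi}\sum_{k,k'=0}^{N-2}\frac{(\tau_N/2)^{k+k'}}{k!\,k'!}\,J_{k,k'}^2,
\]
where $J_{k,k'} := \int_{\R} e^{-x^2/(1+\tau_N)}\,H_k(x/\sqrt{2\tau_N})\,H_{k'}(x/\sqrt{2\tau_N})\,dx$. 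Parity of the Hermite polynomials forces $J_{k,k'}=0$ whenever $k+k'$ is odd, so the sum splits into an even--even piece ($k=2l$, $k'=2m$) and an odd--odd piece ($k=2l-1$, $k'=2m-1$); these are to be identified with $\RN{1}_N(\alpha)$ and $\RN{2}_N(\alpha)$, respectively.

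To evaluate $J_{k,k'}$ I would use the Hermite generating function $\sum_n H_n(x) t^n/n! = e^{2xt-t^2}$: multiplying by $s^k t^{k'}/(k!\,k'!)$, summing, and performing the Gaussian integral in $x$ yields
\[
\sum_{k,k'\ge 0}\frac{s^k t^{k'}}{k!\,k'!}\,J_{k,k'} \;=\; \sqrt{\pi(1+\tau_N)}\,\exp\!\left(\tfrac{1-\tau_N}{2\tau_N}(s^2+t^2)+\tfrac{1+\tau_N}{\tau_N}\,st\right).
\]
Extracting the coefficient of $s^{2l}t^{2m}$ (resp.\ of $s^{2l-1}t^{2m-1}$) using $(2n)!=4^n n!(\tfrac12)_n$ (resp.\ $(2n+1)!=4^n n!(\tfrac32)_n$) expresses $J_{2l,2m}$ (resp.\ $J_{2l-1,2m-1}$) as a terminating ${}_2F_1(-l,-m;\tfrac12;\,\cdot\,)$ (resp.\ ${}_2F_1(1-l,1-m;\tfrac32;\,\cdot\,)$) evaluated at $(1+\tau_N)^2/(1-\tau_N)^2$, multiplied by explicit Pochhammer prefactors.

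The final ingredient is a chain of hypergeometric transformations. Setting $w:=-\tau_N/(1-\tau_N)$, one has the key identity $(1+\tau_N)^2/(1-\tau_N)^2 = 1 - 4w(1-w)$, so that the argument is $1-\zeta$ with $\zeta=4w(1-w)$. Applying Kummer's connection formula for ${}_2F_1(a,b;c;1-\zeta)$---whose ``second branch'' vanishes because it contains a factor $1/[\Gamma(-l)\Gamma(-m)]$ in the even case (respectively $1/[\Gamma(1-l)\Gamma(1-m)]$ in the odd case), annihilated by the poles of $\Gamma$ at non-positive integers---leaves a single ${}_2F_1$ at argument $\zeta$; a preliminary Euler transformation is required in the odd case to make Kummer's vanishing branch align correctly. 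The quadratic transformation ${}_2F_1(a,b;a+b+\tfrac12;4z(1-z)) = {}_2F_1(2a,2b;a+b+\tfrac12;z)$ (DLMF 15.8.15) then converts the argument from $\zeta$ to $w$, and a final Euler transformation turns the top parameters into $l{-}m{+}\tfrac12,\,m{-}l{+}\tfrac12$. Passing from ${}_2F_1$ to the regularized ${}_2\textbf{F}_1$ via $\Gamma(\tfrac12-n) = (-1)^n\sqrt{\pi}\,4^n n!/(2n)!$ and collecting all Pochhammer ratios yields
\[
(\tau_N/2)^{2(l+m)}\,J_{2l,2m}^2 \;=\; \pi^2\,\tfrac{1+\tau_N}{1-\tau_N}\,\bigl[\,{}_2\textbf{F}_1\!\bigl(l{-}m{+}\tfrac12,\,m{-}l{+}\tfrac12;\,{-}l{-}m{+}\tfrac12;\,w\bigr)\bigr]^2
\]
and its odd analogue with $-l-m+\tfrac12$ replaced by $-l-m+\tfrac32$; substituting into the double sum gives $\RN{1}_N(\alpha)+\RN{2}_N(\alpha)$.

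The main obstacle is the bookkeeping in this hypergeometric chain: the prefactors from the generating-function coefficient extraction must combine with those from Kummer's formula, DLMF 15.8.15, and the Euler transformations to produce exactly the factor $\tfrac{\pi}{2}\tfrac{1+\tau_N}{1-\tau_N}$ outside the sum. The vanishing of Kummer's second branch via poles of $\Gamma$ at non-positive integers is essential, as is the identity $(1+\tau_N)^2/(1-\tau_N)^2 = 1 - 4w(1-w)$, which is what makes DLMF 15.8.15 applicable with argument $w=-\tau_N/(1-\tau_N)$.
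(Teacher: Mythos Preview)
Your approach is correct, but it takes a considerably longer route than the paper's. The paper's proof is a two-liner: it quotes the table integral \cite[(7.374-5)]{gradshteyn2014table}, which for $j+k$ even gives directly
\[
\int_\R e^{-\frac{x^2}{1+\tau_N}} H_j\!\Big(\tfrac{x}{\sqrt{2\tau_N}}\Big) H_k\!\Big(\tfrac{x}{\sqrt{2\tau_N}}\Big)\,dx
\;\propto\;\Gamma\!\big(\tfrac{j+k+1}{2}\big)\,{}_2F_1\!\big({-}j,{-}k;\tfrac{1-j-k}{2};-\tfrac{\tau_N}{1-\tau_N}\big),
\]
already at the right argument $w=-\tau_N/(1-\tau_N)$ and with the right bottom parameter; a single Euler transformation then turns the top parameters into $\tfrac{j-k+1}{2},\tfrac{k-j+1}{2}$, and the reflection formula converts to ${}_2\textbf{F}_1$. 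Your generating-function evaluation of $J_{k,k'}$ is of course valid (and is essentially a proof of that table entry), but it lands you at a ${}_2F_1$ with bottom parameter $\tfrac12$ or $\tfrac32$ and argument $(1+\tau_N)^2/(1-\tau_N)^2$, so you then need the whole Kummer\,/\,quadratic\,/\,Euler chain to migrate both the bottom parameter and the argument. One small inaccuracy in your sketch: in the odd case Kummer's second branch already vanishes via $1/(\Gamma(1-l)\Gamma(1-m))$ without any preparation; the extra Euler transformation is needed not ``to make Kummer's vanishing branch align'' but \emph{after} Kummer, because the surviving term ${}_2F_1(1-l,1-m;\tfrac12-l-m;\zeta)$ does not match the pattern $c=a+b+\tfrac12$ required by DLMF~15.8.15. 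Applying Euler once turns it into ${}_2F_1(\tfrac12-l,\tfrac12-m;\tfrac32-l-m;\zeta)$, where the quadratic transformation does apply. What your approach buys is self-containment (no table lookup); what the paper's buys is brevity and uniform treatment of even and odd $j,k$ without case distinction.
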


\begin{proof}
By \eqref{SN1}, we have
$$
S_{N,1}(x,y)^2=\frac{1}{2\pi}e^{-\frac{x^2+y^2}{1+\tau_N}} \sum_{j,k=0}^{N-2} \frac{(\tau_N/2)^{j+k}}{j!k!} H_j( \tfrac{x}{\sqrt{2\tau_N}} )H_k( \tfrac{x}{\sqrt{2\tau_N}} ) H_j( \tfrac{y}{\sqrt{2\tau_N}} ) H_k( \tfrac{y}{\sqrt{2\tau_N}} ).
$$
Observe here that for $j+k$ odd, we have 
$$
\int_\R e^{-\frac{x^2}{1+\tau_N}} H_j( \tfrac{x}{\sqrt{2\tau_N}} )H_k( \tfrac{x}{\sqrt{2\tau_N}} ) \,dx=0.
$$
For $j+k$ even, we use the formula (7.374-5) in \cite{gradshteyn2014table} to derive 
\begin{align}
	\begin{split}
&\quad \int_\R e^{-\frac{x^2}{1+\tau_N}} H_j( \tfrac{x}{\sqrt{2\tau_N}} )H_k( \tfrac{x}{\sqrt{2\tau_N}} ) \,dx=\sqrt{2\tau_N} \int_\R e^{-\frac{2\tau_N }{1+\tau_N}u^2} H_j( u )H_k( u ) \,du
\\
&=\sqrt{2\tau_N} \, 2^{ \frac{j+k-1}{2} } ( \tfrac{\tau_N}{1+\tau_N} )^{ -\frac{j+k+1}{2} } (\tfrac{1-\tau_N}{1+\tau_N} )^{ \frac{j+k}{2} } \Gamma(\tfrac{j+k+1}{2}) {}_2F_1(-j,-k;\tfrac{1-j-k}{2};-\tfrac{\tau_N}{1-\tau_N})
\\
&=( \tfrac{1+\tau_N}{1-\tau_N} )^{\frac12} ( \tfrac{\tau_N}{2} )^{ -\frac{j+k}{2} } \Gamma(\tfrac{j+k+1}{2}) {}_2F_1(\tfrac{j-k+1}{2},\tfrac{k-j+1}{2};\tfrac{1-j-k}{2};-\tfrac{\tau_N}{1-\tau_N}).
	\end{split}
\end{align}
Here the last identity follows from Euler's transformation (see e.g., \cite[Eq.(15.8.1)]{olver2010nist})
\begin{equation} \label{Euler trans}
{}_2F_1(a,b;c;z)=(1-z)^{c-a-b} {}_2F_1(c-a,c-b;c;\tfrac{z}{z-1}).
\end{equation} 
Then by \eqref{Gamma reflection}, we obtain
\begin{equation} \label{SN1 int}
\int_{\R^2} S_{N,1}(x,y)^2\,dx\,dy= \frac{\pi}{2} \frac{1+\tau_N}{1-\tau_N}\sum_{\substack{j+k: even \\ 0 \le j,k \le N-2}} \frac{ {}_2\textbf{F}_1(\tfrac{j-k+1}{2},\tfrac{k-j+1}{2};\tfrac{1-j-k}{2};-\tfrac{\tau_N}{1-\tau_N})^2 }{ j! k!},
\end{equation}
which completes the proof. 
\end{proof}

\begin{rmk}\label{Rmk_residue}
Let us pause here to briefly explain the reason we derive the asymptotics of \eqref{SN1 int} using the residue calculus rather than direct computations. For this, note that 
\begin{equation} \label{SN1 int v2}
\frac{1}{N}\int_{\R^2} S_{N,1}(x,y)^2\,dx\,dy \sim \frac{\pi}{\alpha^2}\sum_{\substack{j+k: even \\ 0 \le j,k \le N-2}} \frac{ {}_2\textbf{F}_1(\tfrac{j-k+1}{2},\tfrac{k-j+1}{2};\tfrac{1-j-k}{2};-\tfrac{\tau_N}{1-\tau_N})^2 }{ j! k!}.
\end{equation}
As a function of $\alpha$, the right-hand side of the equation \eqref{SN1 int v2} is indeed an even polynomial of degree $4N-8.$ More precisely, we have 
\begin{equation} \label{SN1 int poly}
 \frac{\pi}{\alpha^2}\sum_{j,k=0}^{N-2} \frac{ {}_2\textbf{F}_1(\tfrac{j-k+1}{2},\tfrac{k-j+1}{2};\tfrac{1-j-k}{2};-\tfrac{\tau_N}{1-\tau_N})^2 }{ j! k!} = \sum_{l=0}^{2N-4} b_{N,l} \, \alpha^{2l},
\end{equation}
where 
\begin{equation} \label{bNl}
b_{N,l}= \frac{(-1)^l}{\pi N^{l+1}} \sum_{\substack{j+k: even \\ 0 \le j,k \le N-2}} \sum_{\substack{n+m =l \\ 0 \le n \le j \\ 0 \le m \le k}} \frac{\Gamma( \tfrac{k-j+1}{2}+n)\Gamma( \tfrac{j-k+1}{2}+m)}{ \Gamma(k-j+1+n)\Gamma(j-k+1+m) } \binom{j}{n} \binom{k}{m}.
\end{equation}
The identity \eqref{SN1 int poly} follows from well-known properties of hypergeometric function which include Pfaff's transform \eqref{Pfaff trans}, a linear transform \cite[Eq.(15.8.7)]{olver2010nist} and an evaluation in terms of polynomials \cite[Eq.(15.2.4)]{olver2010nist} as well as basic properties of Gamma function such as the reflection formula \eqref{Gamma reflection}. We leave this verification to interested readers. 
Using \eqref{SN1 int poly}, one may prove Theorem~\ref{Thm_Variance} by analyzing the large-$N$ limit of \eqref{bNl}. Nevertheless, it requires some non-trivial combinatorial identities to evaluate the multiple summations. On the other hand, the residue calculus we will perform plays a role in simplifying the multiple summations in \eqref{bNl}, which makes the evaluation easier. The meaning of this will become clear below. 
\end{rmk}

We shall now focus on the asymptotic analysis for $\RN{1}_N$ as the other case follows along the same line. 
By \eqref{tau wH}, we have 
$$
	\RN{1}_N(\alpha) \sim \frac{\pi}{\alpha^2 }\sum_{l,m=0}^{N/2-1} \frac{ {}_2\textbf{F}_1(l-m+\tfrac{1}{2},m-l+\tfrac12 ;-l-m+\tfrac{1}{2};-\tfrac{N}{\alpha^2}+1)^2 }{ (2l)! (2m)!}.
$$
For each non-negative integer $n$, let us write 
\begin{equation}\label{INn}
\RN{1}_{N,n}(\alpha):=\frac{\pi}{\alpha^2 }\sum_{l=n}^{N/2-1} \frac{ {}_2\textbf{F}_1(n+\tfrac{1}{2},-n+\tfrac12 ;-2l+n+\tfrac{1}{2};-\tfrac{N}{\alpha^2}+1)^2 }{ (2l)! (2l-2n)!}.
\end{equation}
Then by letting $l=m-n$, one can see that $\RN{1}_N$ can be asymptotically decomposed as follows: 
\begin{equation}\label{RN1N decomp}
\RN{1}_N(\alpha)\sim \RN{1}_{N,0}(\alpha)+2 \sum_{n=1}^{N/2-1} \RN{1}_{N,n}(\alpha). 
\end{equation}
In the sequel, we shall use the shorthand notation 
$$\underset{\zeta,\eta=1}{\text{Res}}[f(\zeta,\eta)]:= \underset{\eta=1}{\text{Res}}[ \underset{\zeta=1}{\text{Res}}[f(\zeta,\eta)]].$$
Then we obtain the power series expansion of $\RN{1}_{N,n}$ in terms of a double contour integral. 
\begin{lem}\label{Lem_INn power}
We have 
\begin{equation}\label{RN1Nn}
\RN{1}_{N,n}(\alpha)=\frac{1}{2}\sum_{k=0}^\infty a_{N,k}^n \, \alpha^{2k}, 
\end{equation}
where 
\begin{align}\label{a_{N,k}^n}
\begin{split}
	a_{N,k}^n&= \frac{ (-1)^{n} }{N^{k+1}\,2^{k-1}} \sum_{m=0}^{k} \frac{(2n+2m-1)!! \, (-1)^{k-m} }{m!\,(k-m)!\, (2n-2k+2m-1)!!}
	\\
	& \times \underset{\zeta,\eta=1}{\textup{Res}} \, \Big[ \frac{ \zeta^{-m-2n-1} \,\eta^{-k+m-1+2n} }{(\zeta-1)^{N-1-m-2n} (\eta-1)^{N-1-k+m}} \frac{ 1 }{ 1-(\zeta-1)^2 (\eta-1)^2 } \Big].
\end{split}
\end{align}
\end{lem}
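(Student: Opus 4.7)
The plan is to lift the residue-calculus strategy of Lemma~\ref{Prop_gN} to a double contour. First I would apply the integral representation \eqref{2F1 int rep} separately to each ${}_2\mathbf{F}_1$ factor in the summand of \eqref{INn}. With $a=n+\tfrac12$, $b=-n+\tfrac12$, $c=-2l+n+\tfrac12$, the parameter $c-b=-2l+2n$ is a non-positive integer, so $\Gamma(c-b)\sin(\pi(c-b))$ is a removable singularity resolved by the reflection identity \eqref{Gamma reflection} exactly as in Lemma~\ref{Prop_gN}. Each Pochhammer integral then collapses to a residue at $\zeta=1$ (respectively $\eta=1$) of $\zeta^{-n-\tfrac12}(1-\zeta)^{-2l+2n-1}(1-\zeta x)^{-n-\tfrac12}$, with an $l$-independent constant absorbing $(2l-2n)!$, $(2n-1)!!/\sqrt{\pi}$, and signs $(-1)^n$ coming from $1/\Gamma(-n+\tfrac12)$. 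Squaring and substituting $x=1-N/\alpha^2$ yields a double residue at $\zeta=\eta=1$.

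Next I would expand the analytic piece $\zeta^{-n-\tfrac12}(1-\zeta x)^{-n-\tfrac12}=[\zeta(1-\zeta x)]^{-n-\tfrac12}$ about $\zeta=1$ using the factorization $\zeta(1-\zeta x)=(N/\alpha^2)(1+y)[1-(\alpha^2/N-1)y]$ with $y=\zeta-1$; this yields a double binomial series whose $(\zeta-1)^m$-coefficients carry the double-factorial $(2n+2m-1)!!/[2^m(2n-1)!!\,m!]$, which is the mechanism that injects $(2n+2m-1)!!$ into the final formula. Performing the analogous expansion for $\eta$ leaves the $l$-dependence entirely in $[(1-\zeta)(1-\eta)]^{-2(l-n)-1}$. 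Summing over $l\in\{n,\dots,N/2-1\}$ via the finite-geometric identity (in the spirit of the $\sum_{k=0}^{N/2-1}(1-\zeta)^{-2k-1}$ manipulation in the proof of Lemma~\ref{Prop_gN})
\begin{equation*}
\sum_{j=0}^{M}w^{-2j-1}=\frac{w^{-2M-1}}{1-w^2}-\frac{w}{1-w^2},\qquad M=\tfrac{N}{2}-n-1,\quad w=(\zeta-1)(\eta-1),
\end{equation*}
I would retain only the first, singular summand---the second is regular at $\zeta=\eta=1$ and drops out of the residue. This supplies the factor $\tfrac{1}{1-[(\zeta-1)(\eta-1)]^2}$ and, after combining with the Taylor powers, the pole orders $(\zeta-1)^{-(N-1-m-2n)}(\eta-1)^{-(N-1-k+m)}$. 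The sign flips $(1-\zeta)\mapsto-(\zeta-1)$ absorb into a global $(-1)^n$ using that $N$ is even.

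Extracting the coefficient of $\alpha^{2k}$ then organizes the expansion into the single sum $\sum_{m=0}^{k}$: the $\alpha$-scaling $\alpha^{-2}(\alpha^2/N)^{2n+1}\alpha^{2(s_1+s_2)}\sim\alpha^{2k}/N^{k+1}$ fixes the subsidiary $(1-\alpha^2/N)^{k_i}$-expansion indices by $s_1+s_2=k-2n$ and also produces the overall $1/N^{k+1}$, while the Taylor index $m$ (identified with the $\zeta$-order $k_1$) runs freely; a Vandermonde-type collapse of the inner double sum produces the compact ratio $(2n+2m-1)!!/(2n-2k+2m-1)!!$, and values with $k_2=k-2n-m<0$ yield non-negative $(\eta-1)$-exponents so that the $\eta$-residue vanishes, allowing the $m$-range to be presented uniformly as $0\le m\le k$. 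The remaining constants $\tfrac{(-1)^n}{N^{k+1}2^{k-1}}$ and $(-1)^{k-m}$ fall out of combining the $(2n-1)!!^2$ from $\Gamma(-n+\tfrac12)^{-2}$, the signs from the sin-$\Gamma$ cancellations, and the $(\alpha^2/N)^{2n+1}$ from the extracted leading behavior. The principal technical obstacle is exactly this bookkeeping---pushing $\Gamma$-values, double factorials, and $(-1)^n$-signs cleanly through the sin-$\Gamma$ reflection, the Pochhammer-to-residue reduction, and the binomial expansion in $\alpha^2/N$---so that the compact form of \eqref{a_{N,k}^n} emerges automatically rather than as an unwieldy multi-index expression. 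A secondary concern is justifying that the discarded regular part of the finite geometric sum truly contributes nothing to the double residue at $\zeta=\eta=1$, which is standard.
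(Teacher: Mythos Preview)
Your overall architecture---double residue, geometric summation in $l$, binomial expansion in $\alpha^2/N$---is exactly right, but the ``squaring'' step hides a genuine error. The summand of $\RN{1}_{N,n}$ carries the \emph{asymmetric} denominator $(2l)!(2l-2n)!$, not $((2l-2n)!)^2$. Your representation $\zeta^{-n-1/2}(1-\zeta)^{-2l+2n-1}(1-\zeta x)^{-n-1/2}$ (taking $b=-n+\tfrac12$ in \eqref{2F1 int rep}) absorbs one copy of $(2l-2n)!$; using the \emph{same} integrand for $\eta$ absorbs a second $(2l-2n)!$, and the stray ratio $(2l-2n)!/(2l)!$ is never accounted for---it destroys the pure geometric series in $w=(\zeta-1)(\eta-1)$. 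The paper fixes this by exploiting ${}_2F_1(a,b;c)={}_2F_1(b,a;c)$: the $\eta$-factor is represented with $b=n+\tfrac12$, giving integrand $\eta^{\,n-1/2}(1-\eta)^{-2l-1}(1-\eta x)^{\,n-1/2}$, which absorbs $(2l)!$. The prefactor then becomes $1/(\Gamma(-n+\tfrac12)\Gamma(n+\tfrac12))=(-1)^n/\pi$ via reflection, rather than your $((2n-1)!!)^2/(4^n\pi)$.

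This asymmetry is not cosmetic: it is precisely what produces the target coefficients. With $(1-\eta x)^{\,n-1/2}$ the binomial expansion yields $(-1)^{m'}(2n-1)!!/[(2n-2m'-1)!!\,2^{m'}m'!]$, so that after setting $m'=k-m$ the ratio $(2n+2m-1)!!/(2n-2k+2m-1)!!$ appears \emph{directly} from the Cauchy product of the two (different) binomial series---no ``Vandermonde-type collapse'' is needed or available. In your symmetric version both factors contribute $(2n+2\cdot-1)!!$ in the \emph{numerator}, and the claimed collapse to the stated ratio does not hold (already at $m=1$ the coefficients disagree). Likewise the asymmetric pole orders $(\zeta-1)^{-(N-1-m-2n)}(\eta-1)^{-(N-1-k+m)}$ and the integer exponents $\zeta^{-m-2n-1}\eta^{-k+m-1+2n}$ in \eqref{a_{N,k}^n} come from combining $\zeta^{-n-1/2}\eta^{\,n-1/2}$ with the expansion powers; a symmetric $\zeta^{-n-1/2}\eta^{-n-1/2}$ cannot reproduce them.
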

\begin{proof}
We first express each summand in \eqref{INn} in terms of the residue of an elementary function. 
By the integral representation \eqref{2F1 int rep} of the hypergeometric function, for $x<0$, we have
\begin{align*}
&\quad \frac{{}_2\textbf{F}_1(l-m+\tfrac{1}{2},m-l+\tfrac12 ;-l-m+\tfrac{1}{2};x)}{(2m)!} 
\\
&=-\tfrac{1}{\Gamma(m-l+\tfrac12)} \, \underset{\zeta=1}{\text{Res}} \, \Big[ \zeta^{m-l-\frac12} (1-\zeta)^{-2m-1} (1-\zeta x)^{m-l-\frac12}\Big].
\end{align*}
This identity together with \eqref{Gamma reflection} leads to 
\begin{align*}
&\quad \frac{ {}_2\textbf{F}_1(n+\tfrac{1}{2},-n+\tfrac12 ;-2l+n+\tfrac{1}{2};x)^2 }{ (2l)! (2l-2n)!}
\\
&=\tfrac{(-1)^{n}}{\pi} \, \underset{\zeta,\eta=1}{\text{Res}} \, \Big[ \zeta^{-n-\frac12}\eta^{n-\frac12}\, (1-\zeta)^{-2l+2n-1}(1-\eta)^{-2l-1}\, (1-\zeta x)^{-n-\frac12} (1-\eta x)^{n-\frac12}\Big].
\end{align*}
We now simplify the sum of $(\zeta-1)^{-2l+2n-1} (\eta-1)^{-2l-1}$ and expand the factor $(1-\zeta x)^{-n-\frac12} 	(1-\eta x)^{n-\frac12}$ around $\alpha=0$ ($x=1-\frac{N}{\alpha^2}$).
As a rational function of $\zeta$ and $\eta$,
\begin{align*}
	\sum_{l=n}^{N/2-1} (\zeta-1)^{-2l+2n-1} (\eta-1)^{-2l-1}
\end{align*}
simplifies to 
\begin{align*}
\frac{(\zeta-1)^{2n-N+1}(\eta-1)^{-N+1}-(\zeta-1)(\eta-1)^{1-2n}}{1-(\zeta-1)^2 (\eta-1)^2}.
\end{align*}
Also for $x=1-\frac{N}{\alpha^2}$, the binomial expansions of $(1-\zeta x)^{-n-\frac12}$ and $(1-\eta x)^{n-\frac12}$ give that
\begin{align*}
	&\quad 	(1-\zeta x)^{-n-\frac12} 	(1-\eta x)^{n-\frac12}
		\\
	&=\sum_{k=0}^{\infty} \frac{ \alpha^{2k+2} }{N^{k+1}\,2^{k}} \sum_{m=0}^{k} \frac{(\zeta-1)^m}{ \zeta^{m+\frac12+n} } \frac{ (\eta-1)^{k-m} }{ \eta^{k-m+\frac12-n} } \frac{(2n+2m-1)!! \, (-1)^{k-m} }{m!\,(k-m)!\, (2n-2k+2m-1)!!}
\end{align*}
for $|\zeta-1| < |\zeta|\frac{N}{\alpha^2}$ and $|\eta-1| < |\eta|\frac{N}{\alpha^2}$. 
Combining all of the above equations with \eqref{INn}, the proof is complete. 
\end{proof}

Next, we compute the residue in \eqref{a_{N,k}^n}.

\begin{lem}\label{Lem_aNkn residue}
We have 
\begin{align}
	\begin{split} \label{a_{N,k}^n 2nd}
	a_{N,k}^n
	&= \frac{ (-1)^{k} }{2^{k-1}} \sum_{m=0}^{k} \frac{(2n+2m-1)!! \, (2k-2m-1-2n)!! }{m!\,(k-m)!\, (m+2n)!\, (k-m-2n)!}
	\\
	& \times \frac1{N^{k+1}}\sum_{l=0}^{N'} \frac{(N-2-2l)!(N-2-2l-2n)!}{ (N-2-2l-m-2n)!(N-2-2l-k+m)!},
		\end{split}
\end{align}
where $N':=\lfloor \frac{N-m}{2} \rfloor -n-1 .$
\end{lem}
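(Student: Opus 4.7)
The plan is to expand the rational factor $\frac{1}{1-(\zeta-1)^2(\eta-1)^2}$ as a geometric series in a punctured bidisk around $(\zeta,\eta)=(1,1)$,
\begin{equation*}
	\frac{1}{1-(\zeta-1)^2(\eta-1)^2} = \sum_{l=0}^\infty (\zeta-1)^{2l}(\eta-1)^{2l},
\end{equation*}
and substitute this into the double residue in \eqref{a_{N,k}^n}. The $\zeta$- and $\eta$-dependences then decouple, so the double residue becomes a sum indexed by $l\ge 0$ of products of two independent single residues. Uniform convergence of the geometric series on small circles around $1$ justifies the interchange of summation and residue.

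Each single residue is evaluated by the Taylor expansion of $\zeta^{-a}$ around $\zeta=1$: for instance,
\begin{equation*}
	\underset{\zeta=1}{\textup{Res}}\Big[ \frac{\zeta^{-m-2n-1}(\zeta-1)^{2l}}{(\zeta-1)^{N-1-m-2n}} \Big] = \binom{-m-2n-1}{N-2-m-2n-2l},
\end{equation*}
with the analogous formula on the $\eta$-side yielding $\binom{-k+m-1+2n}{N-2-k+m-2l}$. Because the pole order in $\zeta-1$ is $N-1-m-2n-2l$, this residue vanishes automatically once $l>(N-2-m-2n)/2$, and a short case analysis for the two parities of $N-m$ shows this truncation is exactly $l\le N'$. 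Applying the polynomial identity $\binom{-a}{j}=(-1)^j\binom{a+j-1}{j}$ to both binomials and then using the symmetry $\binom{n}{j}=\binom{n}{n-j}$ rewrites the product of residues as
\begin{equation*}
	(-1)^k\,\frac{(N-2-2l)!\,(N-2-2l-2n)!}{(m+2n)!\,(k-m-2n)!\,(N-2-2l-m-2n)!\,(N-2-2l-k+m)!},
\end{equation*}
where the sign $(-1)^k$ arises from combining $(-1)^{N-m-2n}$ and $(-1)^{N-k+m}$.

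The last step reconciles the remaining sign and converts the ``inverse'' double factorial. Multiplying by the prefactor $(-1)^n(-1)^{k-m}$ in \eqref{a_{N,k}^n} and invoking the identity
\begin{equation*}
	(-(2j+1))!! = \frac{(-1)^j}{(2j-1)!!}, \qquad j\ge 0,
\end{equation*}
with $j=k-m-n$ replaces $1/(2n-2k+2m-1)!!$ by $(-1)^{k-m-n}(2k-2m-2n-1)!!$, collapsing the total sign to $(-1)^{n+(k-m)+k+(k-m-n)}=(-1)^k$, in agreement with \eqref{a_{N,k}^n 2nd}. The main subtlety will be this sign bookkeeping and handling the boundary case $k-m-2n<0$: in that range the target summand vanishes through $(k-m-2n)!=\infty$, while on the residue side $\binom{N-2-2l-2n}{N-2-k+m-2l}=0$ throughout $0\le l\le N'$ (since the lower index then exceeds the upper index), so the two expressions agree without further argument.
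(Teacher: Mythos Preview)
Your proposal is correct and follows essentially the same approach as the paper: expand the geometric series $\frac{1}{1-(\zeta-1)^2(\eta-1)^2}$, decouple the double residue into a sum over $l$ of products of single residues, and evaluate each via the Taylor expansion of $\zeta^{-a}$ around $\zeta=1$. Your write-up is in fact more explicit than the paper's on two points the paper leaves implicit: the sign reconciliation via the double-factorial identity $(-(2j+1))!!=(-1)^j/(2j-1)!!$, and the boundary case $k-m-2n<0$, where both sides vanish.
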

\begin{proof}
Using the expansions 
\begin{align*}
&\zeta^{-m-2n-1}=\sum_{l=0}^{\infty} (-1)^l \frac{(m+2n+l)!}{(m+2n)! \, l!} (\zeta-1)^l, 
\\
&\frac{ 1 }{ 1-(\zeta-1)^2 (\eta-1)^2 }= \sum_{l=0}^{\infty} (\zeta-1)^{2l}(\eta-1)^{2l}
\end{align*}
around $\zeta=1$ and $\eta=1$,
we have
\begin{align*}
	&\quad \underset{\zeta=1}{\text{Res}} \, \Big[ \frac{ \zeta^{-m-2n-1} }{(\zeta-1)^{N-1-m-2n} } \frac{ 1 }{ 1-(\zeta-1)^2 (\eta-1)^2 } \Big]
	\\
	&=\frac{(-1)^{m}}{(m+2n)!}\sum_{l=0}^{N'} \frac{(N-2-2l)!}{ (N-2-m-2n-2l)!} (\eta-1)^{ 2l }.
\end{align*}
Therefore the residue in \eqref{a_{N,k}^n} is computed as 
\begin{align*}
	&\quad \underset{\zeta,\eta=1}{\text{Res}} \, \Big[ \frac{ \zeta^{-m-2n-1} \,\eta^{-k+m-1+2n} }{(\zeta-1)^{N-1-m-2n} (\eta-1)^{N-1-k+m}} \frac{ 1 }{ 1-(\zeta-1)^2 (\eta-1)^2 } \Big]
	\\
	&= \frac{(-1)^{m}}{(m+2n)!}\sum_{l=0}^{N'} \frac{(N-2-2l)!}{ (N-2-m-2n-2l)!} \, \underset{\eta=1}{\text{Res}} \Big[ \frac{ \eta^{-k+m-1+2n} }{(\eta-1)^{N-1-k+m-2l}} \Big].
	\\
&= \frac{(-1)^{k}}{(m+2n)!(k-m-2n)!}\sum_{l=0}^{N'} \frac{(N-2-2l)!(N-2-2l-2n)!}{ (N-2-2l-m-2n)!(N-2-2l-k+m)!},
\end{align*}
which completes the proof. 
\end{proof}

To combine each of the residues in \eqref{RN1N decomp}, we will need the following combinatorial identity.

\begin{lem} \label{Lem_Com iden}
For any non-negative integer $k$, we have 
\begin{equation} \label{Com iden}
\binom{2k}{k} = \sum_{m=0}^{k} \binom{k}{m} \sum_{ n=-\lfloor k/2 \rfloor }^{ \lfloor k/2 \rfloor } \frac{(2n+2m-1)!! \, (2k-2m-2n-1)!! }{\,(m+2n)!\, (k-m-2n)!}. 
\end{equation}
\end{lem}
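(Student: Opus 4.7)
The plan is to reduce the right-hand side of \eqref{Com iden} to the central binomial coefficient via a single change of variable followed by Vandermonde's identity. The key observation is that the double-factorial factors $(2n+2m-1)!!$ and $(2k-2m-2n-1)!!$ depend on $n$ and $m$ only through the combination $j = n + m$. Accordingly I would first substitute $j = n + m$ in the inner sum, rewriting the summand as
\[
\frac{(2j-1)!!\,(2(k-j)-1)!!}{(2j-m)!\,(k-2j+m)!}.
\]
With the convention $1/(-r)! = 0$ for $r > 0$, the nonvanishing range is $0 \le 2j - m \le k$, and I would briefly verify that the prescribed window $n \in [-\lfloor k/2 \rfloor, \lfloor k/2 \rfloor]$ already contains every such term, which follows from $\lfloor m/2 \rfloor, \lfloor (k-m)/2 \rfloor \le \lfloor k/2 \rfloor$ for $0 \le m \le k$.

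Next I would swap the order of summation so that $j$ is outside and $m$ is inside, and set $l = 2j - m$. Using $\binom{k}{m} = k!/(m!(k-m)!)$, the inner sum becomes $\frac{1}{k!}\sum_l \binom{k}{l}\binom{k}{2j-l}$, which by Vandermonde's identity equals $\binom{2k}{2j}/k!$. Thus the right-hand side of \eqref{Com iden} collapses to
\[
\frac{1}{k!}\sum_{j=0}^{k} \binom{2k}{2j}\,(2j-1)!!\,(2(k-j)-1)!!.
\]

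For the final step I would rewrite $(2j-1)!! = (2j)!/(2^j j!)$ and likewise $(2(k-j)-1)!! = (2(k-j))!/(2^{k-j}(k-j)!)$; these cancel against the factorials in $\binom{2k}{2j}$ and leave the summand $\frac{(2k)!}{2^k\,j!(k-j)!}$. The binomial theorem $\sum_j \binom{k}{j} = 2^k$ then immediately gives the total $(2k)!/k!$, so dividing by the outer $k!$ yields $(2k)!/(k!)^2 = \binom{2k}{k}$, as desired. I do not anticipate any serious obstacle: once the reparametrization $j = n + m$ is in place, Vandermonde's convolution and one application of the binomial theorem do all the work, and the only mildly delicate point is confirming that no nonzero term is truncated by the stated range of $n$.
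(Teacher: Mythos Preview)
Your proof is correct and takes a genuinely different route from the paper's. Both arguments begin with essentially the same reparametrization: writing the summand through $j=m+n$ (the paper does this implicitly when it rewrites everything in terms of $\binom{k}{m+n}\binom{2m+2n}{m}\binom{2k-2m-2n}{k-m}$). The divergence is in how the resulting identity is verified. The paper proves
\[
2^k\binom{2k}{k}=\sum_{m}\sideset{}{'}\sum_{n}\binom{k}{m+n}\binom{2m+2n}{m}\binom{2k-2m-2n}{k-m}
\]
by a double-counting argument: the left side counts selections of $k$ balls from $2k$ together with a two-coloring, while the right side counts the same selections by first choosing $m+n$ of the $k$ pairs, then picking the black balls from those pairs and the red balls from the rest. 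Your approach instead stays entirely algebraic: after the substitution $j=m+n$ you apply Vandermonde's convolution $\sum_l\binom{k}{l}\binom{k}{2j-l}=\binom{2k}{2j}$ to collapse the $m$-sum, and then the identity $(2j-1)!!=(2j)!/(2^j j!)$ together with the binomial theorem finishes the job. Your argument is cleaner and more mechanical---it avoids constructing and justifying a bijection---while the paper's proof supplies a combinatorial interpretation of the identity. The one point worth tightening in a final write-up is the treatment of the summation ranges when you swap $\sum_m$ and $\sum_j$: you should state explicitly that the sum may be extended over all integers (using $1/(-r)!=0$ and $\binom{k}{m}=0$ outside $0\le m\le k$) before invoking Vandermonde, but this is routine.
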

\begin{proof}
Note that the inner summation of \eqref{Com iden} can be written as 
\begin{align*}
\frac{1}{2^k}\sideset{}{'}\sum_n \frac{(2m+2n)! \, (2k-2m-2n)! }{(m+n)!\,(k-m-n)! \,(m+2n)!\, (k-m-2n)!},
\end{align*}
where the summation $\sum'_n$ is taken over all $n$ such that $m+2n \ge 0$ and $k-m-2n \ge 0$. 
Then \eqref{Com iden} is equivalent to 
$$
2^k \binom{2k}{k}= \sum_{m=0}^{k} \sideset{}{'}\sum_n \binom{k}{m+n}\binom{2m+2n}{m} \binom{2k-2m-2n}{k-m}.
$$
This identity easily follows from the elementary combinatorics using the double-counting method. 
To be more precise, the left-hand side corresponds to the number of cases where $k$ out of $2k$ balls are chosen, and each is painted black or red (say). 
On the other hand, the right-hand side can be interpreted as the total number of cases in which $m +n$ pairs are first selected from the $k$ pairs of $2$ balls each, and $m$ balls are selected out of the selected $m+n$ pairs of balls and painted black, while $k-m$ balls are selected out of the rest $k-m-n$ pairs with $2k-2m-2n$ balls and painted red. This completes the proof. 

\end{proof}

Now we are ready to prove Theorem~\ref{Thm_Variance}.

\begin{proof}[Proof of Theorem~\ref{Thm_Variance}]
Recall that we aim to prove \eqref{VN Goal11}. 
By Lemma~\ref{Lem_SN1 int}, it is enough to show that 
\begin{equation}\label{VN Goal2}
	\lim_{ N \to \infty } \RN{1}_N(\alpha) = 	\lim_{ N \to \infty } \RN{2}_N(\alpha)= \frac12 \sum_{k=0}^\infty a_k \alpha^{2k}, \qquad a_k=(-1)^k\frac{(2k-1)!!}{k!(k+1)!} . 
\end{equation}
We shall present the proof of \eqref{VN Goal2} for $\RN{1}_N.$ The other one for $\RN{2}_N$ is left to the reader as an exercise.

Due to Lemma~\ref{Lem_INn power} and the decomposition \eqref{RN1N decomp}, the convergence \eqref{VN Goal2} is equivalent to
\begin{equation}\label{VN Goal4} 
a_k=a_{k}^0+2 \sum_{ n=1 }^{ \infty } a_k^{n}, \qquad a_{k}^n:=\lim_{ N \to \infty } a_{N,k}^n.
\end{equation}
To show \eqref{VN Goal4}, we first claim that 
\begin{equation} \label{a_{k}^n}
a_{k}^n= \frac{ (-1)^k }{2^{k}} \frac{1}{k+1} \sum_{m=0}^{k} \frac{(2n+2m-1)!! \, (2k-2m-1-2n)!! }{m!\,(k-m)!\, (m+2n)!\, (k-m-2n)!}.
\end{equation}
This follows from Lemma~\ref{Lem_aNkn residue} and Riemann sum approximation. More precisely, as $N \to \infty$,
the last factor in \eqref{a_{N,k}^n 2nd} has the following asymptotic behavior 
\begin{align*}
&\quad \frac{1}{N^{k+1}} \sum_{l=0}^{N'} \frac{(N-2l-2)!(N-2l-2n-2)!}{ (N-2l-m-2n-2)!(N-2l-k+m-2)!} 
\\
&= \frac{1}{N} \sum_{l=0}^{N'} \Big(1-\frac{2l}{N} + O\big(\frac{1}{N}\big)\Big)^k \sim \frac12 \int_0^1 (1-t)^k\,dt=\frac{1}{2(k+1)},
\end{align*}
which leads to \eqref{a_{k}^n}. 

Now observe that by \eqref{a_{N,k}^n 2nd}, we have 
$$a_{N,k}^n=a_{k}^n=0 \quad \text{if}\quad n> \lfloor k/2 \rfloor.$$
Then it follows from \eqref{a_{k}^n} and Lemma~\ref{Lem_Com iden} that 
\begin{align*}
a_{k}^0+2 \sum_{ n=1 }^{ \infty } a_k^{n}&= a_{k}^0+2 \sum_{ n=1 }^{ \lfloor k/2 \rfloor } a_k^{n}= (-1)^k\frac{(2k-1)!!}{k!(k+1)!}. 
\end{align*}
This completes the proof. 
\end{proof}

\bibliographystyle{abbrv}
\bibliography{RMTbib}
\end{document}